\newcommand{\tr}{\operatorname{tr}}
\newcommand{\dd}{\operatorname{d}\!}
\newcommand{\diver}{\operatorname{div}}
\newcommand{\curl}{\operatorname{curl}}
\newcommand{\n}{\bm{n}}
\newcommand{\e}{\bm{e}}
\newcommand{\bend}{\bm{b}}
\newcommand{\vv}{\bm{v}}
\newcommand{\frameca}{(\e_x,\e_y,\e_z)}
\newcommand{\twist}{w}
\newcommand{\sgn}{\operatorname{sgn}}
\newcommand{\arcsinh}{\operatorname{arcsinh}}
\newcommand{\etal}{\eta_{\mathrm{L}}}
\newcommand{\etau}{\eta_{\mathrm{U}}}
\newcommand{\lambdamax}{\lambda_{\mathrm{max}}}
\newcommand{\tc}{t_{\mathrm{c}}}
\newcommand{\tch}{\widehat{t}_{\mathrm{c}}}
\newcommand{\tast}{t_{\lambda}^\ast}
\newcommand{\wu}{\widehat{u}}
\newcommand{\etaLp}{\eta_\mathrm{L}^+}
\newcommand{\etaLm}{\eta_\mathrm{L}^-}
\newcommand{\class}{\mathcal{C}}
\newcommand{\adm}{\mathcal{A}_\lambda(r_0)}
\newcommand{\admh}{\widehat{\mathcal{A}}_\lambda(r_0)}
\newcommand{\arginf}{\operatorname{arg\,inf}}
\newcommand{\argmin}{\operatorname{arg\,min}}
\newtheorem{theorem}{Theorem}
\newtheorem{definition}{Definition}
\newtheorem{lemma}{Lemma}
\newtheorem{proposition}{Proposition}
\theoremstyle{definition}
\newtheorem{remark}{Remark}
\begin{document}
\latintext

\title{Singular Damped Twist Waves in Chromonic Liquid Crystals}
\author{Silvia Paparini}
\email{silvia.paparini@unipd.it}
\affiliation{Department of Mathematics, University of Padua, Via Trieste 63, 35121 Padova, Italy}
\author{Epifanio G. Virga}
\email{eg.virga@unipv.it}
\affiliation{Department of Mathematics, University of Pavia, Via Ferrata 5, 27100 Pavia, Italy}
\begin{abstract}
	\emph{Chromonics} are special classes of nematic liquid crystals, for which a \emph{quartic} elastic theory seems to be more appropriate than the classical quadratic Oseen-Frank theory. The relaxation dynamics of twist director profiles are known to develop a shock wave in finite time in the inviscid case, where dissipation is neglected. This paper studies the dissipative case. We give a sufficient criterion for the formation of shocks in the presence of dissipation and we estimate the critical time at which these singularities develop. Both criterion and estimate depend on the initial director profile. We put our theory to the test on a class of initial \emph{kink} profiles and we show how accurate our estimates are by comparing them to the outcomes of numerical solutions.
\end{abstract}
\date{\today}

\maketitle

\section{Introduction}\label{sec:intro}
\emph{Chromonic} liquid crystals (CLCs) are special \emph{lyotropic} phases that arise in colloidal solutions (mostly aqueous) when the concentration of the solute is sufficiently high or the temperature is sufficiently low.  These materials, which have potential applications in life sciences \cite{shiyanovskii:real-time,mushenheim:dynamic,mushenheim:using,zhou:living}, are constituted by plank-shaped molecules that arrange themselves in stacks when dissolved in water. For sufficiently high concentrations or low temperatures, the constituting stacks give rise to an ordered phase, either \emph{nematic} or \emph{columnar} \cite{lydon:chromonic_1998,lydon:handbook,lydon:chromonic_2010,lydon:chromonic,dierking:novel}. Here, we shall only be concerned with the nematic phase, in which the elongated  microscopic constituents of the material display  a certain degree of \emph{orientational} order described by the nematic director $\n$, while their centres of mass remain disordered. 

The classical quadratic elastic  theory of Oseen~\cite{oseen:theory} and Frank~\cite{frank:theory} was proved to have potentially paradoxical consequences when applied to free-boundary problems for chromonics, such as those involving the equilibrium shape of CLC droplets surrounded by their isotropic phase \cite{paparini:paradoxes}. To remedy this state of affairs, a \emph{quartic} elastic theory was proposed for CLCs in \cite{paparini:elastic}, which alters the Oseen-Frank energy density by the addition of a \emph{single} quartic term in the \emph{twist} measure of nematic distortion.

Preliminary possible experimental confirmations of the validity of this theory are presented in \cite{paparini:spiralling,ciuchi:inversion,paparini:what}. These were all concerned with equilibrium configurations of the nematic director. In \cite{paparini:singular}, we began studying the relaxation dynamics of these phases, under the simplifying assumption that the fluid remains quiescent; we showed that the quartic structure of the elastic energy is responsible for the formation of shock waves in a \emph{twist} director mode. Only the \emph{inviscid} case was considered in \cite{paparini:singular}, with all dissipative effects neglected. In this paper, which is the natural sequel of \cite{paparini:singular}, \emph{rotational} viscosity is reinstated. 

We wonder whether dissipation may systematically grant regularity to the solutions of the dynamical equations, thus avoiding shock wave formation. To tackle this issue, after recalling from \cite{paparini:singular} our mechanical theory for CLCs in Section~\ref{sec:recap}, we develop in Section~\ref{sec:method} a method that extends to dissipative systems the approach purported in \cite{manfrin:note}, which has been a source of inspiration for our previous work. In Section~\ref{sec:pre-breakdown}, we illustrate the properties exhibited by the solutions to the global Cauchy problem studied in this paper as long as they maintain the same regularity possessed at the initial time. In Section~\ref{sec:critical_time_estimates}, which is the heart of the paper, we establish an estimate for the critical time at which at shock emerges out of a regular solution (when it does). This estimate is articulated in a sufficient criterion for the occurrence of a singularity and a selection rule that delimits the tolerated amount of dissipation. The theory is applied in Section~\ref{sec:applications} to a class of initial director profiles exhibiting a \emph{kink}. Our theoretical estimates for the critical time are compared with the outcomes of a number of numerical solutions of the dynamical problem. Finally, in Section~\ref{sec:conclusion}, we collect the conclusions of this study and comment briefly on possible ways to extend it. The paper is closed by an Appendix containing auxiliary results needed in the main body, but marginal to its development.

\section{Theoretical Recap}\label{sec:recap}
Here, to make this paper self-contained, we recall from \cite{paparini:singular} the main features of our mechanical theory for CLCs; the reader is referred to \cite{paparini:singular} for all missing details.

Letting $\n$ be the unit vector field representing the local orientation of the molecular aggregates constituting CLCs, we write the elastic free-energy density in the form
\begin{equation}
	\label{eq:quartic_free_energy_density}
	W(\n,\nabla\n):=\frac{1}{2}(K_{11}-K_{24})S^2+\frac{1}{2}(K_{22}-K_{24})T^2+ \frac{1}{2}K_{23}B^{2}+2K_{24}q^2 + \frac14K_{22}a^2T^4,
\end{equation}
where $S:=\diver\n$ is the \emph{splay}, $T:=\n\cdot\curl\n$ is the \emph{twist}, $B^2:=\bend\cdot\bend$ is the square modulus of the \emph{bend} vector $\bend:=\n\times\curl\n$, and, in accordance with a reinterpretation of the basic elastic modes proposed in \cite{selinger:interpretation} (see also \cite{selinger:director}), $q>0$ is the \emph{octupolar splay}  \cite{pedrini:liquid} defined by the following equation,
\begin{equation}
	\label{eq:identity}
	2q^2=\tr(\nabla\n)^2+\frac12T^2-\frac12S^2.
\end{equation}
In \eqref{eq:quartic_free_energy_density}, $a$ is a characteristic \emph{length}, which here shall be treated as a phenomenological parameter, and the $K$'s are elastic moduli that must satisfy the following inequalities for $W$ to be bounded below,
\begin{subequations}\label{eq:new_inequalities}
	\begin{eqnarray}
		&K_{11}\geqq K_{24}\geqq0,\label{eq:new_inequalities_1}\\
		&K_{24}\geqq K_{22}\geqq0, \label{eq:new_inequalities_2}\\
		&K_{33}\geqq0.\label{eq:new_inequalities_3}
	\end{eqnarray}
\end{subequations}

These are the form appropriate to this theory of \emph{Ericksen's inequalities}, which were formulated for the classical Oseen-Frank theory \cite{ericksen:liquid}. If they hold, as assumed here, then $W$ attains its minimum for
\begin{equation}
	\label{eq:double_twist}
	S = 0, \quad T = \pm T_0, \quad B = 0, \quad q = 0\quad\text{with}\quad T_0:=\frac{1}{a}\sqrt{\frac{K_{24}-K_{22}}{K_{22}}},
\end{equation}
a distortion state known as \emph{double twist} \cite{selinger:director}.

\emph{Twist} waves in nematic liquid crystals were first studied by Ericksen in \cite{ericksen:twist}. They are special solutions to the hydrodynamic equations under the assumption that  the flow velocity $\vv$ vanishes: this implies that the motion of the director induces no \emph{backflow}. The governing  one-dimensional wave equation derived  in \cite{ericksen:twist} presumes that no extrinsic body forces or couples act on the system, and that the material occupies the whole space, assumptions that will be retained in this paper.

Under these assumptions, as shown in \cite{paparini:singular}, the Cauchy stress tensor $\mathbf{T}$ takes the following form,
\begin{equation}
	\label{eq:momentum_balance_tw}
	\mathbf{T}=-p\mathbf{I}-\left(\nabla\n\right)^{\mathsf{T}}\frac{\partial W}{\partial\nabla\n}+\mu_2\partial_t\n\otimes\n+\mu_3\n\otimes\partial_t\n,
\end{equation}
where $p$ is the unknown pressure associated with the constraint of incompressibility and
\begin{equation}
	\label{eq:mu_2_3}
	\mu_2:=\frac12(\gamma_2-\gamma_1)\quad\text{and}\quad\mu_3:=\frac12(\gamma_2+\gamma_1)
\end{equation}
are \emph{viscosity} coefficients. In particular, the \emph{rotational} viscosity $\gamma_1>0$ will play here a central role.

In the absence of body forces and couples, the governing balance equations reduce to 
\begin{subequations}
	\label{eq:balance_equations_reduced}
	\begin{gather}
		\diver\mathbf{T}=\bm{0},\label{eq:balance_momentum_reduced}\\
		\sigma\partial_{tt}\n+\gamma_1\partial_t\n+\frac{\partial W}{\partial\n}-\diver\left(\frac{\partial W}{\partial\nabla \n}\right)=\mu\n,
	\end{gather}
\end{subequations}
where $\sigma>0$ is the (density) of molecular moment of inertia and $\mu$ is a Lagrange multiplier associated with the unimodularity constraint, $\n\cdot\n\equiv1$.

By representing $\n$ in a Cartesian frame $\frameca$ as
\begin{equation}
	\label{eq:director_tw}
	\n = \cos \twist\e_y + \sin \twist\e_z, 
\end{equation}
where $\twist = \twist(t, x)$ denotes the \emph{twist} angle for  $(t, x) \in [0, \infty) \times \mathbb{R}$, we easily see that $T$ and $q$ are the only (related) distortion measures that do not vanish, 
\begin{equation}
	\label{eq:distortion_measures}
	S=0,\quad T=-\partial_x\twist,\quad B=0,\quad 2q^2=\frac12T^2.
\end{equation}
Equations \eqref{eq:balance_equations_reduced} are  then equivalent to
\begin{subequations}
	\begin{numcases}{}
	\sigma \partial_{tt}\twist-K_{22}\left(1+3a^2(\partial_x\twist)^2\right)\partial_{xx}\twist=-\gamma_1\partial_t\twist,\label{eq:balance_torques_wave}\\
\mu=-\sigma (\partial_t\twist)^2+K_{22}(\partial_{x}\twist)^2\left(1+a^2(\partial_x\twist)^2\right),\label{eq:lagrange_multiplier}\\
	p=-K_{22}(\partial_{x}\twist)^2(1+a^2(\partial_x\twist)^2)+p_0(t),	\label{eq:pressure}
\end{numcases}
\end{subequations}
where $p_0(t)$ is an arbitrary function of time. While equations \eqref{eq:pressure} and \eqref{eq:lagrange_multiplier} determine the Lagrange multipliers associated with the constraints enforced in the theory, \eqref{eq:balance_torques_wave} is the genuine evolution equation of the system, whose solutions thus provide a complete solution to the governing equations.

The following sections will be devoted to the analysis of equation \eqref{eq:balance_torques_wave}. The molecular inertia $\sigma$ is responsible for its hyperbolic character: equation \eqref{eq:balance_torques_wave} becomes parabolic if $\sigma$ vanishes. 

We found it convenient to rescale lengths to $\sqrt{3}a$ and times to 
\begin{equation}
	\label{eq:time_characteristic}
	\tau:=\sqrt{3}a\sqrt{\frac{\sigma}{K_{22}}}.
\end{equation}
Keeping the original names for the rescaled variables $(t,x)$, we write \eqref{eq:balance_torques_wave} as 
\begin{equation}
	\label{eq:wave_equation_characteristic}
	\partial_{tt}\twist-Q^2(\partial_x\twist)\partial_{xx}\twist=-\lambda \partial_{t}\twist \quad\text{for}\quad (t,x)\in[0,+\infty)\times\mathbb{R},
\end{equation}
where 
\begin{equation}
	\label{eq:kernel}
	Q(\xi):=\sqrt{1+\xi^2}
\end{equation}
is the dimensionless  \emph{wave velocity} and $\lambda$ is a dimensionless \emph{damping} parameter defined as
\begin{equation}
	\label{eq:lambda_def}
	\lambda:=a\gamma_1\sqrt{\frac{3}{\sigma K_{22}}}.
\end{equation}

In our scaling, the molecular inertia $\sigma$ affects both $\lambda$ and $\tau$, making the former larger and the latter smaller when it is decreased, so that the director evolution becomes overdamped and (correspondingly) its hyperbolic character applies to an ever shrinking time scale.

In \cite{paparini:singular}, equation \eqref{eq:balance_torques_wave} was studied in the \emph{inviscid} case, where $\lambda=0$; in the following, it will studied in the general dissipative case, where $\lambda>0$.

\section{Mathematical Methodology}\label{sec:method}
In this section, we study the following global Cauchy problem for the function $w(t,x)$,
\begin{subequations}
\label{eq:wave_system} 
\begin{numcases} {}
\partial_{tt}w-Q^2(\partial_{x}w)\partial_{xx}w=-\lambda \partial_{t}w & for $(t,x)\in[0,+\infty)\times\mathbb{R}$,\label{eq:wave_eq}\\
w(0,x)=w_0(x) & for $x\in\mathbb{R}$,\label{eq:wave_initial_configuration}\\
\partial_{t}w(0,x)=0 & for $x\in\mathbb{R}$,\label{eq:wave_initial_velocity}
\end{numcases}
\end{subequations}
where $\lambda>0$ is defined as in \eqref{eq:lambda_def} and $w_0$ is a function of class $\mathcal{C}^2$ such that its derivative $w_0'$ is bounded, but not constant. The function $Q$ in \eqref{eq:kernel} 
is strictly positive for all $\xi \in \mathbb{R}$, ensuring that the system is \emph{strictly hyperbolic}. However, according to a common definition  (see, e.g., \cite[p.\,15]{majda:compressible}), the system is not \emph{genuinely} nonlinear since $Q'(0)=0$. Additionally,
\begin{equation}
\label{eq:weak_non_linearity}
\xi Q'(\xi)>0 \quad \hbox{ for every } \xi\neq0,
\end{equation}
which rules out the possibility of using the method developed in \cite{maccamy:existence}.
\begin{remark}\label{rmk:aim}
Our aim is to show that there there is a whole range of values of $\lambda>0$ for which a \emph{shock} can still arise from a regular solution of the system \eqref{eq:wave_system} within a finite time $\tast$. The time $\tast$ is expected to depend only  on the initial data and the parameter $\lambda$; the shock should manifest itself as a discontinuity in the derivatives  $\partial_{x}w$ and $\partial_{t}w$ across a singular position $x=x^\ast(t)$ traveling in time, while the second derivatives become infinite.
\end{remark}
In the following, we lay out the method employed in this paper to extend the results arrived at in \cite{paparini:singular}. As will be clearer below, the present method is \emph{not} just an adaptation of the one that applied to the inviscid case.

\subsection{Problem Setup}
We start by setting a different symbol for first-order derivatives of the twist angle $\twist$,
\begin{equation}
\label{eq:first_order_unknowns}
u_1(t,x):=\partial_{x}w(t,x), \quad u_2(t,x):=\partial_{t}w(t,x),
\end{equation}
and transform \eqref{eq:wave_system} into the following first-order system, 
\begin{subequations}
\label{eq:first_order_system} 
\begin{numcases} {}
\partial_t u_{1}=\partial_x u_{2},\\
\partial_t u_{2}-Q^2(u_1)\partial_x u_{1}=-\lambda u_2.
\end{numcases}
\end{subequations}
We diagonalize this system by introducing the \emph{Riemann functions} $r$ and $\ell$ defined as
\begin{subequations}
\label{eq:riemann_system} 
\begin{numcases} {}
r(t,x):=u_2(t,x)-L(u_1(t,x)),\\
\ell(t,x):=u_2(t,x)+L(u_1(t,x)),
\end{numcases}
\end{subequations}
where $L$ is a mapping that satisfies
\begin{equation}
\label{eq:L_prime_u1}
L'(u_1)=Q(u_1).
\end{equation}
The system \eqref{eq:first_order_system} can then be rewritten as
\begin{subequations}
\label{eq:riemann_system_Q}
\begin{numcases} {}
\partial_{t}r+Q\partial_{x}r=-\frac{\lambda}{2}(r+\ell) & $(t,x)\in[0,t_\ast)\times\mathbb{R}$, \label{eq:r_syst_Q}\\
\partial_{t}\ell-Q\partial_{x}\ell=-\frac{\lambda}{2}(r+\ell) & $(t,x)\in[0,t_\ast)\times\mathbb{R}$, \label{eq:l_syst_Q}
\end{numcases}
\end{subequations}
where $[0,t_\ast)$ is  the \emph{maximal interval} of classical existence. We need to express $Q$ as a function of $r$ and $\ell$ for \eqref{eq:riemann_system_Q} to acquire the desired form; by setting $L(0)=0$, with no prejudice for the validity of \eqref{eq:L_prime_u1}, the combination of \eqref{eq:riemann_system} and \eqref{eq:kernel} yields the relation
\begin{equation}
\label{eq:L_u_1}
\ell(t,x)-r(t,x)=2L(u_1)=2\int_0^{u_1}Q(\xi)\dd\xi=u_1\sqrt{1+u_1^2}+\hbox{arcsinh}\,u_1.
\end{equation}
By inverting \eqref{eq:L_u_1}, $u_1$ can be expressed in terms of the difference 
\begin{equation}
\label{eq:eta_def}
\eta:=r-\ell
\end{equation}
as
\begin{equation}
\label{eq:u_1_rl}
u_1=\wu_1(\eta):=L^{-1}\left(-\frac{1}{2}\eta\right).
\end{equation}
The graph of $\wu_1(\eta)$ is represented in Fig.~\ref{fig:u_1}.
\begin{figure}[h] 
\begin{subfigure}[c]{0.48\linewidth}
	\centering
	\includegraphics[width=.65\linewidth]{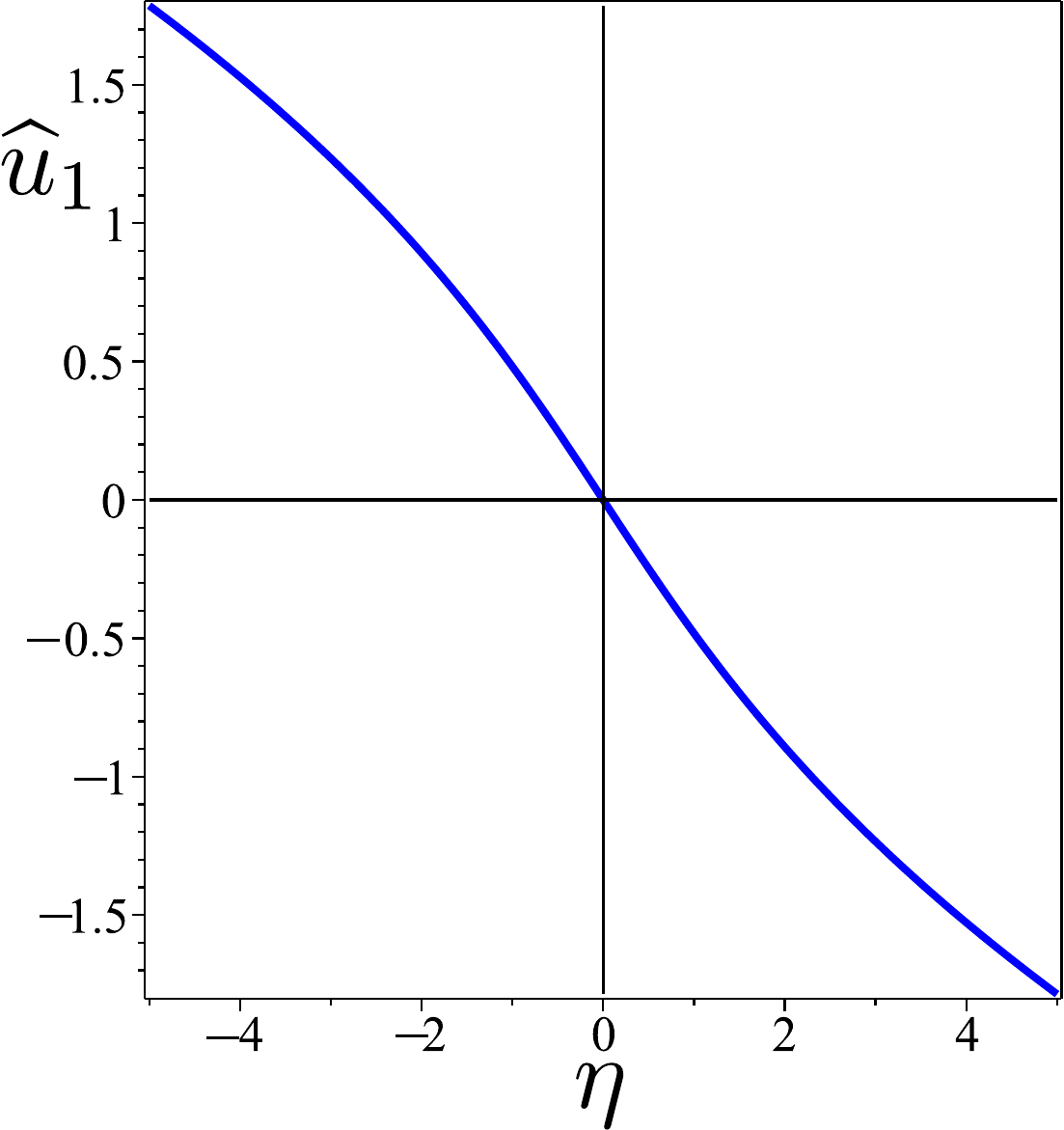}
	\caption{Graphical solution  of \eqref{eq:L_u_1} for $u_1$ depicted as a function of $\eta$.}
	\label{fig:u_1}
	\end{subfigure}
\quad
	\begin{subfigure}[c]{0.45\linewidth}
	\centering
	\includegraphics[width=0.6\linewidth]{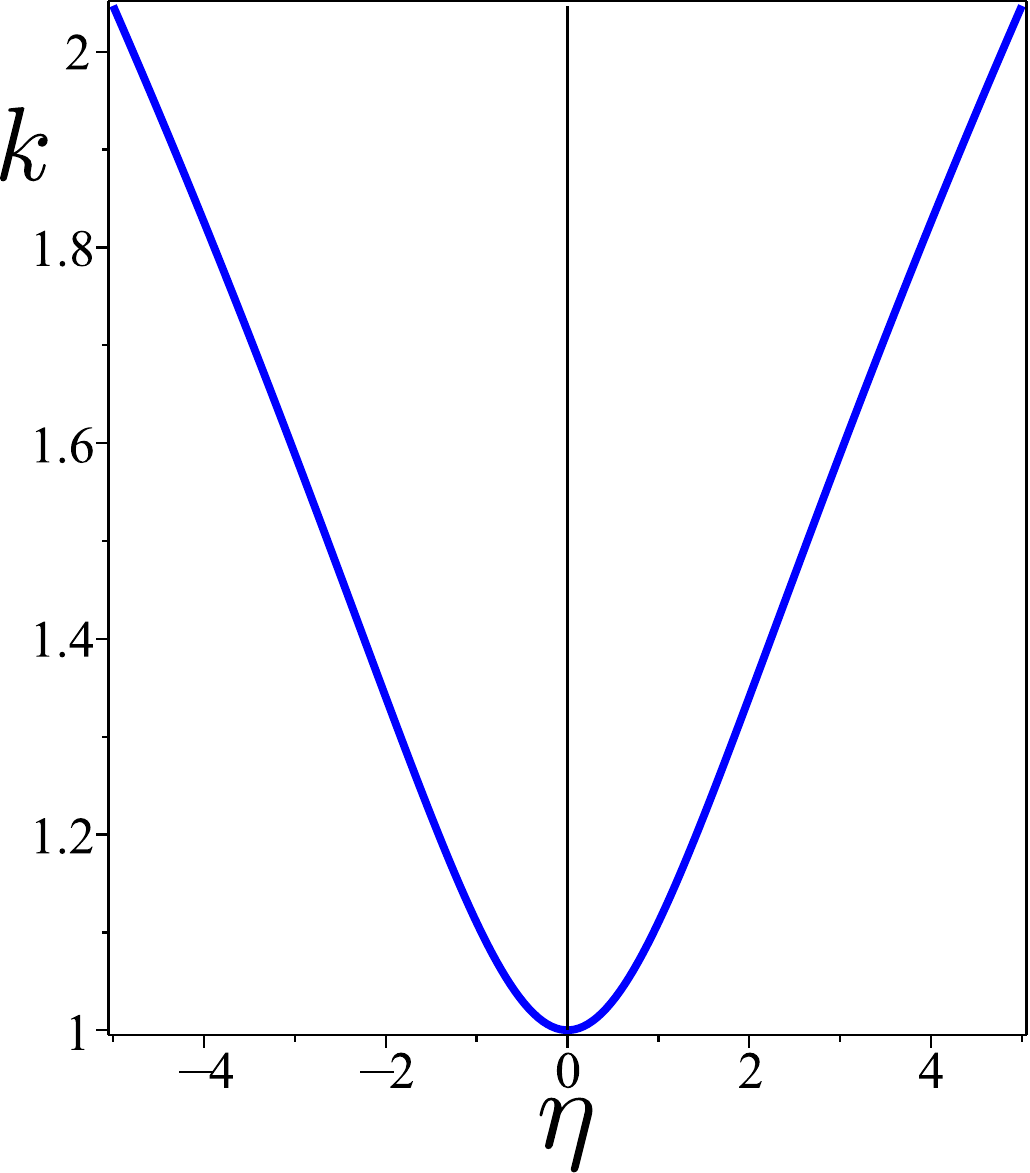}
	\caption{Graph of the function $k$ in \eqref{eq:k_l_r}, illustrating its dependence on $\eta$ through $\widehat{u}_1$ defined in \eqref{eq:u_1_rl}.}
	\label{fig:k_rl}
\end{subfigure}
\caption{Functions $\widehat{u}_1$ and $k$ defined in \eqref{eq:u_1_rl} and \eqref{eq:k_l_r}, respectively, expressed in terms  of $\eta$ as defined by \eqref{eq:eta_def}.}
\label{fig:k_prime_special}
\end{figure}

In \eqref{eq:riemann_system}, we can thus fomally replace the function $Q(u_1)$ with
\begin{equation}
\label{eq:k_l_r}
k(\eta):=Q(\wu_1(\eta)),
\end{equation}
finally arriving at
\begin{subequations}
\label{eq:diagonal_system} 
\begin{numcases} {}
\partial_{t}r+k(r-\ell)\partial_{x}r=-\frac{\lambda}{2}(r+\ell) & $(t,x)\in[0,+\infty)\times\mathbb{R}$, \label{eq:r_syst}\\
\partial_{t}\ell-k(r-\ell)\partial_{x}\ell=-\frac{\lambda}{2}(r+\ell) & $(t,x)\in[0,+\infty)\times\mathbb{R}$, \label{eq:l_syst}
\end{numcases}
\end{subequations}
subject to initial conditions
\begin{equation}
\label{eq:IC_riemann}
r(0,x)=r_0(x)=-L(w_0'(x)), \quad \ell(0,x)=\ell_0(x)=L(w_0'(x))=-r_0(x).
\end{equation}
The graph of the function $k(\eta)$ in \eqref{eq:k_l_r} is illustrated in Fig.~\ref{fig:k_rl}.
\begin{remark}
By \eqref{eq:eta_def}, we can rewrite \eqref{eq:L_u_1} as
\begin{equation}
\label{eq:L_u_1_eta}
2L(\wu_1(\eta))=-\eta.
\end{equation}
By differentiating both sides of \eqref{eq:L_u_1_eta} with respect to $\eta$ and making use of \eqref{eq:L_prime_u1} and \eqref{eq:k_l_r}, we obtain that
\begin{equation}
\label{eq:u_1_eta_diff}
\partial_\eta\wu_{1}(\eta)=-\frac{1}{2k(\eta)}.
\end{equation}
\end{remark}
The \emph{characteristics} of \eqref{eq:diagonal_system} are families of curves $x_1(t,\alpha)$ and $x_2(t,\beta)$, parametrized by $\alpha, \, \beta\in\mathbb{R}$, which, in the inviscid case, represent the trajectories along which the corresponding Riemann functions $r$ and $\ell$ remain constant. More precisely, 
the curve $x_1(t,\alpha)$  starting from a point $\alpha\in\mathbb{R}$ at $t=0$ is the solution of the following differential problem
\begin{equation}
\label{eq:x_1}
\begin{cases}
\frac{\dd x_1}{\dd t}(t,\alpha)=k(\eta(t,x_1(t,\alpha))), \\
x_1(0,\alpha)=\alpha.
\end{cases}
\end{equation}
Similarly, the curve $x_2(t,\beta)$ starting from a point $\beta\in\mathbb{R}$ at $t=0$ solves
\begin{equation}
\label{eq:x_2}
\begin{cases}
\frac{\dd x_2}{\dd t}(t,\beta)=-k(\eta(t,x_2(t,\beta))),\\
x_2(0,\beta)=\beta.
\end{cases}
\end{equation}
\begin{definition}
	\label{def:forward_backward}
	For brevity, we often say that $x_1$ and $x_2$ belong to the \emph{first} and \emph{second} families of characteristics, respectively. 
	Since $k\ge0$, we shall also say that $x_1$ is the \emph{forward} characteristic, whereas $x_2$ is the \emph{backward} characteristic.
\end{definition}
\begin{remark}
	\label{rmk:forward_backward}
The reader should be advised though that this definition is not universally accepted: in equations (3.52) of \cite{majda:compressible}, for example, the role of the two characteristics is interchanged.
\end{remark}

Based on the definition of characteristics $x_1$ and $x_2$, we introduce a concise notation for the derivatives along these curves of any smooth function $f=f(t,x)$. 
\begin{definition}
\label{def:partial_plus_minus}
For any smooth function $f=f(t,x)$, we set
\begin{equation}
\label{eq:partial_plus}
\partial^+f:=\frac{\dd}{\dd t}f(t,x_1(t,\alpha))=\partial_{t}f(t,x_1(t,\alpha))+k(\eta((t,x_1(t,\alpha))))\partial_{x}f(t,x_1(t,\alpha))
\end{equation}
and call it the \emph{forward} derivative of $f$. Similarly, we set 
\begin{equation}
\label{eq:partial_minus}
\partial^-f:=\frac{\dd}{\dd t}f(t,x_2(t,\beta))=\partial_{t}f(t,x_2(t,\beta))-k(\eta(t,x_2(t,\beta)))\partial_{x}f(t,x_2(t,\beta))
\end{equation}
and call it the \emph{backward} derivative of $f$.
\end{definition}
As is apparent from \eqref{eq:diagonal_system}, in the dissipative case the Riemann functions $r$ and $\ell$ in \eqref{eq:riemann_system} fail to be constant along characteristics. Instead, they obey the following laws,
\begin{equation}
\label{eq:r_l_constant}
\begin{cases}
\partial^+r=-\frac{\lambda}{2}\left[r(t,x_1(t,\alpha))+\ell(t,x_1(t,\alpha))\right], \\
\partial^-\ell=-\frac{\lambda}{2}\left[r(t,x_2(t,\beta))+\ell(t,x_2(t,\beta))\right].
\end{cases}
\end{equation}

\begin{remark}\label{rmk:Riemann_invariants_lambda_0}
Classical methods for the inviscid case (see, for example, \cite{chang:existence,maccamy:existence,klainerman:formation}) rely on the invariance of $r$ and $\ell$ along the appropriate characteristic. In the dissipative case, these methods are no longer applicable.
\end{remark}
\begin{remark}\label{rmk:5}
The system \eqref{eq:r_l_constant} governing the evolution of the Riemann functions along characteristics remains locally well-posed under the given initial conditions and assumptions. Indeed, since $w_0'$ is bounded by assumption, the initial values of the Riemann functions, $r_0$ and $\ell_0$, expressed in \eqref{eq:IC_riemann} in terms of $w'_0$, are bounded continuous functions with bounded continuous derivatives. Under these hypotheses, the general theory presented in \cite{douglis:existence} ensures that the strictly hyperbolic system \eqref{eq:diagonal_system} has a unique $\mathcal{C}^1$ solution $(r, \ell)$ locally in time.
\end{remark}

\subsection{Boundedness of Riemann Functions}
In the following, we assume that, for a given value of $\lambda>0$, the pair $(r,\ell)$ is the unique $\mathcal{C}^1$ solution of the system \eqref{eq:diagonal_system} in $[0,t_\ast)\times\mathbb{R}$, subject to the initial conditions in \eqref{eq:IC_riemann} with $r_0\in\class^1$.
Building on the framework recently proposed  in \cite{sugiyama:singularity,sui:vacuum} for dissipative problems, we define
\begin{equation}
\label{eq:A_def}
A(t):=e^{\frac{\lambda}{2}t}
\end{equation}
and rephrase the evolution laws in \eqref{eq:r_l_constant} as
\begin{equation}
\label{eq:A_r_l}
\begin{cases}
\partial^+\left(A(t)r\right)=\frac{\dd}{\dd t}\left(A(t)r(t,x_1(t,\alpha))\right)=-\frac{\lambda}{2}A(t)\ell(t,x_1(t,\alpha)),\\
\partial^-\left(A(t)\ell\right)=\frac{\dd}{\dd t}\left(A(t)\ell(t,x_1(t,\alpha))\right)=-\frac{\lambda}{2}A(t)r(t,x_2(t,\beta)).
\end{cases}
\end{equation}
Integrating both sides of \eqref{eq:A_r_l} and taking into account the initial conditions  in \eqref{eq:IC_riemann}, we obtain that
\begin{equation}
\label{eq:A_r_l_int}
\begin{cases}
A(t)r(t,x_1(t,\alpha))=r_0(\alpha)-\frac{\lambda}{2}\int_0^{t}A(\tau)\ell(\tau,x_1(\tau,\alpha))\dd\tau,\\
A(t)\ell(t,x_2(t,\beta))=\ell_0(\beta)-\frac{\lambda}{2}\int_0^{t}A(\tau)r(\tau,x_2(\tau,\beta))\dd\tau.
\end{cases}
\end{equation}

In the dissipative case, $r(t,x)$ and $\ell(t,x)$ are no longer constant along the characteristics of \eqref{eq:diagonal_system}, but they remain uniformly bounded over time, as we now proceed to show.
\begin{lemma}
\label{lemma:r_l_bounded}
Let $\varphi(t,x,y):=r(t,x)+\ell(t,y)$ and $\eta(t,x,y):=r(t,x)-\ell(t,y)$. Then, for a  solution of the system \eqref{eq:diagonal_system} of class $\mathcal{C}^1$ over $[0,t_\ast)\times\mathbb{R}$, 
\begin{subequations}
\label{eq:bounded_r_l_plus_minus}
\begin{align}
||\varphi(t,\cdot,\cdot)||_{L^{\infty}(\mathbb{R}^2)}&\leq||\varphi(0,\cdot,\cdot)||_{L^{\infty}(\mathbb{R}^2)} \label{eq:bounded_r_l_plus},\\
||\eta(t,\cdot,\cdot)||_{L^{\infty}(\mathbb{R}^2)}&\leq||\eta(0,\cdot,\cdot)||_{L^{\infty}(\mathbb{R}^2)}. \label{eq:bounded_r_l_minus}
\end{align}
\end{subequations}
\end{lemma}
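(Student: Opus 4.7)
The plan is to derive Grönwall-type integral inequalities for the two quantities $\psi(t):=\|\varphi(t,\cdot,\cdot)\|_{L^\infty(\mathbb{R}^2)}$ and $\phi(t):=\|\eta(t,\cdot,\cdot)\|_{L^\infty(\mathbb{R}^2)}$, starting from the integrated characteristic identities \eqref{eq:A_r_l_int}. Fix $t_0\in[0,t_\ast)$ and an arbitrary pair $(x_0,y_0)\in\mathbb{R}^2$. The $\mathcal{C}^1$ regularity of $r$ and $\ell$ makes $k(\eta(t,\cdot))$ locally Lipschitz in $x$, so the characteristic ODEs \eqref{eq:x_1}--\eqref{eq:x_2} admit unique solutions whose time-$t_0$ flows are $\mathcal{C}^1$ diffeomorphisms of $\mathbb{R}$; in particular, there exist unique $\alpha,\beta\in\mathbb{R}$ with $x_1(t_0,\alpha)=x_0$ and $x_2(t_0,\beta)=y_0$, along which I can pull back \eqref{eq:A_r_l_int} to the point of interest.

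Adding the two identities \eqref{eq:A_r_l_int} at these chosen characteristics yields
\begin{equation*}
A(t_0)\,\varphi(t_0,x_0,y_0)=r_0(\alpha)+\ell_0(\beta)-\frac{\lambda}{2}\int_0^{t_0}A(\tau)\bigl[\ell(\tau,x_1(\tau,\alpha))+r(\tau,x_2(\tau,\beta))\bigr]\dd\tau.
\end{equation*}
The bracketed integrand is precisely $\varphi\bigl(\tau,x_2(\tau,\beta),x_1(\tau,\alpha)\bigr)$ and is hence bounded in modulus by $\psi(\tau)$; the initial contribution equals $\varphi(0,\alpha,\beta)$ and is bounded by $\psi(0)$. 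Taking absolute values and then the supremum over $(x_0,y_0)\in\mathbb{R}^2$ produces
\begin{equation*}
A(t)\,\psi(t)\le\psi(0)+\frac{\lambda}{2}\int_0^{t}A(\tau)\,\psi(\tau)\dd\tau,\qquad t\in[0,t_\ast),
\end{equation*}
and Grönwall's lemma applied to $A(t)\psi(t)$ gives $A(t)\psi(t)\le\psi(0)\,A(t)$, whence \eqref{eq:bounded_r_l_plus}. For \eqref{eq:bounded_r_l_minus} I would instead subtract the two identities in \eqref{eq:A_r_l_int}; the new integrand rearranges to $-\eta\bigl(\tau,x_2(\tau,\beta),x_1(\tau,\alpha)\bigr)$, whose modulus is controlled by $\phi(\tau)$, and $|r_0(\alpha)-\ell_0(\beta)|\le\phi(0)$ since $r_0(\alpha)-\ell_0(\beta)=\eta(0,\alpha,\beta)$. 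An identical Grönwall step gives $\phi(t)\le\phi(0)$.

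The main technical obstacle I expect to address concerns the global validity of the two preparatory facts used above on the whole strip $[0,t_\ast)\times\mathbb{R}$: namely (i) that the time-$t_0$ characteristic maps $\alpha\mapsto x_1(t_0,\alpha)$ and $\beta\mapsto x_2(t_0,\beta)$ are surjective onto $\mathbb{R}$, so the integral bound actually controls the supremum, and (ii) that $\psi(t),\phi(t)$ are a priori finite, so the Grönwall inequality is meaningful. Both points are handled by a standard bootstrap on any compact subinterval $[0,T]$ with $T<t_\ast$: at $t=0$ the quantities $\psi(0),\phi(0)$ are finite because $w_0'$ is bounded, the $\mathcal{C}^1$ flows generated by \eqref{eq:x_1}--\eqref{eq:x_2} exist globally and are homeomorphisms of $\mathbb{R}$ for short times, so the integral inequality closes and propagates finiteness forward in time; letting $T\uparrow t_\ast$ then delivers the estimates on the whole maximal interval.
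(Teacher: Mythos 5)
Your proposal is correct and follows essentially the same route as the paper: pull back the integrated characteristic identities \eqref{eq:A_r_l_int} through an arbitrary target point, add (resp.\ subtract) them to produce an integral identity for $\varphi$ (resp.\ $\eta$), bound the integrand by the $L^\infty$ norm, and close with Gr\"onwall applied to $A(t)\|\cdot\|_{L^\infty(\mathbb{R}^2)}$. The only difference is that you spell out the surjectivity of the time-$t_0$ characteristic maps and the a priori finiteness of the sup-norms, which the paper leaves implicit; your treatment of these points is brief but reasonable, and your rearrangement of the integrands as $\varphi(\tau,x_2(\tau,\beta),x_1(\tau,\alpha))$ and $-\eta(\tau,x_2(\tau,\beta),x_1(\tau,\alpha))$ is more precise than the paper's corresponding displayed formula, which carries a typographical inconsistency between the signs on its two sides.
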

The proof of Lemma~\ref{lemma:r_l_bounded} employs the strategy used in the proof of Lemma~7 in \cite{sugiyama:singularity}; its adaptation to our context is deferred to Appendix~\ref{sec:aux_res}.
\begin{remark}\label{rmk:6}
As a consequence of Lemma~\ref{lemma:r_l_bounded}, for initial profiles $r_0(x)=-\ell_0(x)\in \mathcal{C}^1$ as given in \eqref{eq:IC_riemann}, the following bounds hold on $[0,t_\ast)\times\mathbb{R}$,
\begin{subequations}
\label{eq:bounded_r_l_plus_minus_abs}
\begin{align}
|r(t,x)+\ell(t,y)|\leq||\varphi(t,\cdot,\cdot)||_{L^{\infty}(\mathbb{R}^2)}&\leq||\varphi(0,\cdot,\cdot)||_{L^{\infty}(\mathbb{R}^2)}\leq 2||r_0||_\infty, \label{eq:bounded_r_l_plus_abs}\\
|r(t,x)-\ell(t,y)|\leq||\eta(t,\cdot,\cdot)||_{L^{\infty}(\mathbb{R}^2)}&\leq||\eta(0,\cdot,\cdot)||_{L^{\infty}(\mathbb{R}^2)}\leq 2||r_0||_\infty.\label{eq:bounded_r_l_minus_abs}
\end{align}
\end{subequations}
Here, $||\cdot||_{\infty}$ denotes the $L^\infty$ norm over $\mathbb{R}$. This means that both $r(t,x)$ and  $\ell(t,y)$ are uniformly bounded in $[0,t_\ast)\times\mathbb{R}$.
\end{remark}
\begin{definition}
\label{rmk:t_critical}
	By the \emph{continuation principle} (see, for example, \cite[p.\,100]{majda:compressible} for this particular incarnation), the estimates \eqref{eq:bounded_r_l_plus_minus_abs} imply the following \emph{dichotomy}: for a given value of $\lambda>0$, either there is a \emph{critical time} $\tast$ such that
	\begin{equation}
		\label{eq:dichotomy_estimate}
		||\partial_{x}r||_\infty+||\partial_{x}\ell||_\infty\to+\infty\quad\text{for}\quad t\to \tast,
	\end{equation}
or there is a \emph{global} smooth solution of \eqref{eq:riemann_system} for all $0\leq t<+\infty$. In the former case, which is the one we are interested in, a \emph{shock} wave is formed in a finite time. In the latter case, we conventionally set $\tast=+\infty$.
\end{definition}
\begin{remark}
It follows from the definition of $k$ in \eqref{eq:k_l_r} and the monotonicity of $L$ as expressed by \eqref{eq:L_u_1} that, under the assumptions of Lemma~\ref{lemma:r_l_bounded}, $k$ is subject to the following lower and upper bounds,
\begin{equation}
\label{eq:estimates}
1=k(0)\leq k(\eta)\leq\delta \quad\text{with}\quad \delta:=k\left(||r_0||_{\infty}+||\ell_0||_{\infty}\right)=k\left(2||r_0||_{\infty}\right)>1.
\end{equation}
\end{remark}

\subsection{Energy Dissipation and \emph{Mass} Conservation}
Here, we show that regular solutions of the system \eqref{eq:wave_system} in $[0,\tast)\times\mathbb{R}$ dissipate energy while preserving \emph{mass}.\footnote{The latter is  meant as a synonym for the total twist.} We shall employ the following lemma, which asserts that the limits of the spatial derivative of the twist angle $w(t,x)$, as $x$ approaches $\pm\infty$, coincide with the limits of the derivatives of the initial datum $w_0(x)$.
\begin{lemma}
\label{lemma:w_x_t_infty}
Let $w'_0(\pm\infty):=\lim_{x\to\pm\infty}w_0'(x)$ be finite. Then, for a $\mathcal{C}^1$ solution of the system \eqref{eq:wave_system}, the following limits hold for every $t\in[0,\tast)$,
\begin{equation}
\label{eq:lim_infinity_wx}
\lim_{x\to\pm\infty}\partial_{x}w(t,x)=w_0'(\pm\infty).
\end{equation}  
Similarly, since $\partial_{t}w(0,x)=0$ for all $x\in\mathbb{R}$, then
\begin{equation}
\label{eq:lim_infinity_wt}
\lim_{x\to\pm\infty}\partial_{t}w(t,x)=0,
\end{equation}  
for every $t\in[0,\tast)$.
\end{lemma}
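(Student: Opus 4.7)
The plan is to pass to the Riemann invariants $r,\ell$, exploit the characteristic representation \eqref{eq:A_r_l_int}, and close a coupled Gronwall inequality that handles the mutual dependence between the limits of $r$ and $\ell$. Write $c_\pm:=w_0'(\pm\infty)$. The first observation is that the constant profile $(r,\ell)\equiv(-L(c_\pm),L(c_\pm))$ is an exact stationary solution of \eqref{eq:diagonal_system}: the transport terms vanish because the profile is spatially constant, and the source vanishes because $r+\ell=0$. By \eqref{eq:IC_riemann} the initial Riemann data approach precisely these constants as $x\to\pm\infty$. The uniform bound $k(\eta)\leq\delta$ from \eqref{eq:estimates} gives $|x_j(\tau,\alpha)-\alpha|\leq\delta\tau\leq\delta t$ for $j\in\{1,2\}$, so sending $\alpha\to\pm\infty$ drives the whole characteristic $\tau\mapsto x_j(\tau,\alpha)$ to $\pm\infty$ uniformly for $\tau\in[0,t]$. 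I treat $x\to+\infty$ in detail; the case $x\to-\infty$ is symmetric.

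Next, I rewrite \eqref{eq:A_r_l_int} by adding and subtracting the equilibrium value and using the identity $\tfrac{\lambda}{2}\int_0^tA(\tau)\,d\tau=A(t)-1$, which makes the constant contributions cancel and yields
\begin{equation*}
A(t)\bigl(r(t,x_1(t,\alpha))+L(c_+)\bigr)=\bigl(r_0(\alpha)+L(c_+)\bigr)-\tfrac{\lambda}{2}\int_0^tA(\tau)\bigl(\ell(\tau,x_1(\tau,\alpha))-L(c_+)\bigr)\,d\tau,
\end{equation*}
and the analogous identity for $\ell$ along the backward characteristic. Setting
\[F(t):=\limsup_{x\to+\infty}|r(t,x)+L(c_+)|,\qquad G(t):=\limsup_{y\to+\infty}|\ell(t,y)-L(c_+)|,\]
taking absolute values and $\limsup$ as $\alpha\to+\infty$, and applying reverse Fatou to the $\tau$-integral (justified by $A(\tau)|r|,A(\tau)|\ell|\leq 2A(t)\|r_0\|_\infty$ from Remark~\ref{rmk:6}), I obtain the coupled inequalities
\[A(t)F(t)\leq\tfrac{\lambda}{2}\int_0^tA(\tau)G(\tau)\,d\tau,\qquad A(t)G(t)\leq\tfrac{\lambda}{2}\int_0^tA(\tau)F(\tau)\,d\tau.\]
Summing them and applying Gronwall's lemma to the nonnegative quantity $A(t)(F(t)+G(t))$ forces $F\equiv G\equiv 0$ on $[0,\tast)$. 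Hence $r(t,x)\to-L(c_+)$ and $\ell(t,x)\to L(c_+)$ as $x\to+\infty$; inverting via \eqref{eq:u_1_rl} and \eqref{eq:riemann_system} converts this to $\partial_xw=\wu_1(r-\ell)\to L^{-1}(L(c_+))=c_+$ and $\partial_tw=(r+\ell)/2\to 0$, which are exactly \eqref{eq:lim_infinity_wx}--\eqref{eq:lim_infinity_wt}.

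The main obstacle is the apparent circularity built into \eqref{eq:A_r_l_int}: the limit of $r$ along a forward characteristic depends on the limit of $\ell$, and vice versa. The coupled Gronwall step above is precisely the device that decouples them, and the key algebraic fact that makes it work is the identity $\tfrac{\lambda}{2}\int_0^tA(\tau)\,d\tau=A(t)-1$, which produces the clean cancellation of the constant parts before one passes to the limit. The one remaining technical point — interchanging $\limsup$ and the time integral — is handled by the uniform boundedness in Lemma~\ref{lemma:r_l_bounded}; everything else is algebra plus the finite-propagation-speed estimate $|x_j(\tau,\alpha)-\alpha|\leq\delta t$.
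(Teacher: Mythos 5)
Your proof is correct, and it takes a genuinely different route from the paper's, though both ultimately hinge on a Grönwall-type argument after passing to the limit under the time integral. The paper works directly with the combination $\eta=r-\ell$, defines $S^\pm(t)=\limsup_{x\to\pm\infty}\eta(t,x)$ and $I^\pm(t)=\liminf_{x\to\pm\infty}\eta(t,x)$, derives separate Grönwall bounds $S^\pm(t)\leq r_0(\pm\infty)-\ell_0(\pm\infty)\leq I^\pm(t)$, and squeezes to force equality; the $\partial_tw$ limit is then read off from $r+\ell$. You instead subtract the constant stationary profile $(-L(c_\pm),L(c_\pm))$ (a genuine equilibrium of \eqref{eq:diagonal_system}), take absolute values, track the two deviations $F=\limsup|r+L(c_+)|$ and $G=\limsup|\ell-L(c_+)|$ separately, and close a \emph{coupled} Grönwall inequality on $A(t)(F+G)(t)$. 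The key algebraic observation $\tfrac{\lambda}{2}\int_0^tA(\tau)\,\dd\tau=A(t)-1$, which annihilates the constant parts, plays in your proof the role that the squeeze between limsup and liminf plays in the paper's. What your version buys is that $r$ and $\ell$ are tracked at their own evaluation points along their own characteristics, so there is no need to pass from the two-point quantity $r(\tau,x_2(\tau,\beta))-\ell(\tau,x_1(\tau,\alpha))$ in the integrand to the one-point $\limsup$ of $\eta(\tau,\cdot)$ --- a step the paper does somewhat tacitly. It also delivers the limits of $r$ and $\ell$ individually, so both conclusions \eqref{eq:lim_infinity_wx} and \eqref{eq:lim_infinity_wt} drop out in a single stroke. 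The paper's version is a little more economical in notation. Both rely on the same ingredients: boundedness from Lemma~\ref{lemma:r_l_bounded} to dominate the integrand, the finite-propagation estimate $|x_j(\tau,\alpha)-\alpha|\leq\delta\tau$ to guarantee that sending $\alpha\to\pm\infty$ sends the whole characteristic to $\pm\infty$, and regularity on $[0,\tast)$ so that characteristics foliate the strip.
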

The proof of Lemma~\ref{lemma:w_x_t_infty} can be found in Appendix~\ref{sec:aux_res}; it is an extension to our present context of a similar result proved in Lemma~8 of \cite{sugiyama:singularity}. Building upon Lemma~\ref{lemma:w_x_t_infty}, we derive a dissipation law for the total energy associated with  the system \eqref{eq:wave_system}, which comprises a quadratic kinetic energy and a quartic potential energy.
\begin{proposition}
\label{prop:dissipation_law}
For regular solutions $w(t,x)$ of system \eqref{eq:wave_system}, the following energy dissipation law holds
\begin{equation}
\label{eq:energy_dissipation}
\partial_{t}E(t)=-\lambda\int_{\mathbb{R}}(\partial_{t}w(t,x))^2\dd x,
\end{equation}
where
\begin{equation}
\label{eq:energy_def}
E(t):=\frac{1}{2}\int_{\mathbb{R}}\left[(\partial_{t}w)^2+(\partial_{x}w)^2\left(1+\frac{1}{6}(\partial_{x}w)^2\right)\right]\dd x.
\end{equation}
\end{proposition}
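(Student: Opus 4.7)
The plan is to compute $\partial_{t}E(t)$ by differentiating under the integral sign, integrate by parts in $x$ to generate exactly the combination $Q^2(\partial_x w)\partial_{xx}w$, and then close the argument by substituting the evolution equation \eqref{eq:wave_eq}. Throughout, I treat $w(t,x)$ as a regular ($\mathcal{C}^2$) solution with enough spatial decay of $\partial_t w$ so that $E(t)$ is finite and differentiation under the integral sign is legitimate; this amounts to a standing hypothesis on the class of initial data for which $E(0)<+\infty$.

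First I would differentiate the integrand of \eqref{eq:energy_def} pointwise in $t$. The kinetic piece contributes $\partial_t w\,\partial_{tt}w$. The quartic potential piece, $\tfrac12(\partial_x w)^2+\tfrac{1}{12}(\partial_x w)^4$, yields after differentiating in $t$
\begin{equation*}
\left[\partial_x w+\tfrac{1}{3}(\partial_x w)^3\right]\partial_{tx}w,
\end{equation*}
so that
\begin{equation*}
\partial_t E(t)=\int_{\mathbb{R}}\partial_t w\,\partial_{tt}w\,\dd x+\int_{\mathbb{R}}\left[\partial_x w+\tfrac{1}{3}(\partial_x w)^3\right]\partial_{tx}w\,\dd x.
\end{equation*}

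Next, I would integrate by parts in the second integral, exploiting that $\partial_x\bigl[\partial_x w+\tfrac{1}{3}(\partial_x w)^3\bigr]=\bigl(1+(\partial_x w)^2\bigr)\partial_{xx}w=Q^2(\partial_x w)\partial_{xx}w$ by \eqref{eq:kernel}. The boundary term reads
\begin{equation*}
\Bigl[\partial_t w\bigl(\partial_x w+\tfrac{1}{3}(\partial_x w)^3\bigr)\Bigr]_{x=-\infty}^{x=+\infty},
\end{equation*}
and vanishes by Lemma~\ref{lemma:w_x_t_infty}: the factor $\partial_t w$ tends to $0$ as $x\to\pm\infty$ while $\partial_x w$ tends to the finite value $w_0'(\pm\infty)$. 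Collecting everything,
\begin{equation*}
\partial_t E(t)=\int_{\mathbb{R}}\partial_t w\bigl[\partial_{tt}w-Q^2(\partial_x w)\partial_{xx}w\bigr]\dd x,
\end{equation*}
and inserting \eqref{eq:wave_eq} gives the claimed law \eqref{eq:energy_dissipation}.

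The only genuinely non-routine step is the vanishing of the boundary term at $x=\pm\infty$: this is what distinguishes the quartic dissipative setting from a merely formal calculation and is precisely what Lemma~\ref{lemma:w_x_t_infty} is tailored to handle, since the cubic factor $\tfrac{1}{3}(\partial_x w)^3$ only stays finite at infinity because $w_0'(\pm\infty)$ is assumed finite and $\partial_x w$ preserves this limit in time. All other manipulations (the product rule, differentiation under the integral sign, integration by parts) are standard given $\mathcal{C}^2$ regularity and suitable integrability of the initial data.
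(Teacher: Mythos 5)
Your proposal is correct and follows essentially the same route as the paper: multiply the evolution equation by $\partial_t w$ (equivalently, differentiate $E$ and substitute the PDE), integrate by parts in $x$, and invoke Lemma~\ref{lemma:w_x_t_infty} to kill the boundary term $\partial_t w\,\partial_x w\bigl(1+\tfrac13(\partial_x w)^2\bigr)\big|_{-\infty}^{+\infty}$. The only difference is cosmetic ordering of the steps, and the boundary term you obtain is exactly the one identified in the paper's equation \eqref{eq:energy_law0}.
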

Thus, the energy $E(t)$ of the system decreases over time due to the damping term; the rate of dissipation is proportional to the integral of $(\partial_{t}w(t,x))^2$, which represents the kinetic energy of the system.
\begin{proof}
We multiply both sides of equation \eqref{eq:wave_eq} by $\partial_{t}w(t,x)$ and then integrate by parts with respect to $x$ over $\mathbb{R}$; we thus obtain that
\begin{align}
\label{eq:energy_law0}
\partial_{t}\int_{\mathbb{R}}\frac{1}{2}&\left[(\partial_{t}w(t,x))^2+(\partial_{x}w(t,x))^2\left(1+\frac{1}{6}(\partial_{x}w(t,x))^2\right)\right]\dd x\nonumber\\
&=\left.\left[\partial_{x}w\partial_{t}w\left(1+\frac{1}{3}(\partial_{x}w(t,x))^2\right)\right]\right|_{-\infty}^{+\infty}-\lambda\int_{\mathbb{R}}(\partial_{t}w(t,x))^2\dd x.
\end{align}
By Lemma~\ref{lemma:w_x_t_infty}, the boundary terms in \eqref{eq:energy_law0} vanish for every $t$, which leads us to \eqref{eq:energy_dissipation}.
\end{proof}

\begin{proposition}\label{prop:mass_conservation}
Let a regular solutions $w(t,x)$ of system \eqref{eq:wave_system} be integrable over $\mathbb{R}$ for all $t\in[0,\tast)$, and let $M$ be the function of $t$ defined by
\begin{equation}
\label{eq:mass_def}
M(t):=\int_\mathbb{R}w(t,x)\dd x.
\end{equation}
If the initial datum $w_0(x)$ in \eqref{eq:wave_system} is such that $w_0'(+\infty)=w'_0(-\infty)$, then the following conservation law holds
\begin{equation}
\label{eq:conservation_displacement}
M(t)=M(0) \quad \, \text{for every}\ t\in[0,\tast).
\end{equation}
\end{proposition}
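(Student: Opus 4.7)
The plan is to differentiate $M(t)$ twice with respect to $t$, use \eqref{eq:wave_eq} to convert $\partial_{tt}w$ into a spatial divergence plus a damping term, and apply Lemma~\ref{lemma:w_x_t_infty} to check that the boundary contributions at $x=\pm\infty$ vanish under the assumption $w_0'(+\infty)=w_0'(-\infty)$. This reduces the statement to a first-order linear ODE for $M'(t)$ whose initial data force $M'(t)\equiv0$ and hence $M(t)\equiv M(0)$.

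The key observation is that, since $Q^2(\xi)=1+\xi^2$, the elastic term in \eqref{eq:wave_eq} is a perfect $x$-derivative,
\begin{equation*}
Q^2(\partial_x w)\,\partial_{xx}w=\partial_x\!\left[\partial_x w+\tfrac{1}{3}(\partial_x w)^3\right].
\end{equation*}
Multiplying \eqref{eq:wave_eq} by $e^{\lambda t}$ and grouping the time derivatives produces the divergence identity
\begin{equation*}
\partial_t\!\left(e^{\lambda t}\partial_t w\right)=\partial_x\!\left[e^{\lambda t}\!\left(\partial_x w+\tfrac{1}{3}(\partial_x w)^3\right)\right].
\end{equation*}
Integrating in $x$ over $\mathbb{R}$ and invoking Lemma~\ref{lemma:w_x_t_infty} together with the hypothesis $w_0'(+\infty)=w_0'(-\infty)$ kills the right-hand side, since the spatial limit of $\partial_x w+\tfrac{1}{3}(\partial_x w)^3$ is the same at $+\infty$ and at $-\infty$. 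Hence $e^{\lambda t}\int_{\mathbb{R}}\partial_t w(t,x)\,\dd x$ is constant in $t$ on $[0,\tast)$, and by \eqref{eq:wave_initial_velocity} this constant equals zero. Therefore $M'(t)=\int_{\mathbb{R}}\partial_t w(t,x)\,\dd x\equiv0$, and the conservation law $M(t)=M(0)$ follows upon integrating in time.

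The only genuinely delicate point, and the main technical obstacle, is the interchange of $\partial_t$ with the improper integral over $\mathbb{R}$ and of the spatial integration by parts, since integrability of $w(t,\cdot)$ alone does not automatically imply integrability of $\partial_t w$ or of the cubic expression $\partial_x w+\tfrac{1}{3}(\partial_x w)^3$. This can be handled by a standard cutoff argument: one first works on the compact interval $[-R,R]$, where Leibniz's rule is routine, and then passes to the limit $R\to+\infty$. The boundary terms at $x=\pm R$ converge to the expression furnished by Lemma~\ref{lemma:w_x_t_infty}, which vanishes by hypothesis, while the bulk integrals converge thanks to the stated integrability of $w(t,\cdot)$ combined with the uniform bounds on $\partial_t w$ and $\partial_x w$ supplied by Lemma~\ref{lemma:r_l_bounded} on each closed subinterval $[0,T]\subset[0,\tast)$.
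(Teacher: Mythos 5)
Your proof is correct and is essentially the same as the paper's: both integrate the wave equation over $\mathbb{R}$, observe that the elastic term is the $x$-derivative of $\partial_x w + \tfrac{1}{3}(\partial_x w)^3$ so that its integral reduces to boundary terms which vanish by Lemma~\ref{lemma:w_x_t_infty} under the hypothesis $w_0'(+\infty)=w_0'(-\infty)$, and then solve the resulting linear second-order ODE $\ddot M+\lambda\dot M=0$ with $\dot M(0)=0$. The only cosmetic difference is that you fold the damping term into an integrating factor $e^{\lambda t}$ before integrating, whereas the paper integrates first and then solves the resulting ODE; your closing remarks about the cutoff argument and the uniform bounds from Lemma~\ref{lemma:r_l_bounded} make explicit a technical interchange that the paper leaves tacit.
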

\begin{proof}
By integrating both sides of  equation \eqref{eq:wave_eq} over $\mathbb{R}$, we obtain that
\begin{equation}
\label{eq:displacement_law0}
\partial_{tt}\int_{\mathbb{R}}w(t,x)\dd x=-\lambda\partial_t\int_{\mathbb{R}}w(t,x)\dd x+\left.\left[\partial_{x}w\left(1+\frac{1}{3}(\partial_{x}w)^2\right)\right]\right|_{-\infty}^{+\infty}.
\end{equation}
By Lemma~\ref{lemma:w_x_t_infty}, since  $w'_0(+\infty)=w_0'(-\infty)$, the boundary terms in \eqref{eq:displacement_law0} vanish, and the latter equation reduces to
\begin{equation}
\label{eq:displacement_ode}
\ddot{M}(t)=-\lambda\dot{M}(t)
\end{equation}
for the function $M(t)$ defined in \eqref{eq:mass_def}. By solving \eqref{eq:displacement_ode} with the initial conditions in \eqref{eq:wave_initial_configuration} and \eqref{eq:wave_initial_velocity}, we arrive at \eqref{eq:conservation_displacement}.
\end{proof}

\section{Pre-Breakdown Properties}\label{sec:pre-breakdown}
Here, we prepare the ground for the analysis of the formation of discontinuities in the derivatives of the Riemann functions that will be performed in the following section. Detailed proof of the following preparatory results are deferred to Appendix~\ref{sec:aux_res}.

\subsection{Evolution along characteristics}
We first focus on the behavior of the spatial derivatives of the solutions $r$ and $\ell$ of \eqref{eq:diagonal_system} along the characteristic curves described by \eqref{eq:x_1} and \eqref{eq:x_2}. This is meant to lay the foundation for the analysis of shock formation along characteristics. We are primarily interested in sufficient conditions for the breakdown of solutions.
\begin{definition}
Let the pair $(r,\ell)$ be a solution of class $\mathcal{C}^1$ of \eqref{eq:diagonal_system}. We define the pair of \emph{weighted Riemann derivatives} $(p,q)$ as
\begin{subequations}
\label{eq:y_q_def}
\begin{align}
p(t,x)&:=A(t)\sqrt{k(\eta(t,x))} \, \partial_{x}r(t,x),\label{eq:y_def}\\
q(t,x)&:=A(t)\sqrt{k(\eta(t,x))} \, \partial_{x}\ell(t,x).\label{eq:q_def}
\end{align}
\end{subequations}
\end{definition}
The following Proposition analyzes the behavior of  $(p,q)$ along the characteristic curves $x_1=x_1(t,\alpha)$ and $x_2=x_2(t,\beta)$, respectively.
\begin{proposition}
If the pair $(r,\ell)$ is a solution of class $\mathcal{C}^1$ of \eqref{eq:diagonal_system}, then the pair $(p,q)$  satisfies
\begin{subequations}
\label{eq:y_q_characteristics}
\begin{align}
\partial^+p=&-\frac{\lambda}{2}q(t,x_1(t,\alpha))-A(t)^{-1}f(\eta(t,x_1(t,\alpha)))p(t,x_1(t,\alpha))^2,\label{eq:partial_plus_y}\\
\partial^-q=&-\frac{\lambda}{2}p(t,x_2(t,\beta))-A(t)^{-1}f(\eta(t,x_2(t,\beta)))q(t,x_2(t,\beta))^2,\label{eq:partial_plus_q}
\end{align}
\end{subequations}
where $f$ is the function defined by
\begin{equation}
	\label{eq:f_definition}
	f(\eta):=\frac{k'(\eta)}{\sqrt{k(\eta)}}.
\end{equation}
\end{proposition}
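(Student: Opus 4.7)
The proof is essentially a careful bookkeeping exercise using the chain rule along characteristics. I would treat the two identities in \eqref{eq:y_q_characteristics} symmetrically and present only the computation of $\partial^+ p$, since $\partial^- q$ follows by swapping $r\leftrightarrow\ell$ and $x_1\leftrightarrow x_2$ while flipping a sign in the characteristic velocity.

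The plan is to first differentiate the $r$-equation \eqref{eq:r_syst} with respect to $x$. Since $k$ depends on $x$ through $\eta=r-\ell$, this produces
\begin{equation*}
\partial^+(\partial_x r) = \partial_t\partial_x r + k(\eta)\partial_{xx}r = -k'(\eta)(\partial_x r-\partial_x\ell)\partial_x r-\frac{\lambda}{2}(\partial_x r+\partial_x\ell).
\end{equation*}
Next, I need $\partial^+\eta$ along the \emph{forward} characteristic $x_1(t,\alpha)$. Here $\partial^+ r$ is given directly by \eqref{eq:r_syst}, but $\partial^+\ell$ is transported along the \emph{wrong} characteristic, so I rewrite $\partial^+\ell = \partial_t\ell + k(\eta)\partial_x\ell$ and use \eqref{eq:l_syst} to obtain $\partial^+\ell = 2k(\eta)\partial_x\ell - \frac{\lambda}{2}(r+\ell)$. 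Subtracting yields the clean identity $\partial^+\eta = -2k(\eta)\partial_x\ell$, and hence $\partial^+\sqrt{k(\eta)} = -k'(\eta)\sqrt{k(\eta)}\,\partial_x\ell$. Finally, since $A$ is a function of $t$ alone, $\partial^+ A = A'(t) = \tfrac{\lambda}{2}A$.

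Now I apply the product rule to $p = A\sqrt{k(\eta)}\,\partial_x r$ along $x_1$, assembling the three contributions above. The term $\tfrac{\lambda}{2}p$ coming from $\partial^+ A$ cancels the contribution $-\tfrac{\lambda}{2}A\sqrt{k(\eta)}\partial_x r$ arising from $\partial^+(\partial_x r)$; the cross term $-Ak'(\eta)\sqrt{k(\eta)}\,\partial_x\ell\,\partial_x r$ from $\partial^+\sqrt{k(\eta)}$ cancels $+A\sqrt{k(\eta)}k'(\eta)\partial_x\ell\,\partial_x r$ from the expansion of $\partial^+(\partial_x r)$. What survives is
\begin{equation*}
\partial^+ p = -\frac{\lambda}{2}\,A\sqrt{k(\eta)}\,\partial_x\ell - A\sqrt{k(\eta)}\,k'(\eta)(\partial_x r)^2.
\end{equation*}
Recognizing $A\sqrt{k(\eta)}\,\partial_x\ell = q$ and writing $(\partial_x r)^2 = p^2/(A^2 k(\eta))$ recasts the second term as $A^{-1}\,k'(\eta)/\sqrt{k(\eta)}\,p^2 = A^{-1}f(\eta)p^2$, giving exactly \eqref{eq:partial_plus_y}.

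The symmetric computation for $\partial^- q$ along $x_2$ is identical with the roles of $r$ and $\ell$ interchanged; the same cancellations occur because $\partial^-\eta = \partial^- r - \partial^-\ell = -2k(\eta)(-\partial_x r)+0\cdot\ldots$ — more precisely, one finds $\partial^-\eta = 2k(\eta)\partial_x r$ so that $\partial^-\sqrt{k(\eta)}=k'(\eta)\sqrt{k(\eta)}\partial_x r$, and the analogous bookkeeping delivers \eqref{eq:partial_plus_q}. I expect no serious obstacle: the only delicate point is the correct evaluation of $\partial^\pm\ell$ and $\partial^\pm r$ along the \emph{opposite} family of characteristics, which is what produces the factor $2k(\eta)$ needed for the crucial cancellations.
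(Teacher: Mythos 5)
Your computation of $\partial^+ p$ is correct and follows the same route as the paper: differentiate the $r$-equation in $x$ to obtain a transport law for $\partial_x r$ along $x_1$, derive $\partial^+\eta = -2k(\eta)\partial_x\ell$ from the fact that $\ell$ rides the opposite characteristic family, then apply the product rule to $p = A\sqrt{k(\eta)}\,\partial_x r$; the cancellations you identify are exactly the right ones.

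However, the backward case as you sketch it contains a sign error that would wreck the analogous cancellations. You assert $\partial^-\eta = 2k(\eta)\partial_x r$ and hence $\partial^-\sqrt{k(\eta)} = +k'(\eta)\sqrt{k(\eta)}\,\partial_x r$; the correct identity has the opposite sign, $\partial^-\eta = -2k(\eta)\partial_x r$, as recorded in the paper's \eqref{eq:partial_plus_minus_u1}. To see why: $\partial^- r = \partial_t r - k(\eta)\partial_x r$, and the $r$-equation \eqref{eq:r_syst} gives $\partial_t r = -k(\eta)\partial_x r - \tfrac{\lambda}{2}(r+\ell)$, so $\partial^- r = -2k(\eta)\partial_x r - \tfrac{\lambda}{2}(r+\ell)$; subtracting $\partial^-\ell = -\tfrac{\lambda}{2}(r+\ell)$ yields $\partial^-\eta = -2k(\eta)\partial_x r$. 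With your sign, the cross term $A\,k'(\eta)\sqrt{k(\eta)}\,\partial_x r\,\partial_x\ell$ contributed by $\partial^-\sqrt{k(\eta)}$ comes out with the wrong sign and would \emph{add} to, rather than cancel against, the analogous cross term from the $x$-differentiated $\ell$-equation, so you would not land on \eqref{eq:partial_plus_q}. This is precisely the delicate point you flag at the end of your proposal, and it is where your sketch slips.
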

\begin{proof}
Using Definition~\ref{def:partial_plus_minus}, we obtain that
\begin{equation}
\label{eq:partial_x_partial_plus_r}
\partial_{x}\partial^+r=\partial_{tx}r+k(\eta)\partial_{xx}r+k'(\eta)(\partial_{x}r-\partial_{x}\ell)\partial_{x}r
=\partial_{x}\partial^+r+k'(\eta)(\partial_{x}r-\partial_{x}\ell)\partial_{x}r,
\end{equation}
where all quantities are evaluated along the characteristic curve $x_1(t,\alpha)$. By differentiating both sides of the first equation in \eqref{eq:r_l_constant} with respect to $x$ and using \eqref{eq:partial_x_partial_plus_r}, we obtain that
\begin{equation}
\label{eq:partial_plus_partial_x_r}
\partial^+\partial_{x}r=-\frac{\lambda}{2}(\partial_{x}r+\partial_{x}\ell)-k'(\eta)(\partial_{x}r-\partial_{x}\ell)\partial_{x}r.
\end{equation}
Similarly, we have that
\begin{equation}
\label{eq:partial_minus_partial_x_l}
\partial^-\partial_{x}\ell=-\frac{\lambda}{2}(\partial_{x}r+\partial_{x}\ell)-k'(\eta)(\partial_{x}\ell-\partial_{x}r)\partial_{x}\ell,
\end{equation}
where all quantities are evaluated along the characteristic curve $x_2(t,\beta)$.
From \eqref{eq:L_u_1} and \eqref{eq:riemann_system_Q} we can also derive the equations 
\begin{equation}
\label{eq:partial_plus_minus_u1}
\partial^+\eta=-2k(\eta(t,x_1(t,\alpha)))\partial_{x}\ell(t,x_1(t,\alpha)), \quad \partial^-\eta=-2k(\eta(t,x_2(t,\beta)))\partial_{x}r(t,x_2(t,\beta)).
\end{equation}
Then, by multiplying both sides of \eqref{eq:partial_plus_partial_x_r} and \eqref{eq:partial_minus_partial_x_l} by $A(t)\sqrt{k(\eta)}$ evaluated at $(t,x_1(t,\alpha))$ and $(t,x_2(t,\beta))$, respectively, and making use of \eqref{eq:y_q_def} and \eqref{eq:f_definition}, we obtain \eqref{eq:y_q_characteristics}.
\end{proof}
\begin{remark}
	\label{rmk:f_parametric}
	Since $f$ is defined implicitly by \eqref{eq:k_l_r}, it is better represented in parametric form. By differentiating both sides of \eqref{eq:k_l_r}, and using \eqref{eq:kernel}, \eqref{eq:L_prime_u1}, and \eqref{eq:u_1_rl}, we easily arrive at 
	\begin{equation}
		\label{eq:f_parametric}
		\eta=-u\sqrt{1+u^2}-\arcsinh u,\quad f=-\frac{1}{2}\frac{u}{(1+u^2)^{5/4}},
	\end{equation}
the former being simply \eqref{eq:L_u_1} rewritten. It is clear from \eqref{eq:f_parametric} that $f$ is an odd function of $\eta$. It exhibits an isolated minimum at $\eta=\eta_0$ and an isolated maximum at $\eta=-\eta_0$, whose values are $\mp f_0$, respectively, corresponding to the values $u_0=\pm\sqrt{2/3}$ of the parameter $u$ in \eqref{eq:f_parametric}.
\end{remark}
The graph of the function $f(\eta)$ is illustrated in Fig.~\ref{fig:k_prime_rl}. 
\begin{figure}[h] 
	\centering
	\includegraphics[width=0.3\linewidth]{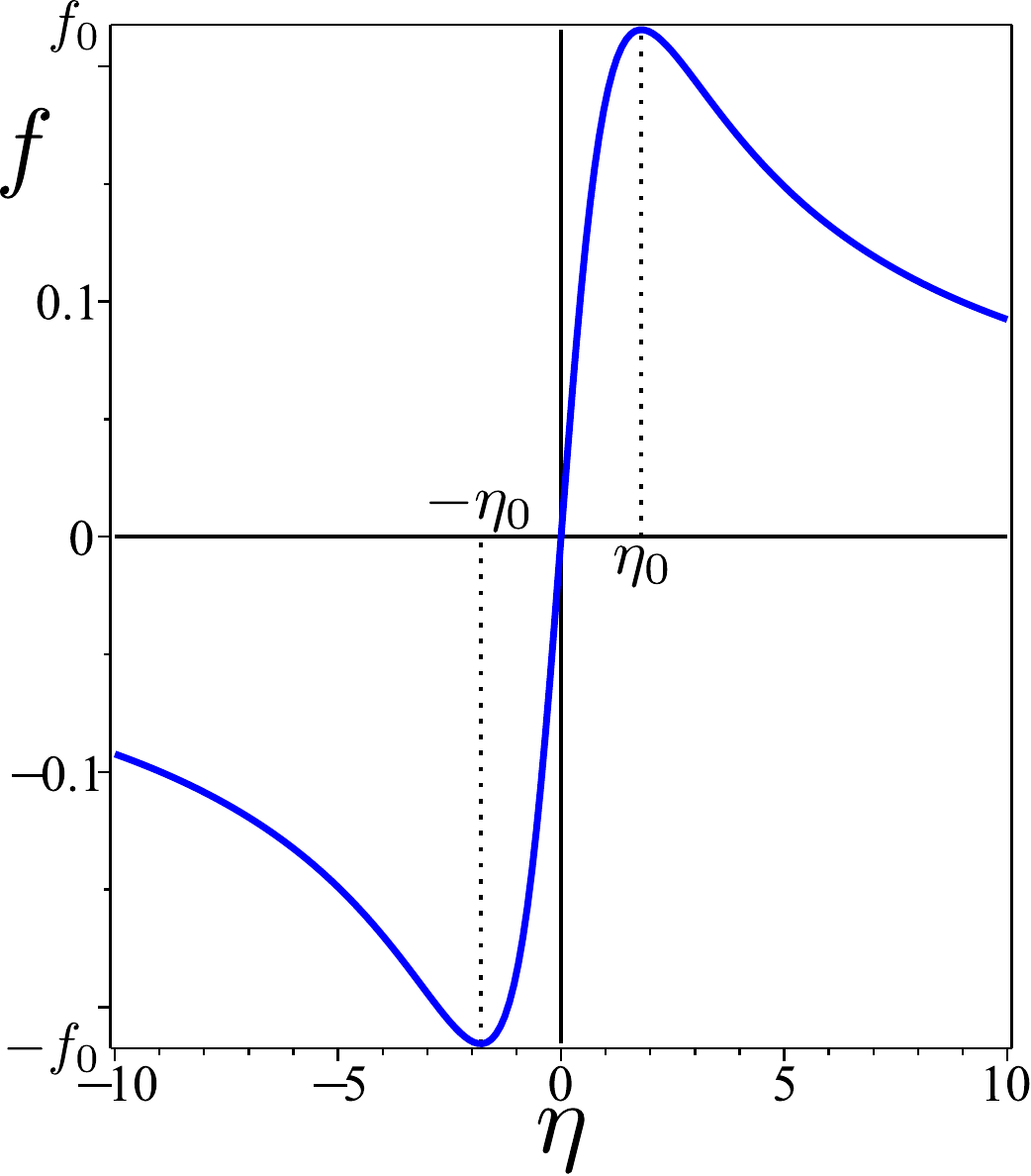}
	\caption{Graph of the odd function $f(\eta)$. It is bounded between $-f_0$ and $+f_0$, which are the values attained by $f$ at $\eta=\mp\eta_0$, respectively. The numerical values are $\eta_0\doteq1.80$ and $f_0\doteq0.22$, which correspond to $u_0=-\sqrt{2/3}$ in \eqref{eq:f_parametric}.}
	\label{fig:k_prime_rl}
\end{figure}

\subsection{Damped Shock Waves}\label{sec:beta}
A critical aspect of our analysis is to identify every point $(t,x)$ reached by a characteristic of one family as the end-point, at time $t$, of a characteristic in the other family. This identification is formalized by the functions $\beta(t,\alpha)$ and $\alpha(t,\beta)$ defined as follows.
\begin{definition}
Every point $(t,x)$ reached by a forward characteristic $x_1(t,\alpha)$ can be seen as the end-point at time $t$ of a backward characteristic originating at $\beta(t,\alpha)$, where $\beta(t,\alpha)$ is formally defined by 
\begin{subequations}\label{eq:alpha_beta_implicit_definitions}
	\begin{equation}
	\label{eq:beta_of_alpha_implicit_definition}
	x_1(t,\alpha)=x_2(t,\beta(t,\alpha)).
\end{equation}
Conversely, by exchanging the roles of forward and backward characteristics, the function $\alpha(t,\beta)$ is defined implicitly by
\begin{equation}
	\label{eq:alpha_of_beta_implicit_definition}
	x_2(t,\beta)=x_1(t,\alpha(t,\beta)).
\end{equation}
\end{subequations}
\end{definition}
Instrumental to the analysis of the mappings $t\mapsto\beta(t,\alpha)$ and $t\mapsto\alpha(t,\beta)$ are the wave \emph{infinitesimal compression ratios}, which can be defined as follows.
\begin{definition}
	\label{def:infinitesimal_compression_ratios}
	For every characteristic curve, $x=x_1(t,\alpha)$ and $x=x_2(t,\beta)$, corresponding to a smooth solution of \eqref{eq:diagonal_system}, the wave \emph{infinitesimal compression ratios}, $c_1$ and $c_2$, are defined by
	\begin{equation}
		\label{eq:infinitesimal_compression_ratios}
		c_1(t,\alpha):=\partial_\alpha x_1\quad\text{and}\quad c_2(t,\beta):=\partial_\beta x_2.
	\end{equation}
\end{definition}
These can be expressed in terms of the pair $(p,q)$ in \eqref{eq:y_q_def} and the function $f$ in \eqref{eq:f_definition}.
\begin{proposition}
\label{th:infinitesimal_compression_ratios}
If the pair $(r,\ell)$ is a local solution of class $\mathcal{C}^1$ of the system \eqref{eq:diagonal_system} in $[0,\tast)\times\mathbb{R}$, then
the wave infinitesimal compression ratios $c_1$ and $c_2$ are given by
\begin{subequations}
\label{eq:infinitesimal_compression_ratios_expr}
\begin{align}
c_1(t,\alpha)=&\sqrt{\frac{k(\eta(t,x_1(t,\alpha)))}{k(2r_0(\alpha))}}\mathrm{e}^{{\Huge \int_0^tA(\tau)^{-1}f(\eta(\tau,x_1(\tau,\alpha)))p(\tau,x_1(\tau,\alpha))\dd\tau}}>0,\label{eq:infinitesimal_compression_ratios1_expr}\\
c_2(t,\beta)=&\sqrt{\frac{k(\eta(t,x_2(t,\beta)))}{k(2r_0(\beta))}}\mathrm{e}^{{\Huge \int_0^tA(\tau)^{-1}f(\eta(\tau,x_2(\tau,\beta)))q(\tau,x_2(\tau,\beta))\dd\tau}}>0.\label{eq:infinitesimal_compression_ratios2_expr}
\end{align}
\end{subequations} 
\end{proposition}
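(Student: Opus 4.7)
The plan is to derive an ODE in $t$ (along each characteristic) for $c_1$ and $c_2$, then integrate it. I focus on $c_1$; the argument for $c_2$ is perfectly symmetric, obtained by swapping the roles of forward and backward characteristics, of $r$ and $\ell$, and of $p$ and $q$.

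Starting from the definition \eqref{eq:x_1} of the forward characteristic and differentiating both sides with respect to $\alpha$, I interchange $\partial_\alpha$ and $d/dt$ and use the chain rule to obtain
\begin{equation*}
\frac{d}{dt}c_1(t,\alpha)=k'(\eta(t,x_1(t,\alpha)))\,\partial_x\eta(t,x_1(t,\alpha))\,c_1(t,\alpha),\qquad c_1(0,\alpha)=1.
\end{equation*}
This is a linear ODE in $c_1$, whose solution is an exponential of the integral of the coefficient. The whole task is then to rewrite $k'(\eta)\,\partial_x\eta$, evaluated along $x_1(\cdot,\alpha)$, in a form that produces the expression in the proposition.

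The key step, and the one I expect to be the main obstacle, is to handle the $\partial_x\ell$ contribution to $\partial_x\eta=\partial_xr-\partial_x\ell$ along the \emph{forward} characteristic, because the definition \eqref{eq:q_def} of $q$ naturally refers to the \emph{backward} characteristic instead. The trick is to use the identity $\partial^+\eta=-2k(\eta)\,\partial_x\ell$ from \eqref{eq:partial_plus_minus_u1}, which gives
\begin{equation*}
-k'(\eta)\,\partial_x\ell=\frac{k'(\eta)}{2k(\eta)}\,\partial^+\eta=\frac{1}{2}\frac{d}{dt}\ln k(\eta(t,x_1(t,\alpha))),
\end{equation*}
so the $\partial_x\ell$ piece is an exact derivative along $x_1$. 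The remaining piece, $k'(\eta)\,\partial_x r$, is rewritten using \eqref{eq:y_def} as
\begin{equation*}
k'(\eta)\,\partial_x r=\frac{k'(\eta)}{\sqrt{k(\eta)}}\cdot\frac{p}{A(t)}=A(t)^{-1}f(\eta)\,p,
\end{equation*}
by the definition \eqref{eq:f_definition} of $f$.

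Finally, I integrate the resulting ODE from $0$ to $t$. The exact-derivative term contributes $\tfrac{1}{2}\ln\bigl(k(\eta(t,x_1(t,\alpha)))/k(\eta(0,\alpha))\bigr)$, and since $\ell_0=-r_0$ by \eqref{eq:IC_riemann} one has $\eta(0,\alpha)=r_0(\alpha)-\ell_0(\alpha)=2r_0(\alpha)$; this produces the square-root prefactor in \eqref{eq:infinitesimal_compression_ratios1_expr}. Exponentiating, and using $c_1(0,\alpha)=1$, yields \eqref{eq:infinitesimal_compression_ratios1_expr}. Positivity of $c_1$ is automatic from the exponential representation. The formula \eqref{eq:infinitesimal_compression_ratios2_expr} for $c_2$ follows by the same argument, now using $\partial^-\eta=-2k(\eta)\,\partial_x r$ from \eqref{eq:partial_plus_minus_u1} to turn the $\partial_x r$ contribution into the logarithmic derivative of $k(\eta)$ along $x_2$, and rewriting $\partial_x\ell$ via \eqref{eq:q_def}.
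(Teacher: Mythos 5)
Your proof is correct and follows essentially the same route as the paper's: differentiate the characteristic ODE in $\alpha$ to get a linear ODE for $c_1$, split $\partial_x\eta$ into a $\partial_x r$ piece (rewritten as $A^{-1}f(\eta)p$) and a $\partial_x\ell$ piece (rewritten as the exact derivative $\tfrac12\tfrac{d}{dt}\ln k(\eta)$ via $\partial^+\eta=-2k\,\partial_x\ell$), then integrate. The paper merely packages $\tfrac12\ln k$ as an auxiliary function $H$ before integrating, which is a cosmetic difference.
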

The proof of Proposition~\ref{th:infinitesimal_compression_ratios} is given in Appendix~\ref{sec:aux_res}. 
\begin{remark}
	\label{rkm:no_overlapping}
	As long as both $r$ and $\ell$ are of class $\mathcal{C}^1$, inequalities \eqref{eq:infinitesimal_compression_ratios_expr} remain valid, and \emph{vice versa}, meaning that characteristics in the same family do \emph{not} crash on one another. 
\end{remark}
Since both $c_1$ and $c_2$  are always positive for regular solutions, we can then derive the following properties for the mappings $\beta=\beta(t,\alpha)$ and $\alpha=\alpha(t,\beta)$. This result is proved in Appendix~\ref{sec:aux_res}
\begin{proposition}
\label{prop:beta_t_alpha}
For any fixed $\alpha\in\mathbb{R}$ and $\beta\in\mathbb{R}$, equations \eqref{eq:alpha_beta_implicit_definitions} have a unique solution, $\beta=\beta(t,\alpha)$ and $\alpha=\alpha(t,\beta)$, respectively. The mappings $t\mapsto\beta(t,\alpha)$ and $t\mapsto\alpha(t,\beta)$ are both of class $\mathcal{C}^1$, strictly increasing and strictly decreasing, respectively, and such that
\begin{equation}
\label{eq:d_t_beta}
\partial_{t}\beta(t,\alpha)=\frac{2 k(\eta(t,x_1(t,\alpha)))}{c_2(t,\beta(t,\alpha))}>0 \quad \hbox{ and } \quad \partial_{t}\alpha(t,\beta)=-\frac{2k(\eta(t,x_2(t,\beta)))}{c_1(t,\alpha(t,\beta))}<0
\end{equation}
Moreover, they obey the following bounds
\begin{equation}
\label{eq:beta_confined}
\alpha\leq\beta(t,\alpha)\leq\alpha+2\delta t, \quad \hbox{and} \quad \beta-2\delta t\leq\alpha(t,\beta)\leq\beta,
\end{equation}
where $\delta$ is defined in \eqref{eq:estimates}.
\end{proposition}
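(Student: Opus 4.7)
The plan is to build the mappings $\beta(t,\alpha)$ and $\alpha(t,\beta)$ by inverting the spatial variables in the characteristic maps at fixed time, and then read off the derivative formulas via the implicit function theorem. The first order of business is to promote the local positivity $c_1=\partial_\alpha x_1>0$ and $c_2=\partial_\beta x_2>0$ (supplied by Proposition~\ref{th:infinitesimal_compression_ratios}) into global bijectivity of $\alpha\mapsto x_1(t,\alpha)$ and $\beta\mapsto x_2(t,\beta)$ on $\mathbb{R}$. Starting from the integral form of \eqref{eq:x_1}--\eqref{eq:x_2} and using $1\le k\le\delta$ from \eqref{eq:estimates}, I obtain the elementary envelope
\begin{equation*}
\alpha + t \le x_1(t,\alpha) \le \alpha+\delta t,\qquad \beta-\delta t\le x_2(t,\beta)\le \beta-t,
\end{equation*}
so that each map is a $\mathcal{C}^1$ strictly increasing bijection of $\mathbb{R}$ onto $\mathbb{R}$. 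Unique solvability of \eqref{eq:beta_of_alpha_implicit_definition} and \eqref{eq:alpha_of_beta_implicit_definition} is then immediate.

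For regularity and the explicit derivatives, I would apply the implicit function theorem to $F(t,\alpha,\beta):=x_2(t,\beta)-x_1(t,\alpha)$: since $F$ is of class $\mathcal{C}^1$ and $\partial_\beta F=c_2>0$, the solution $\beta=\beta(t,\alpha)$ is of class $\mathcal{C}^1$ and
\begin{equation*}
\partial_{t}\beta(t,\alpha)=-\frac{\partial_{t}F}{\partial_\beta F}=\frac{k(\eta(t,x_2(t,\beta)))+k(\eta(t,x_1(t,\alpha)))}{c_2(t,\beta(t,\alpha))}.
\end{equation*}
The key observation — and the only non-bookkeeping step — is that at $\beta=\beta(t,\alpha)$ the defining identity $x_2(t,\beta(t,\alpha))=x_1(t,\alpha)$ forces $\eta$ to be evaluated at the \emph{same} space-time point in both $k$-factors, collapsing the numerator to $2k(\eta(t,x_1(t,\alpha)))$ and recovering exactly the first formula in \eqref{eq:d_t_beta}. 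Positivity of $\partial_t\beta$ (hence strict monotonicity of $t\mapsto\beta(t,\alpha)$) follows from $k\ge 1$ and $c_2>0$. The treatment of $\alpha(t,\beta)$ is entirely symmetric via $G(t,\alpha,\beta):=x_1(t,\alpha)-x_2(t,\beta)$, the minus sign of $\partial_t\alpha$ arising because now $\partial_t G=+2k(\eta(t,x_2(t,\beta)))>0$.

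For the bounds in \eqref{eq:beta_confined}, the lower estimate $\alpha\le\beta(t,\alpha)$ follows from $\beta(0,\alpha)=\alpha$ (by the initial conditions in \eqref{eq:x_1}--\eqref{eq:x_2}) together with $\partial_t\beta>0$ proved above. For the upper estimate I substitute $x_1(t,\alpha)=x_2(t,\beta(t,\alpha))$ into the two integral representations and rearrange to
\begin{equation*}
\beta(t,\alpha)-\alpha=\int_0^t k(\eta(\tau,x_1(\tau,\alpha)))\,d\tau+\int_0^t k(\eta(\tau,x_2(\tau,\beta(t,\alpha))))\,d\tau\le 2\delta t,
\end{equation*}
and the companion bounds for $\alpha(t,\beta)$ follow from the analogous identity. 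I expect no serious obstacle here: the proof is essentially bookkeeping organized around the implicit function theorem, and the only point that repays real attention is the evaluation of $\eta$ at a \emph{common} space-time point in the second step, which is precisely what makes the formulas in \eqref{eq:d_t_beta} as clean as they are.
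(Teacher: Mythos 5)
Your proof is correct. It takes a mildly different route from the paper's, mainly in how it establishes existence and uniqueness of $\beta(t,\alpha)$: you prove that for fixed $t$ the map $\beta\mapsto x_2(t,\beta)$ is a global $\mathcal{C}^1$-diffeomorphism of $\mathbb{R}$ (strict monotonicity from $c_2>0$, surjectivity from the envelope bounds $\beta-\delta t\le x_2(t,\beta)\le\beta-t$), and then invert; the paper instead constructs $\beta(t,\alpha)$ directly by solving the backward Cauchy problem $\dot y_2(\tau)=-k(\eta(\tau,y_2(\tau)))$ with terminal condition $y_2(t)=x_1(t,\alpha)$ and sets $\beta(t,\alpha):=y_2(0)$, so uniqueness falls out of ODE uniqueness without appealing to global bijectivity at all. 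Your route is slightly more work (you must supply surjectivity) but yields a somewhat stronger structural fact (the characteristic maps are global diffeomorphisms at each time). The derivative formula \eqref{eq:d_t_beta} is obtained essentially identically in both treatments — your implicit-function-theorem phrasing and the paper's "differentiate \eqref{eq:beta_of_alpha_implicit_definition}" are the same computation — and you correctly flag the crucial point that the two $k$-terms collapse because, at $\beta=\beta(t,\alpha)$, both $x_1$ and $x_2$ sit at the same space-time location. For the bound \eqref{eq:beta_confined}, your identity
\begin{equation*}
\beta(t,\alpha)-\alpha=\int_0^t k(\eta(\tau,x_1(\tau,\alpha)))\,\dd\tau+\int_0^t k(\eta(\tau,x_2(\tau,\beta(t,\alpha))))\,\dd\tau
\end{equation*}
is a clean way to get $\beta(t,\alpha)\le\alpha+2\delta t$ (and in fact the sharper $\ge\alpha+2t$, more than the stated $\ge\alpha$), whereas the paper bounds the two integral representations separately and then subtracts; these are interchangeable.
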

\begin{remark}
	\label{rmk:alpha_beta_nesting}
	It readily follows from \eqref{eq:beta_confined} that $\beta(0,\alpha)=\alpha$ and  $\alpha(0,\beta)=\beta$.
\end{remark}
\begin{definition}
	\label{def:degeneracy}
	For a given value of $\lambda>0$, we say that a \emph{shock wave} is formed when either $p$ or $q$ in \eqref{eq:y_q_def} diverges along one family of characteristics. Specifically, this requires either that there is $\alpha\in\mathbb{R}$ and a finite time $t^\ast(\lambda,\alpha)$ such that
\begin{equation}
		\label{eq:compressive_x1}
		p(t,x_1(t,\alpha))\to \sgn(p(0,\alpha))\infty \quad\text{as}\quad t\to t^\ast(\lambda,\alpha),
	\end{equation}
or that there is $\beta\in\mathbb{R}$ and a finite time $t^\ast(\lambda,\beta)$ such that
\begin{equation}
		\label{eq:compressive_x1}
		q(t,x_2(t,\beta))\to \sgn(q(0,\beta))\infty \quad\text{as}\quad t\to t^\ast(\lambda,\beta).
	\end{equation}
	By \eqref{eq:y_q_def}, this in turn implies that either $\partial_{x}r(t,x_1(t,\alpha))$ diverges as $t\to t^\ast(\lambda,\alpha)$ or  $\partial_{x}\ell(t,x_2(t,\beta))$ diverges as $t\to t^\ast(\lambda,\beta)$.
	Finally, according to \eqref{eq:riemann_system} and \eqref{eq:first_order_unknowns}, as a shock wave arises, the second derivatives of $w$ diverge, while its first derivatives develop a discontinuity.
\end{definition}
\begin{remark}
	\label{rmk:merging_forward_characteristics}
	In the inviscid case, a shock wave arises in conjunction with the degeneracy of characteristics, that is, when either $c_1$ or $c_2$ vanishes  \cite{paparini:singular}. This condition implies by \eqref{eq:infinitesimal_compression_ratios} that either $\partial_{x}r$ or $\partial_{x}\ell$ must diverge accordingly. However, the converse does not necessarily hold in this case.
\end{remark}
\begin{remark}
	\label{rmk:economy}
	Not to burden our presentation with too many case distinctions, we shall preferentially focus on the divergence of $p$ along forward characteristics. When both $p$ and $q$ diverge, the critical time $\tast$ is the \emph{least} between $t^\ast(\lambda,\alpha)$ and $t^\ast(\lambda,\beta)$, for all admissible $\alpha$ and $\beta$ for which these times exist.
\end{remark}
In the following section, we shall derive estimates for $t^\ast(\lambda,\alpha)$ and record without explicit proof the corresponding estimates for $t^\ast(\lambda,\beta)$.

\section{Critical time estimates}\label{sec:critical_time_estimates}
By applying the method illustrated in the preceding section, we shall determine a range of values of $\lambda>0$ for which singularities in the smooth solutions of problem \eqref{eq:wave_system} occur for a wide class of initial data $\twist_0$. We shall provide an estimate for the critical time $\tast$, which  depends only on $\lambda$ and $\twist_0$ (through $r_0$).

\subsection{Auxiliary Lemmas}
The following lemma asserts that along forward characteristic curves $x=x_1(t,\alpha)$ departing from values of $\alpha\in\mathbb{R}$ where specific properties of the initial condition $r_0$ in \eqref{eq:IC_riemann} are satisfied, the function $\eta=r-\ell$ remains confined between two functions of time determined by  $r_0$.
\begin{lemma}
\label{lemma:u_1_bounded}
Let $\lambda>0$ and let the pair $(r,\ell)$ be a solution of class $\mathcal{C}^1$ of the system \eqref{eq:diagonal_system} for $t\in[0,\tast)$ with initial condition $r_0(x)=-\ell_0(x)$ in \eqref{eq:IC_riemann}.
 Assume that $r_0$ is bounded and has a finite limit as $x\to+\infty$,
\begin{equation}
\label{eq:r_0_infty}
\lim_{x\to+\infty}r_0(x)=:r_0(+\infty)\in\mathbb{R}.
\end{equation}
If there exists at least one $\alpha\in\mathbb{R}$ such that $r_0$ satisfies both
\begin{equation}\label{eq:sufficient_initial_conditions}
\sgn\left(r_0'(\alpha)\left(r_0(\alpha)+r_0(+\infty)\right)\right)=-1\quad\text{and}\quad
\sgn\left(r_0'(x)r_0(\alpha)\right)=-1 \quad \text{for every}\ x\geq\alpha,
\end{equation}
then, along the characteristic curve $x=x_1(t,\alpha)$,  $\eta(t,x)$ is confined between two functions,
\begin{equation}
\label{eq:u_1_bar_alpha_bounded}
\begin{cases}
	\etal(t,\alpha)\leq\eta(t,x_1(t,\alpha))\leq\etau(t,\alpha)&\emph{if}\  r_0(\alpha)>0,\\
		\etau(t,\alpha)\leq\eta(t,x_1(t,\alpha))\leq\etal(t,\alpha)&\emph{if}\  r_0(\alpha)<0,
\end{cases}
\end{equation}
where
\begin{align}
\etal(t,\alpha)&:=A(t)^{-1}\left(r_0(\beta(t,\alpha))+r_0(\alpha)\right)+2 r_0(\beta(t,\alpha))\left(1-A(t)^{-1}\right),\label{eq:eta_bar_l_definitions}\\
\etau(t,\alpha)&:=A(t)^{-1}\left(r_0(\beta(t,\alpha))+r_0(\alpha)\right)+2 r_0(\alpha)\left(1-A(t)^{-1}\right).\label{eq:eta_bar_u_definitions} 
\end{align}
\end{lemma}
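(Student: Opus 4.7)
The plan is to derive a closed integral identity for $\eta$ along the forward characteristic $x_1(\cdot,\alpha)$, recast the target bounds \eqref{eq:u_1_bar_alpha_bounded} as containment of a weighted time-average of an auxiliary function, and finally propagate the containment in time by a bootstrap argument that exploits the sign/monotonicity information in \eqref{eq:sufficient_initial_conditions} together with Propositions \ref{prop:beta_t_alpha} and Lemma~\ref{lemma:r_l_bounded}.

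First, I evaluate the two identities in \eqref{eq:A_r_l_int}: the first along the forward characteristic $x_1(\cdot,\alpha)$, and the second along the backward characteristic $x_2(\cdot,\beta(t,\alpha))$, which by \eqref{eq:beta_of_alpha_implicit_definition} reaches $x_1(t,\alpha)$ precisely at time $t$. Subtracting, and using $\ell_0=-r_0$ from \eqref{eq:IC_riemann}, I obtain
\begin{equation*}
A(t)\,\eta(t,x_1(t,\alpha)) \;=\; \bigl[r_0(\alpha)+r_0(\beta(t,\alpha))\bigr] + \frac{\lambda}{2}\int_0^t A(\tau)\,\Psi(\tau)\,\dd\tau,
\end{equation*}
where $\Psi(\tau):=r(\tau,x_2(\tau,\beta(t,\alpha)))-\ell(\tau,x_1(\tau,\alpha))$. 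Combining the definitions \eqref{eq:A_def}, \eqref{eq:eta_bar_l_definitions}, \eqref{eq:eta_bar_u_definitions} with the identity $A(t)-1=(\lambda/2)\int_0^t A(\tau)\,\dd\tau$, a short algebraic manipulation shows that the target bounds \eqref{eq:u_1_bar_alpha_bounded} are equivalent to the (oriented) containment
\begin{equation*}
\frac{\int_0^t A(\tau)\,\Psi(\tau)\,\dd\tau}{\int_0^t A(\tau)\,\dd\tau} \; \text{lies between} \; 2r_0(\alpha) \; \text{and} \; 2r_0(\beta(t,\alpha)),
\end{equation*}
i.e.\ the $A$-weighted time-mean of $\Psi$ must stay trapped between the initial-value endpoints.

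At $\tau=0$ one has $\Psi(0)=r_0(\alpha)+r_0(\beta(t,\alpha))$. The monotonicity condition in \eqref{eq:sufficient_initial_conditions} says that on $[\alpha,\infty)$ the sign of $r_0'$ is opposite to that of $r_0(\alpha)$, so $r_0$ is strictly monotone on $[\alpha,\infty)$; the limit condition on $r_0(+\infty)$ then forces $r_0(\beta(t,\alpha))$ to lie strictly between $r_0(+\infty)$ and $r_0(\alpha)$ (the inclusion $\alpha\leq\beta(t,\alpha)$ being provided by Proposition \ref{prop:beta_t_alpha}), with $r_0(\beta(t,\alpha))$ keeping the sign of $r_0(\alpha)$. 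Consequently $\Psi(0)$ already lies in the required interval and \eqref{eq:u_1_bar_alpha_bounded} holds with equality at $t=0$. The case $r_0(\alpha)<0$ is symmetric.

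To propagate the bounds for $t>0$ I plan a continuity/bootstrap argument. Let $t^\flat$ be the supremum of times up to which \eqref{eq:u_1_bar_alpha_bounded} holds, assume $t^\flat<\tast$, and work at the first touching time. At $t^\flat$ one has $\Psi(t^\flat)=\eta(t^\flat,x_1(t^\flat,\alpha))$ equal to either $\etau(t^\flat,\alpha)$ or $\etal(t^\flat,\alpha)$; differentiating the displayed identity in $t$ by Leibniz' rule, and using the strict monotonicity $\partial_t\beta(t,\alpha)>0$ from Proposition \ref{prop:beta_t_alpha} together with the strict sign of $r_0'(\beta)$ on $[\alpha,\infty)$ imposed by \eqref{eq:sufficient_initial_conditions}, the sign of the derivative of the discrepancy $A\eta-A\etau$ (or $A\eta-A\etal$) at $t^\flat$ turns out to be incompatible with a first violation, contradicting the definition of $t^\flat$. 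The main obstacle I foresee is exactly this differential step: $\beta(t,\alpha)$ enters the identity \emph{both} explicitly (through $r_0(\beta(t,\alpha))$) and implicitly (through $x_2(\tau,\beta(t,\alpha))$ inside the integrand), so extracting the correct sign requires a careful combined use of the strict monotonicity of $t\mapsto\beta(t,\alpha)$ from Proposition \ref{prop:beta_t_alpha}, the monotonicity of $r_0$ on $[\alpha,\infty)$ from \eqref{eq:sufficient_initial_conditions}, and the uniform $L^\infty$ bound \eqref{eq:bounded_r_l_plus_minus_abs} on $r+\ell$ provided by Lemma \ref{lemma:r_l_bounded}.
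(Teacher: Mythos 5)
Your reduction of the target bounds to a statement about the $A$-weighted time average of the integrand $\Psi_t(\tau):=r(\tau,x_2(\tau,\beta(t,\alpha)))-\ell(\tau,x_1(\tau,\alpha))$ is correct, and so is your verification at $t=0$. However, the bootstrap step that is supposed to propagate the containment does not close, and the obstacle you flag at the end is not a mere technicality but a structural gap. When you differentiate the identity
\begin{equation*}
A(t)\,\eta(t,x_1(t,\alpha)) = r_0(\alpha)+r_0(\beta(t,\alpha))+\frac{\lambda}{2}\int_0^t A(\tau)\,\Psi_t(\tau)\,\dd\tau
\end{equation*}
in $t$ at the first touching time $t^\flat$, Leibniz' rule produces, besides the boundary term $\tfrac{\lambda}{2}A(t^\flat)\,\Psi_{t^\flat}(t^\flat)$ and the derivative of $r_0(\beta(t,\alpha))$, the term $\tfrac{\lambda}{2}\int_0^{t^\flat}A(\tau)\,\partial_t\Psi_t(\tau)\,\dd\tau$. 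Since $\partial_t\Psi_t(\tau)=\partial_x r\bigl(\tau,x_2(\tau,\beta(t,\alpha))\bigr)\,c_2(\tau,\beta(t,\alpha))\,\partial_t\beta(t,\alpha)$, this term has the sign of $\partial_x r$ at an interior point of the characteristic backward fan, not of $r_0'$. Nothing in \eqref{eq:sufficient_initial_conditions}, Proposition~\ref{prop:beta_t_alpha}, or Lemma~\ref{lemma:r_l_bounded} controls the sign or the size of $\partial_x r$ at those points --- indeed this is precisely the quantity whose blow-up the subsequent theorem is trying to detect --- so the boundary term of the ``right'' sign can be overwhelmed and the contradiction argument fails. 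Your bootstrap hypothesis constrains $\Psi_t(\tau)$ only weakly and says nothing about $\partial_t\Psi_t(\tau)$.

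The paper circumvents this difficulty by never differentiating the identity in $t$. Instead it iterates the integral relation at a \emph{fixed} $t$: the integrand $\Psi_t(\tau)$ is itself expanded via the same formula, producing a ``zeroth-order'' contribution $r_0(\alpha(\tau,\beta(t,\alpha)))+r_0(\beta(\tau,\alpha))$ that \emph{is} bounded between $2r_0(\beta(t,\alpha))$ and $2r_0(\alpha)$ by the monotonicity of $r_0$ on $[\alpha,\beta(t,\alpha)]$ (using the nesting of the intermediate $\alpha(\cdot,\cdot)$ and $\beta(\cdot,\cdot)$ from Proposition~\ref{prop:beta_t_alpha}), plus a remainder controlled crudely by $2\|r_0\|_\infty$ via Lemma~\ref{lemma:r_l_bounded}. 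Repeating this indefinitely, the ``good'' contributions assemble the exponential series for $A(t)$ while the remainder's coefficient vanishes in the limit, yielding exactly \eqref{eq:u_1_bar_alpha_bounded}. If you want to pursue your route, you would need a mechanism to replace derivative control of $\Psi_t$; the iteration is precisely such a mechanism, so at this point the two approaches effectively merge rather than being genuinely distinct.
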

\begin{proof}
For the proof of this lemma, we focus on one of the alternatives contemplated in \eqref{eq:sufficient_initial_conditions}, that is,
\begin{equation}
\label{eq:sufficient_initial_condition_focus00}
r_0(\alpha)>0, \quad r_0(\alpha)+r_0(+\infty)>0\quad\text{and}\quad 
r_0'(x)<0 \quad \text{for every}\ x\geq\alpha.
\end{equation}
The other alternative can be treated in exactly the same way.
The proof employs a recursive argument to  improve progressively lower and upper bounds on the function
\begin{equation}
\label{eq:F_t}
F(t):=A(t)\eta(t,x_1(t,\alpha))=A(t)\left\{r(t,x_1(t,\alpha))-\ell(t,x_1(t,\alpha))\right\}.
\end{equation}
By Proposition~\ref{prop:beta_t_alpha}, the point $(t,x_1(t,\alpha))$, where $\ell$ is to be evaluated, is the end-point at time $t$ of the backward characteristic starting from $\beta(t,\alpha)$. Hence, by \eqref{eq:A_r_l}, 
\begin{align}
\label{eq:A_r_l_lambda}
F(t)&=A(t)\left\{r(t,x_1(t,\alpha))-\ell(t,x_2(t,\beta(t,\alpha)))\right\}\nonumber\\
&=r_0(\alpha)-\ell_0(\beta(t,\alpha))+\frac{\lambda}{2}\int_0^tA(\tau)\left\{r(\tau,x_2(\tau,\beta(t,\alpha)))-\ell(\tau,x_1(\tau,\alpha))\right\}\dd\tau.
\end{align}
This identity is instrumental to our recursive method.
\paragraph*{Step $0$.}
We start by rewriting equation \eqref{eq:A_r_l_lambda} as
\begin{equation}
\label{eq:result_step0}
F(t)=F_0(t)+\frac{\lambda}{2}\int_0^tF^{(1)}(t,\tau)\dd\tau,
\end{equation}
where
\begin{subequations}
\label{eq:ingredient_step0}
\begin{align}
F_0(t)&:=r_0(\alpha)-\ell_0(\beta(t,\alpha)),\label{eq:62_a}\\
F^{(1)}(t,\tau)&:=A(\tau)\left\{r(\tau,x_2(\tau,\beta(t,\alpha)))-\ell(\tau,x_1(\tau,\alpha))\right\}.\label{eq:62_b}
\end{align}
\end{subequations}
By Lemma~\ref{lemma:r_l_bounded} and Remark~\ref{rmk:6}, we have that 
\begin{equation}\label{eq:transition_formula}
|F^{(1)}(\tau,t)|\leq 2A(\tau)||r_0||_{\infty}, \quad \hbox{ for every } \tau\in[0,t].
\end{equation}
By using \eqref{eq:transition_formula} in \eqref{eq:result_step0}, we obtain the following upper and lower bounds for  $F(t)$,
\begin{equation}
\label{eq:A_r_l_lambda_first}
|F(t)-F_0(t)|\leq 2||r_0||_\infty\left(A(t)-1\right),
\end{equation}
where \eqref{eq:A_def} has also been used. By \eqref{eq:A_def}, 
\eqref{eq:A_r_l_lambda_first} provides a first estimate for upper and lower bounds on $F$. 
To refine this estimate we proceed recursively. We know from \eqref{eq:A_r_l} that along the characteristic curves $x_1(\cdot,\alpha)$ and $x_2(\cdot,\beta)$ through the points $(t,x)$ and $(t,y)$, 
\begin{equation}
\label{eq:A_r_l_xy}
\begin{cases}
A(t)r(t,x)=r_0(\alpha)-\frac{\lambda}{2}\int_0^{t}A(\tau)\ell(\tau,x_1(\tau,\alpha))\dd\tau,\\
A(t)\ell(t,y)=\ell_0(\beta)-\frac{\lambda}{2}\int_0^{t}A(\tau)r(\tau,x_2(\tau,\beta))\dd\tau,
\end{cases}
\end{equation}
from which it follows that
\begin{equation}
\label{eq:sum_r_lproof}
A(t)\left(r(t,x)-\ell(t,y)\right)=r_0(\alpha)-\ell_0(\beta)+\frac{\lambda}{2}\int_0^{t}A(\tau)\left\{r(\tau,x_2(\tau,\beta))-\ell(\tau,x_1(\tau,\alpha))\right\}\dd\tau.
\end{equation}
\paragraph*{Step $1$.}
We apply equation \eqref{eq:sum_r_lproof} to the integrand $F^{(1)}(t,\tau)$ in \eqref{eq:result_step0}:
$x_1(\tau,\alpha)$ and $x_2(\tau,\beta(t,\alpha))$ can be seen as end-points at time $\tau\in[0,t]$ of the characteristic curves $x_2(\cdot,\beta(\tau,\alpha))$ and $x_1(\cdot,\alpha(\tau,\beta(t,\alpha)))$, respectively, where both $\beta(\tau,\alpha)$ and $\alpha(\tau,\beta(t,\alpha))$ range between $\alpha$ and $\beta(t,\alpha)$ for  $\tau\in[0,t]$. Thus, we can write
\begin{align}
\label{eq:A_r_l_lambda_tau}
F^{(1)}(t,\tau)&=A(\tau)\left\{r(\tau,x_1(\tau,\alpha(\tau,\beta(t,\alpha))))-\ell(\tau,x_2(\tau,\beta(\tau,\alpha)))\right\}\nonumber\\
&=r_0(\alpha(\tau,\beta(t,\alpha)))-\ell_0(\beta(\tau,\alpha))+\frac{\lambda}{2}\int_0^\tau A(\xi)\left\{r(\xi,x_2(\xi,\beta(\tau,\alpha)))-\ell(\xi,x_1(\xi,\alpha(\tau,\beta(t,\alpha))))\right\}\dd\xi.
\end{align}
By use of \eqref{eq:A_r_l_lambda_tau} in \eqref{eq:result_step0}, we arrive at
\begin{equation}
\label{eq:result_step1}
F(t)=F_0(t)+\frac{\lambda}{2}\int_0^t\left[F_0^{(1)}(t,\tau)+\frac{\lambda}{2}\int_0^\tau F^{(2)}(t,\tau,\xi)\dd\xi\right]\dd\tau,
\end{equation}
where
\begin{subequations}
\label{eq:ingredient_step1}
\begin{align}
F_0^{(1)}(t,\tau)&:=r_0(\alpha(\tau,\beta(t,\alpha)))-\ell_0(\beta(\tau,\alpha)),\label{eq:ingredient_step1_1}\\
F^{(2)}(t,\tau,\xi)&:= A(\xi)\left\{r(\xi,x_2(\xi,\beta(\tau,\alpha)))-\ell(\xi,x_1(\xi,\alpha(\tau,\beta(t,\alpha))))\right\}.\label{eq:ingredient_step1_2}
\end{align}
\end{subequations}
We now estimate $F_0^{(1)}(t,\tau)$ for $\tau\in[0,t]$. By  \eqref{eq:sufficient_initial_condition_focus00}, both $\beta(\tau,\alpha)$ and $\alpha(\tau,\beta(t,\alpha))$ range between $\alpha$ and $\beta(t,\alpha)>\alpha$ for  $\tau\in[0,t]$,  and since $\ell_0=-r_0$, by \eqref{eq:ingredient_step1_1},
\begin{equation}
\label{eq:F_1_0_t}
2r_0(\beta(t,\alpha))\leq F^{(1)}_0(t,\tau)\leq 2r_0(\alpha), \quad \hbox{ for every } \tau\in[0,t].
\end{equation}
Moreover, by applying Lemma~\ref{lemma:r_l_bounded} and Remark~\ref{rmk:6} to \eqref{eq:ingredient_step1_2}, we obtain that 
\begin{equation}
\label{eq:F_1_t}
|F^{(2)}(t,\tau,\xi)|\leq 2A(\xi)||r_0||_{\infty}, \quad \hbox{ for every } \xi\in[0,\tau], \hbox{ and } \tau\in[0,t].
\end{equation}
By inserting \eqref{eq:F_1_t} into \eqref{eq:result_step1} and using again \eqref{eq:A_def}, we find that 
\begin{align}
\label{eq:A_r_l_lambda_second}
F(t)-F_0(t)&\leq 2r_0(\alpha)\frac{\lambda}{2}\int_0^t\dd\tau+ 2||r_0||_\infty	\left[\frac{\lambda}{2}\int_0^t\frac{\lambda}{2}\left(\int_0^\tau A(\xi)\dd\xi \right)\dd\tau\right]\nonumber\\
&=2r_0(\alpha)\left(\frac{\lambda t}{2}\right)+2||r_0||_{\infty}\left(-1-\frac{\lambda t}{2}+A(t)\right)
\end{align}
and
\begin{align}
\label{eq:A_r_l_lambda_second_lower}
F(t) - F_0(t)&\geq2r_0(\beta(t,\alpha))\frac{\lambda}{2}\int_0^t\dd\tau- 2||r_0||_\infty	\left[\frac{\lambda}{2}\int_0^t\frac{\lambda}{2}\left(\int_0^\tau A(\xi)\dd\xi \right)\dd\tau\right]\nonumber\\
&=2r_0(\beta(t,\alpha))\left(\frac{\lambda t}{2}\right)-2||r_0||_{\infty}\left(-1-\frac{\lambda t}{2}+A(t)\right).
\end{align}
\paragraph*{Step $2$.}
We now apply formula \eqref{eq:sum_r_lproof} to  $F^{(2)}(t,\tau,\xi)$ in \eqref{eq:ingredient_step1_2}:
$x_1(\xi,\alpha(\tau,\beta(t,\alpha)))$ and $x_2(\xi,\beta(\tau,\alpha))$ can be seen as the end-points at time $\xi$ of the characteristic curves $x_2(\cdot,\beta(\xi,\alpha(\tau,\beta(t,\alpha))))$ and $x_1(\cdot,\alpha(\xi,\beta(\tau,\alpha)))$, with $\beta(\xi,\alpha(\tau,\beta(t,\alpha))\in[\alpha(\tau,\beta(t,\alpha)),\beta(t,\alpha)]$ and $\alpha(\xi,\beta(\tau,\alpha))\in[\alpha,\beta(\tau,\alpha)]$ as $\xi\in[0,\tau]$. Therefore, for $\tau$ in $[0,t]$, both $\beta(\xi,\alpha(\tau,\beta(t,\alpha)))$ and $\alpha(\xi,\beta(\tau,\alpha))$ lie within the interval $[\alpha, \beta(t,\alpha)]$.
Then,  in \eqref{eq:result_step1} we can write 
\begin{equation}
	\label{eq:integrand_step1_to2}
	F^{(2)}(t,\tau,\xi)=F_0^{(2)}(t,\tau,\xi)+\frac{\lambda}{2}\int_0^\xi F^{(3)}(t,\tau,\xi,\omega)\dd\omega,
\end{equation}
where
\begin{subequations}
	\label{eq:ingredient_step2}
	\begin{align}
		F_0^{(2)}(t,\tau,\xi)&:=r_0(\alpha(\xi,\beta(\tau,\alpha)))-\ell_0(\beta(\xi,\alpha(\tau,\beta(t,\alpha)))),\\
		F^{(3)}(t,\tau,\xi,\omega)&:= A(\omega)\left\{r(\omega,x_2(\omega,\beta(\xi,\alpha(\tau,\beta(t,\alpha)))))-\ell(\xi,x_1(\omega,\alpha(\xi,\beta(\tau,\alpha))))\right\},
	\end{align}
\end{subequations}
and \eqref{eq:result_step1} becomes
\begin{align}
	\label{eq:result_step2}
	F(t)&=F_0(t)+\frac{\lambda}{2}\int_0^t\left\{F_0^{(1)}(t,\tau)+\frac{\lambda}{2}\int_0^\tau \left[F_0^{(2)}(t,\tau,\xi)+\frac{\lambda}{2}\int_0^\xi F^{(3)}(t,\tau,\xi,\omega)\dd\omega\right]\dd\xi\right\}\dd\tau\nonumber\\
	&=F_0(t)+\frac{\lambda}{2}\int_0^t\left[F_0^{(1)}(t,\tau)+\frac{\lambda}{2}\int_0^\tau F_0^{(2)}(t,\tau,\xi) \dd\xi\right]\dd\tau
	+\frac{\lambda}{2}\int_0^t\frac{\lambda}{2}\int_0^\tau \frac{\lambda}{2} \int_0^\xi F^{(3)}(t,\tau,\xi,\omega)\dd\omega\dd\xi\dd\tau.
\end{align}
As above, by \eqref{eq:sufficient_initial_condition_focus00}, every $F_0^{(i)}$ in \eqref{eq:result_step2} ranges between $-2r_0(\alpha)$ and $-2r_0(\beta(t,\alpha))$, while, by Lemma~\ref{lemma:r_l_bounded} and Remark~\ref{rmk:6},  $|F_1^{(2)}(\omega,\xi,\tau,t)|$ is bounded by $2||r_0||_\infty$, and so
\begin{equation}
	\label{eq:A_r_l_third_second}
	F(t)-F_0(t)\leq
	2r_0(\alpha)\left(\frac{\lambda t}{2}+\frac{(\lambda t)^2}{8}\right)+2||r_0||_{\infty}\left(-1-\frac{\lambda}{2}t-\frac{(\lambda t)^2}{8}+A(t)\right)
\end{equation}
and
\begin{equation}
	\label{eq:A_r_l_lambda_third_lower}
	F(t)-F_0(t)\geq
	2r_0(\beta(t,\alpha))\left(\frac{\lambda t}{2}+\frac{(\lambda t)^2}{8}\right)-2||r_0||_{\infty}\left(-1-\frac{\lambda t}{2}-\frac{(\lambda t)^2}{8}+A(t)\right),
\end{equation}
where use of \eqref{eq:A_def} has again been made.

The steps illustrated  above outline already a clear pattern. By  repeated applications of this method, we easily reach any higher level of recursion.

\paragraph*{Step $N$.} Thus, for $N>2$, we arrive at
\begin{subequations}
	\label{eq:step_N}
\begin{equation}
\label{eq:A_r_l_lambda_infinity}
F(t)-F_0(t)\leq2r_0(\alpha)\left[\sum_{n=0}^N\left(\frac{\left(\frac{\lambda t}{2}\right)^n}{n!}\right)-1\right]+2||r_0||_{\infty}\left[-\sum_{n=0}^N\left(\frac{\left(\frac{\lambda t}{2}\right)^n}{n!}\right)+A(t)\right]
\end{equation}
and
\begin{equation}
\label{eq:A_r_l_lambda_infinity_lower}
F(t)-F_0(t)\geq2r_0(\beta(t,\alpha))\left[\sum_{n=0}^N\left(\frac{\left(\frac{\lambda t}{2}\right)^n}{n!}\right)-1\right]-2||r_0||_{\infty}\left[-\sum_{n=0}^N\left(\frac{\left(\frac{\lambda t}{2}\right)^n}{n!}\right)+A(t)\right].
\end{equation}
\end{subequations}
Taking in \eqref{eq:step_N}  the limit as $N\to\infty$ delivers
\begin{equation}
	\label{eq:step_N=infinity}
	2r_0(\beta(t,\alpha))(A(t)-1)\leq F(t)-F_0(t)\leq2r_0(\alpha)(A(t)-1),
\end{equation}
whence the inequalities in \eqref{eq:u_1_bar_alpha_bounded} follow easily, also by use of \eqref{eq:F_t} and \eqref{eq:62_a}.
\end{proof}
\begin{remark}
Conditions \eqref{eq:sufficient_initial_conditions} are precisely the ones that in   Corollary $1$ of \cite{paparini:singular} provide an estimate for the critical time in the inviscid case. In that case, $\eta$ is constant along characteristics and, since  $\ell_0=-r_0$, both $\etal$ and $\etau$ collapse onto $\eta$,
\begin{equation}
\label{eq:eta_l_u_lambda_0}
\eta(t,x_1(t,\alpha))=r_0(\alpha)+r_0(\beta(t,\alpha)),
\end{equation}
in accord with \eqref{eq:eta_bar_l_definitions} and \eqref{eq:eta_bar_u_definitions}, as $A\equiv1$ for $\lambda=0$. 
\end{remark}
To determine the sign of  $\eta(t,x_1(t,\alpha))$ as $t$ elapses, an additional hypothesis on the sign of $r_0(+\infty)$ is required in addition to conditions \eqref{eq:sufficient_initial_conditions}.
\begin{lemma}\label{lemma:sign_eta}
Let the pair $(r,\ell)$ be a solution of class $\mathcal{C}^1$ of the system \eqref{eq:diagonal_system} for $t\in[0,\tast)$ with initial condition $(r_0,\ell_0)$ such that $r_0(x)=-\ell_0(x)$ as in \eqref{eq:IC_riemann}.  Assume that $r_0$ is bounded and has a finite limit as $x\to+\infty$, as in \eqref{eq:r_0_infty}.
Assume further that there is at least one $\alpha\in\mathbb{R}$ such that, in addition to \eqref{eq:sufficient_initial_conditions}, $r_0$ satisfies
\begin{equation}
\label{eq:sufficient_initial_condition3_sec0}
\sgn(r_0(\alpha)r_0(+\infty))=+1 \quad \hbox{or} \quad r_0(+\infty)=0.
\end{equation}
Then,
\label{eq:u_1_alpha_bounded_sign}
\begin{equation}
\sgn(\eta(t,x_1(t,\alpha)\eta(0,\alpha))=+1\label{eq:sign_eta_x1}
\end{equation}
along the characteristic curve $x=x_1(t,\alpha)$.
\end{lemma}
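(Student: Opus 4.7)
The strategy is to leverage the two-sided bound obtained in Lemma~\ref{lemma:u_1_bounded} and show that, under the additional hypothesis \eqref{eq:sufficient_initial_condition3_sec0}, both bounding functions $\etal(t,\alpha)$ and $\etau(t,\alpha)$ defined in \eqref{eq:eta_bar_l_definitions}--\eqref{eq:eta_bar_u_definitions} share the common sign of $\eta(0,\alpha)$ for all $t\in[0,\tast)$. First, I would record the initialization: since $\ell_0=-r_0$, we have $\eta(0,\alpha)=2r_0(\alpha)$, so $\sgn(\eta(0,\alpha))=\sgn(r_0(\alpha))$; and by Remark~\ref{rmk:alpha_beta_nesting}, $\beta(0,\alpha)=\alpha$, consistent with $\etal(0,\alpha)=\etau(0,\alpha)=2r_0(\alpha)$.

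Second, I would treat the representative case $r_0(\alpha)>0$. From \eqref{eq:sufficient_initial_conditions}, the condition $\sgn(r_0'(x)r_0(\alpha))=-1$ for $x\geq\alpha$ forces $r_0'(x)<0$ on $[\alpha,+\infty)$, so $r_0$ is strictly decreasing there and $r_0(+\infty)<r_0(\alpha)$. The first condition in \eqref{eq:sufficient_initial_conditions} then gives $r_0(\alpha)+r_0(+\infty)>0$, and \eqref{eq:sufficient_initial_condition3_sec0} forces $r_0(+\infty)\geq 0$. Combined with $\beta(t,\alpha)\geq\alpha$ (Proposition~\ref{prop:beta_t_alpha}), monotonicity yields
\begin{equation}
0\leq r_0(+\infty)\leq r_0(\beta(t,\alpha))\leq r_0(\alpha),
\end{equation}
so in particular $r_0(\beta(t,\alpha))+r_0(\alpha)>0$.

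Third, I would read off the signs of the bounds. Since $A(t)\geq 1$, we have $A(t)^{-1}\in(0,1]$ and $1-A(t)^{-1}\in[0,1)$. The expression
\begin{equation}
\etau(t,\alpha)=A(t)^{-1}\bigl(r_0(\beta(t,\alpha))+r_0(\alpha)\bigr)+2r_0(\alpha)\bigl(1-A(t)^{-1}\bigr)
\end{equation}
is then a nonnegative combination of two strictly positive quantities, so $\etau(t,\alpha)>0$; the analogous manipulation shows $\etal(t,\alpha)>0$ because both $r_0(\beta(t,\alpha))+r_0(\alpha)$ and $r_0(\beta(t,\alpha))$ are nonnegative, with the first strictly positive. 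By the first alternative of \eqref{eq:u_1_bar_alpha_bounded}, $\etal\leq\eta(t,x_1(t,\alpha))\leq\etau$, so $\eta(t,x_1(t,\alpha))>0$, matching $\sgn(\eta(0,\alpha))=+1$.

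Finally, the case $r_0(\alpha)<0$ is handled by the same argument with reversed inequalities: one gets $r_0(+\infty)\leq 0$, then $r_0(\alpha)\leq r_0(\beta(t,\alpha))\leq r_0(+\infty)\leq 0$, whence $\etal<0$ and $\etau<0$, and the second alternative of \eqref{eq:u_1_bar_alpha_bounded} yields $\eta(t,x_1(t,\alpha))<0$. The only delicate point is the sign bookkeeping in the convex-like combinations defining $\etal$ and $\etau$; once the hypothesis \eqref{eq:sufficient_initial_condition3_sec0} is used to control the sign of $r_0(+\infty)$, the monotonicity of $r_0$ on $[\alpha,+\infty)$ together with $\beta(t,\alpha)\geq\alpha$ does all the work, and no further dynamical information about $\eta$ is needed beyond the bounds already established in Lemma~\ref{lemma:u_1_bounded}.
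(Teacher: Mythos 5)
Your proof is correct and follows essentially the same route as the paper's: both rest on the two-sided bound from Lemma~\ref{lemma:u_1_bounded} and on showing that $\etal$ and $\etau$ retain the sign of $\eta(0,\alpha)=2r_0(\alpha)$ under \eqref{eq:sufficient_initial_condition3_sec0}. The paper's proof is more terse — it asserts that $\etal$ and $\etau$ are never zero and invokes continuity to keep the initial sign — whereas you compute the sign directly from the monotonicity chain $0\leq r_0(+\infty)\leq r_0(\beta(t,\alpha))\leq r_0(\alpha)$; the two are equivalent and you have simply filled in the bookkeeping the paper leaves implicit.
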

\begin{proof}
The functions $\etal(t,\alpha)$ in \eqref{eq:eta_bar_l_definitions} and $\etau(t,\alpha)$ in \eqref{eq:eta_bar_u_definitions} are both different from zero for every $t\in[0,\tast)$ provided that, in addition to \eqref{eq:sufficient_initial_conditions}, $r_0$ also satisfies \eqref{eq:sufficient_initial_condition3_sec0}. Consequently, they keep the same sign as $\etal(0,\alpha)=\etau(0,\alpha)=2r_0(\alpha)$ for every $t\in[0,\tast)$. 
\end{proof}
\begin{remark}
\label{rmk:eta_l_decreasing}
We note that, under conditions \eqref{eq:sufficient_initial_conditions}, the function $\etal(t,\alpha)$ in \eqref{eq:eta_bar_l_definitions} reproduces for $t>0$ the same behaviour $r_0(x)$ has for $x\geq\alpha$. Indeed, since 
\begin{equation}
\label{eq:eta_l_decreasing}
\partial_t\etal(t,\alpha)=-\frac{\lambda}{2}A(t)^{-1}(r_0(\alpha)-r_0(\beta(t,\alpha)))+r_0'(\beta(t,\alpha))\partial_{t}\beta(t,\alpha)(2-A(t)^{-1}),
\end{equation}
by \eqref{eq:d_t_beta} we see that 
\begin{equation}
\sgn(\partial_t\etal(t,\alpha)r_0'(\alpha))=+1 \quad \text{for every } t\in[0,\tast).
\end{equation}
\end{remark}
In the following lemma, we state a property similar to that found in Remark~\ref{rmk:eta_l_decreasing} for the function  $\etal(t,\alpha)$.
\begin{lemma}
\label{lemma:int_q_sign}
Let the pair $(r,\ell)$ be a solution of class $\mathcal{C}^1$ of the system \eqref{eq:diagonal_system} for $t\in[0,\tast)$ with initial condition $(r_0,\ell_0)$ such that $r_0(x)=-\ell_0(x)$ as in \eqref{eq:IC_riemann}. Assume that $r_0$ is bounded and has a finite limit as $x\to+\infty$, as in \eqref{eq:r_0_infty}.
Assume further  that there exists at least one $\alpha\in\mathbb{R}$ such that \eqref{eq:sufficient_initial_conditions}, and \eqref{eq:sufficient_initial_condition3_sec0} hold. Then, for every $t\in[0,\tast)$ such that
\begin{equation}
\label{integral_q_positive}
-\frac{2\left(A(t)-1\right)}{\sqrt{k(\etal(t,\alpha))}}+\frac{2-A(t)^{-1}}{\sqrt{k(\eta(0,\alpha))}}\geq 0,
\end{equation}
we have that
\begin{equation}
\label{eq:integral_q}
\sgn\left(r_0'(\alpha)\left(\int_0^tq(\tau,x_1(\tau,\alpha))\dd\tau\right)\right)=-1,
\end{equation}
where $q$ is defined as in \eqref{eq:q_def}. 
\end{lemma}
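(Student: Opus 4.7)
The plan is to re-express $q$ along the forward characteristic as a damped exact differential of a primitive of $1/\sqrt{k}$, integrate by parts so that the damping weight $A(\tau)$ pairs with a quantity pointwise controlled by Lemma~\ref{lemma:u_1_bounded}, and then combine the two resulting pieces through carefully chosen mean-value estimates so that the bracket in \eqref{integral_q_positive} emerges. Setting $\widetilde\eta(\tau):=\eta(\tau,x_1(\tau,\alpha))$, the identity $\partial^+\eta=-2k(\eta)\partial_{x}\ell$ from \eqref{eq:partial_plus_minus_u1} combined with \eqref{eq:q_def} gives
$$q(\tau,x_1(\tau,\alpha))=-\frac{A(\tau)}{2\sqrt{k(\widetilde\eta(\tau))}}\widetilde\eta'(\tau),$$
so that, introducing the strictly increasing primitive $K(\eta):=\int_0^\eta ds/\sqrt{k(s)}$, one has $q\,d\tau=-\tfrac12 A(\tau)\,dK(\widetilde\eta(\tau))$. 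An integration by parts with $A'=(\lambda/2)A$ then produces
$$I(t):=\int_0^t q(\tau,x_1(\tau,\alpha))\,d\tau = -\frac{A(t)}{2}\Delta K(t)+\frac{\lambda}{4}\int_0^t A(\tau)\Delta K(\tau)\,d\tau,$$
with $\Delta K(\tau):=K(\widetilde\eta(\tau))-K(\eta(0,\alpha))$.

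I will focus on the representative case $r_0(\alpha)>0$; the alternative case is treated by exchanging the roles of $\etal$ and $\etau$ and reversing every inequality. By \eqref{eq:sufficient_initial_conditions} one has $r_0'(x)<0$ for $x\geq\alpha$, and then \eqref{eq:sufficient_initial_condition3_sec0} together with Lemma~\ref{lemma:sign_eta} keeps $\widetilde\eta(\tau)$ strictly positive while Lemma~\ref{lemma:u_1_bounded} confines it to $[\etal(\tau,\alpha),\etau(\tau,\alpha)]$. A direct computation gives the gaps
$$\eta(0,\alpha)-\etau(\tau,\alpha)=A(\tau)^{-1}\Delta r(\tau,\alpha),\qquad\eta(0,\alpha)-\etal(\tau,\alpha)=(2-A(\tau)^{-1})\Delta r(\tau,\alpha),$$
with $\Delta r(\tau,\alpha):=r_0(\alpha)-r_0(\beta(\tau,\alpha))\geq 0$. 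Since $k$ is increasing on $(0,+\infty)$ and $\etal(\cdot,\alpha)$ decreases in $\tau$ by Remark~\ref{rmk:eta_l_decreasing}, monotonicity of $K$ delivers the two pointwise estimates
$$-\Delta K(t)\geq K(\eta(0,\alpha))-K(\etau(t,\alpha)),\qquad -\Delta K(\tau)\leq K(\eta(0,\alpha))-K(\etal(t,\alpha))\ \text{for every}\ \tau\in[0,t].$$

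Substituting these two estimates into the formula for $I(t)$ and computing $\int_0^t A(\tau)\,d\tau=2(A(t)-1)/\lambda$, a short algebraic rearrangement yields
$$I(t)\geq \frac{1}{2}\Big\{\bigl[K(\eta(0,\alpha))-K(\etal(t,\alpha))\bigr]-A(t)\bigl[K(\etau(t,\alpha))-K(\etal(t,\alpha))\bigr]\Big\}.$$
I then invoke the mean-value theorem sharply: the first $K$-difference is bounded below by $(\eta(0,\alpha)-\etal(t,\alpha))/\sqrt{k(\eta(0,\alpha))}$ (using that the maximum of $k$ on the interval is at $\eta(0,\alpha)$), and the second above by $(\etau(t,\alpha)-\etal(t,\alpha))/\sqrt{k(\etal(t,\alpha))}$ (using the minimum). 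Inserting the explicit gap expressions and using $A(t)(1-A(t)^{-1})=A(t)-1$, the lower bound collapses to
$$I(t)\geq \frac{\Delta r(t,\alpha)}{2}\left[\frac{2-A(t)^{-1}}{\sqrt{k(\eta(0,\alpha))}}-\frac{2(A(t)-1)}{\sqrt{k(\etal(t,\alpha))}}\right],$$
which is nonnegative by hypothesis \eqref{integral_q_positive}. Combined with $r_0'(\alpha)<0$, this gives $\sgn(r_0'(\alpha)I(t))=-1$, as claimed.

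The main obstacle will be preserving the algebraic combination $A(t)[K(\etau)-K(\etal)]$ all the way through the computation: bounding $-\Delta K(t)$ and $-\Delta K(\tau)$ separately to constants and then subtracting produces an extra factor $(A(t)-1)$ in the second term, yielding a strictly weaker sufficient condition than \eqref{integral_q_positive}. Only the combined form, followed by one application each of the upper and lower mean-value bounds on $K$, gives the precise coefficients $2-A(t)^{-1}$ and $2(A(t)-1)$ stated in the lemma.
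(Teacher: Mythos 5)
Your proof is correct and follows essentially the same route as the paper's: your $K$ is exactly the paper's primitive $H$ with $H'(\eta)=1/\sqrt{k(\eta)}$, the integration by parts is the same as \eqref{eq:integral_q_byparts}, and your "upper/lower mean-value estimates" are literally the paper's concavity bounds on $H$ (since $H''=-f/(2k)\leq0$ for $\eta\geq0$ is equivalent to $1/\sqrt{k}$ decreasing), applied with the same two end-points $\etal(t,\alpha)$, $\etau(t,\alpha)$ and $\eta(0,\alpha)$. The intermediate lower bound you write for $I(t)$ coincides term-for-term with \eqref{eq:ineq1}, and the final display reproduces \eqref{eq:ineq3}.
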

\begin{proof}
Here, we focus on the specific case of \eqref{eq:sufficient_initial_conditions} and \eqref{eq:sufficient_initial_condition3_sec0} embodied by 
\begin{equation}\label{eq:sufficient_initial_condition_focus}
r_0(\alpha)>0, \quad r_0(+\infty)\geq0 \quad\text{and}\quad
r_0'(x)<0 \quad \text{for every} \, x\geq\alpha.
\end{equation}
The complementary case arising from assuming that $r_0(\alpha)<0$  is analogous and can be treated similarly.
By \eqref{eq:x_1}, \eqref{eq:partial_plus}, and \eqref{eq:partial_plus_minus_u1}, we can rewrite the definition of $q$ in \eqref{eq:q_def} as
\begin{equation}
\label{eq:q_x1}
q(t,x_1(t,\alpha))=-\frac{1}{2}\frac{A(t)}{\sqrt{k(\eta(t,x_1(t,\alpha)))}}\partial^+\eta=-\frac12A(t)\frac{\dd}{\dd t}H(\eta(t,x_1(t,\alpha))),
\end{equation}
where $H$ is  such that
\begin{equation}
\label{eq:H_map}
H'(\eta)=\frac{1}{\sqrt{k(\eta)}}>0.
\end{equation}
By integrating by parts in \eqref{eq:q_x1}, we obtain that
\begin{equation}
\label{eq:integral_q_byparts}
\int_0^tq(\tau,x_1(\tau,\alpha))\dd\tau=\frac{1}{2}\left[-A(t)H(\eta(t,x_1(t,\alpha)))+H(\eta(0,\alpha))+\frac{\lambda}{2}\int_0^tA(\tau)H(\eta(\tau,x_1(\tau,\alpha)))\dd\tau\right],
\end{equation}
and to prove \eqref{eq:integral_q} it suffices to show that  the right-hand side of \eqref{eq:integral_q_byparts} is positive. Under hypothesis \eqref{eq:sufficient_initial_condition_focus}, by Lemma~\ref{lemma:sign_eta} and Remark~\ref{rmk:eta_l_decreasing}, the function $\eta(t,x_1(t,\alpha))$ is always positive and bounded below by the positive and strictly decreasing function $\etal(t,\alpha)$ in \eqref{eq:eta_bar_l_definitions}. Accordingly, by \eqref{eq:H_map} $H(\etal(t,\alpha))$ is a decreasing function of $t$ for every $t\in[0,\tast)$, and so 
\begin{equation}
\label{eq:H_inside_integral_1}
H(\eta(\tau,x_1(\tau,\alpha)))\geq H(\etal(\tau,\alpha))\geq H(\etal(t,\alpha)).
\end{equation}
Moreover, since 
\begin{equation}
	\label{eq:H_double_prime}
H''(\eta)=-\frac{f(\eta)}{2k(\eta)}\quad\text{and}\quad f(\eta)\eta\geq0,
\end{equation}
$H$ is  concave for every $\eta\geq0$. Thus, the right-hand side of \eqref{eq:integral_q_byparts} can  be estimated as
\begin{subequations}
\label{integral_q_byparts_estimated0}
\begin{align}
\int_0^tq(\tau,x_1(\tau,\alpha))&\geq\frac12\left[ A(t)\left(H(\etal(t,\alpha))-H(\etau(t,\alpha))\right)+H(\eta(0,\alpha))-H(\etal(t,\alpha))\right]\label{eq:ineq1}\\
&\geq\frac12 \left[A(t)H'(\etal(t,\alpha))\left(\etal(t,\alpha)-\etau(t,\alpha)\right)+H'(\eta(0,\alpha))\left(\eta(0,\alpha)-\etal(t,\alpha)\right)\right]\label{eq:ineq2}\\
&=\frac12(r_0(\alpha)-r_0(\beta(t,\alpha)))\left[-\frac{2\left(A(t)-1\right)}{\sqrt{k(\etal(t,\alpha))}}+\frac{2-A(t)^{-1}}{\sqrt{k(\eta(0,\alpha))}}\right]\label{eq:ineq3},
\end{align}
\end{subequations}
where inequality \eqref{eq:ineq2} follows from  the concavity of  $H$ and use is made in \eqref{eq:ineq3} of \eqref{eq:eta_bar_l_definitions}, \eqref{eq:eta_bar_u_definitions}, and \eqref{eq:H_map}. By \eqref{eq:sufficient_initial_condition_focus}, $r_0(\alpha)-r_0(\beta(t,\alpha))>0$ for $t>0$, and so \eqref{eq:ineq3}implies \eqref{eq:integral_q}.
\end{proof}
\begin{remark}
\label{rmk:estimate_approx_delta}
The function $\beta(t,\alpha)$ is hard to find explicitly, but its bounds  in \eqref{eq:beta_confined} provide explicit estimates for $\etal$:
\begin{equation}
\label{eq:lower_bound_delta}
\begin{cases}
\etal(t,\alpha)\geq\etaLp(t,\alpha)\geq0&\textrm{if}\ r_0(\alpha)>0,\\
\etal(t,\alpha)\leq\etaLp(t,\alpha)\leq0&\textrm{if}\ r_0(\alpha)<0,
\end{cases}
\end{equation}
where 
\begin{equation}
\label{eq:lower_bound_eta_delta}
\etaLp(t,\alpha):=A(t)^{-1}\left(r_0(\alpha+2\delta t)+r_0(\alpha)\right)+2 r_0(\alpha+2\delta t)\left(1-A(t)^{-1}\right),
\end{equation}
with $\delta$ defined in \eqref{eq:estimates}. 
Since the function $\alpha+2\delta t$ is monotonically increasing with time, the function $\etaLp(t,\alpha)$ retains the same properties as $\etal(t,\alpha)$ outlined in Lemma~\ref{lemma:sign_eta} and Remark~\ref{rmk:eta_l_decreasing}: thus,   under conditions \eqref{eq:sufficient_initial_conditions} and \eqref{eq:sufficient_initial_condition3_sec0}, 
\begin{equation}
\label{eq:properties_eta_l_delta}
\sgn(\partial_{t}\etaLp(t,\alpha)r_0'(\alpha))=+1 \quad\text{and}\quad \sgn(\partial_{t}\etaLp(t,\alpha)\eta(0,\alpha))=+1 \quad \hbox{ for every } t\in[0,\tast).
\end{equation}
Moreover, from \eqref{eq:lower_bound_delta} we obtain that
\begin{equation}
\label{integral_q_positive_approx0}
-\frac{2\left(A(t)-1\right)}{\sqrt{k(\etal(t,\alpha))}}+\frac{2-A(t)^{-1}}{\sqrt{k(\eta(0,\alpha))}}\geq -\frac{2\left(A(t)-1\right)}{\sqrt{k(\etaLp(t,\alpha))}}+\frac{2-A(t)^{-1}}{\sqrt{k(\eta(0,\alpha))}},
\end{equation}
and so the conclusion of Lemma~\ref{lemma:int_q_sign} also applies under the following condition
\begin{equation}
\label{integral_q_positive_approx}
-\frac{2\left(A(t)-1\right)}{\sqrt{k(\etaLp(t,\alpha))}}+\frac{2-A(t)^{-1}}{\sqrt{k(\eta(0,\alpha))}}\geq0,
\end{equation}
which strengthens \eqref{integral_q_positive} and will replace it in our development below.
\end{remark}

\subsection{Main Result}
We now provide an estimate for the critical time $t^\ast(\lambda,\alpha)$  at which a singularity occurs in a forward characteristics under the assumptions  \eqref{eq:sufficient_initial_conditions} and \eqref{eq:sufficient_initial_condition3_sec0}. This estimate will be acceptable only  if it satisfies \eqref{integral_q_positive_approx}.
\begin{theorem}
\label{th:suff_cond}
Consider the global Cauchy problem \eqref{eq:diagonal_system} with initial condition $(r_0,\ell_0)$ such that $r_0(x)=-\ell_0(x)\in \mathcal{C}^1$ as in \eqref{eq:IC_riemann}. 
Suppose that for a given $\lambda>0$ there exists at least one $\alpha\in\mathbb{R}$ such that $r_0$ satisfies conditions \eqref{eq:sufficient_initial_conditions} and \eqref{eq:sufficient_initial_condition3_sec0}. Then the solution $r(t,x)$ to \eqref{eq:diagonal_system} is predicted to develop a singularity along the characteristic curve $x=x_1(t,\alpha)$ in a finite time $t^*(\lambda,\alpha)\leq\tc(\lambda,\alpha)$, provided that there exists a solution to the following equation for  $\tc$,
\begin{equation}
\label{eq:critical_time}
1-\frac{2}{\lambda}\left(1-A(\tc)^{-1}\right)\gamma^+(\tc;\lambda,\alpha)|r'_0(\alpha)|\sqrt{k(2r_0(\alpha))}=0,
\end{equation}
where $\gamma^+$ is defined as
\begin{equation}
\label{eq:gamma_alpha}
\gamma^+(t;\lambda,\alpha):=\min\left\{|f(\etaLp(t,\alpha))|, |f(2r_0(\alpha))|\right\},
\end{equation}
with $\etaLp$ as in \eqref{eq:lower_bound_eta_delta}, and provided that at $t=\tc(\lambda,\alpha)$  inequality \eqref{integral_q_positive_approx} is satisfied.
\end{theorem}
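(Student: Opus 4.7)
The plan is to convert equation \eqref{eq:partial_plus_y} into a Riccati-type integral inequality along the forward characteristic $x=x_1(t,\alpha)$ and to extract the blow-up time by ODE comparison. Writing $P(t):=p(t,x_1(t,\alpha))$, $Q(t):=q(t,x_1(t,\alpha))$, and $\widetilde{f}(t):=f(\eta(t,x_1(t,\alpha)))$, \eqref{eq:partial_plus_y} reduces to
\begin{equation*}
P'(t)=-\frac{\lambda}{2}Q(t)-A(t)^{-1}\widetilde{f}(t)P(t)^2,
\end{equation*}
with $P(0)=\sqrt{k(2r_0(\alpha))}\,r_0'(\alpha)$ because $\eta(0,\alpha)=2r_0(\alpha)$ by \eqref{eq:IC_riemann}. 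Integrating on $[0,t]$ yields
\begin{equation*}
P(t)=P(0)-\frac{\lambda}{2}\int_0^tQ(\tau)\dd\tau-\int_0^tA(\tau)^{-1}\widetilde{f}(\tau)P(\tau)^2\dd\tau.
\end{equation*}

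Next I would pin down signs and lower bounds. Focusing without loss of generality on the case $r_0(\alpha)>0$ (so $r_0'(\alpha)<0$ and $P(0)<0$; the complementary case follows by sign reversal), Lemma~\ref{lemma:sign_eta} gives $\eta(t,x_1(t,\alpha))>0$ on $[0,\tast)$, and the parametric representation in Remark~\ref{rmk:f_parametric} shows that $f$ and $\eta$ share the same sign, so $\widetilde{f}(t)>0$. Lemma~\ref{lemma:u_1_bounded}, combined with the explicit lower bound $\etaLp$ of Remark~\ref{rmk:estimate_approx_delta}, confines $\eta(t,x_1(t,\alpha))$ to the interval $[\etaLp(t,\alpha),2r_0(\alpha)]$; since $|f|$ on $[0,+\infty)$ is continuous with a unique interior maximum $f_0$ at $\eta_0$, its infimum over this interval is attained at one of the two endpoints and equals $\gamma^+(t;\lambda,\alpha)$, whence $\widetilde{f}(t)\geq\gamma^+(t;\lambda,\alpha)$. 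A short case check based on the monotone evolution of $\etaLp$ (Remark~\ref{rmk:estimate_approx_delta}) shows that $t\mapsto\gamma^+(t;\lambda,\alpha)$ is non-increasing on $[0,\tast)$. Finally, Lemma~\ref{lemma:int_q_sign}, applied through the strengthened admissibility condition \eqref{integral_q_positive_approx}, yields $\int_0^tQ(\tau)\dd\tau\geq 0$, so the $Q$-contribution to $P(t)$ is non-positive.

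Setting $V(t):=-P(t)$, so that $V(0)=|r_0'(\alpha)|\sqrt{k(2r_0(\alpha))}>0$, the integrated equation gives
\begin{equation*}
V(t)\geq V(0)+\int_0^tA(\tau)^{-1}\widetilde{f}(\tau)V(\tau)^2\dd\tau.
\end{equation*}
Denote the right-hand side by $W(t)$: then $V\geq W$, $W(0)=V(0)$, and $W'(t)=A(t)^{-1}\widetilde{f}(t)V(t)^2\geq A(t)^{-1}\widetilde{f}(t)W(t)^2$. Dividing by $W^2$, integrating on $[0,t]$, and using $\widetilde{f}(\tau)\geq\gamma^+(\tau;\lambda,\alpha)\geq\gamma^+(t;\lambda,\alpha)$ for $\tau\in[0,t]$, I obtain
\begin{equation*}
\frac{1}{W(t)}\leq\frac{1}{V(0)}-\gamma^+(t;\lambda,\alpha)\int_0^tA(\tau)^{-1}\dd\tau=\frac{1}{V(0)}-\frac{2}{\lambda}\bigl(1-A(t)^{-1}\bigr)\gamma^+(t;\lambda,\alpha).
\end{equation*}
The right-hand side vanishes precisely when $t=\tc(\lambda,\alpha)$ satisfies \eqref{eq:critical_time}; at that instant $W$, and hence $V=|P|$, must have already diverged. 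By Definition~\ref{def:degeneracy} this divergence is a shock along $x_1(\cdot,\alpha)$, so $t^\ast(\lambda,\alpha)\leq\tc(\lambda,\alpha)$.

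The central obstacle is the simultaneous handling of the three terms in the Riccati equation: the sign of the cumulative damping $\int_0^tQ\dd\tau$, the pointwise lower bound on $|\widetilde{f}|$ along the characteristic, and the monotonicity of $\gamma^+$ needed to pull it outside the time integral. The first is exactly the content of Lemma~\ref{lemma:int_q_sign}, which is why the strengthened inequality \eqref{integral_q_positive_approx} enters as a standing hypothesis at $t=\tc(\lambda,\alpha)$; the second and third rest on the structured but non-monotone shape of $f$ (odd, with a unique extremum $\mp f_0$ at $\pm\eta_0$) together with the monotone evolution of $\etaLp$, and require separating the subcases $2|r_0(\alpha)|\gtrless\eta_0$. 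The complementary case $r_0(\alpha)<0$ is treated by reversing signs throughout, yielding the same equation \eqref{eq:critical_time}.
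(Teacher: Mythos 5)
Your proof is correct and follows essentially the same route as the paper's: both start from the integrated Riccati identity for $p$ along the forward characteristic, use Lemma~\ref{lemma:int_q_sign} (via the strengthened admissibility condition~\eqref{integral_q_positive_approx}) to discard the $q$-integral, bound $f(\eta(\tau,x_1(\tau,\alpha)))$ from below by $\gamma^+(t;\lambda,\alpha)$ using the interval $[\etaLp(t,\alpha),2r_0(\alpha)]$ for $\eta$, and conclude blow-up by comparison with an autonomous Riccati inequality. The only cosmetic difference is that you derive the differential inequality for $1/W$ directly (writing $W(t):=V(0)+\int_0^tA^{-1}\widetilde f\,V^2\dd\tau$ and dividing $W'\geq A^{-1}\widetilde f W^2$ by $W^2$), whereas the paper introduces the auxiliary comparison solution $\Phi$ satisfying $\dot\Phi+A^{-1}f\,\Phi^2=0$, solves it explicitly, and invokes the comparison theorem stated in the Appendix; the two manipulations are equivalent, and your version is marginally more self-contained since it sidesteps the explicit appeal to the Teschl-style comparison theorem. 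All sign checks ($r_0(\alpha)>0$, $\widetilde f>0$, $P(0)<0$, nesting $\eta(\tau,x_1)\in[\etaLp(t,\alpha),2r_0(\alpha)]$) are accounted for and match the paper's.
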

\begin{proof}
As above, for the proof of this theorem, we focus again  on the specific case where conditions \eqref{eq:sufficient_initial_conditions} and \eqref{eq:sufficient_initial_condition3_sec0} are specialized as in \eqref{eq:sufficient_initial_condition_focus}. 
We first consider the ordinary differential equation in \eqref{eq:partial_plus_y}. By integrating in time, $p$ can be written along the forward characteristic $x_1(t,\alpha)$ as
\begin{equation}
\label{eq:y_along_characteristic}
p(t,x_1(t,\alpha))=p(0,\alpha)-\frac{\lambda}{2}\int_0^tq(\tau,x_1(\tau,\alpha))\dd\tau-\int_0^tA(\tau)^{-1}f(\eta(\tau,x_1(\tau,\alpha)))p(\tau,x_1(\tau,\alpha))^2\dd\tau.
\end{equation}
Here, by \eqref{eq:y_def},
\begin{equation}\label{eq:p_initial}
	p(0,\alpha)=\sqrt{k(2r_0(\alpha))}r_0'(\alpha)<0.
\end{equation}
By Lemma~\ref{lemma:int_q_sign}, for every $t$ such that \eqref{integral_q_positive_approx} holds, 
\begin{equation}\label{eq:positive_integral}
\int_0^tq(\tau,x_1(\tau,\alpha))\dd\tau>0,
\end{equation}
and so 
\begin{equation}
\label{eq:y_along_characteristic_ineq}
p(t,x_1(t,\alpha))\leq p(0,\alpha)-\int_0^tA(\tau)^{-1}f(\eta(\tau,x_1(\tau,\alpha)))p(\tau,x_1(\tau,\alpha))^2\dd\tau.
\end{equation}
Moreover, according to Lemma~\ref{lemma:sign_eta}, under conditions \eqref{eq:sufficient_initial_condition_focus} $\eta(t,x_1(t,\alpha))$ is  positive for every $t\in[0,t^*(\lambda,\alpha))$. Since $\sgn(f(\eta)\eta)=+1$ for all $\eta\neq0$,  $f(\eta(\tau,x_1(\tau,\alpha)))$ in \eqref{eq:y_along_characteristic_ineq} is also positive for every $\tau\in[0,t]$, and so
\begin{equation}
\label{eq:Psi_def}
p(t,x_1(t,\alpha))\leq\Psi(t):=p(0,\alpha)-\int_0^tA(\tau)^{-1}f(\eta(\tau,x_1(\tau,\alpha)))p(\tau,x_1(\tau,\alpha))^2\dd\tau<0,
\end{equation}
from which we arrive at
\begin{equation}
\label{eq:dot_Psi}
\dot\Psi(t)=-A(t)^{-1}f(\eta(t,x_1(t,\alpha)))p(t,x_1(t,\alpha))^2\leq-A(t)^{-1}f(\eta(t,x_1(t,\alpha)))\Psi(t)^2.
\end{equation}
Let now $\Phi(t)$ satisfy
\begin{equation}
\label{eq:zeta_ode}
\dot{\Phi}(t)+A(t)^{-1}f(\eta(t,x_1(t,\alpha)))\Phi(t)^2=0, \quad \Phi(0)=p(0,\alpha)<0,
\end{equation}
so that, by \eqref{eq:dot_Psi},
\begin{equation}
\label{eq:comparison_v_zeta}
\dot{\Psi}(t)+A(t)^{-1}f(\eta(t,x_1(t,\alpha)))\Psi(t)^2\leq0=\dot{\Phi}(t)+A(t)^{-1}f(\eta(t,x_1(t,\alpha)))\Phi(t)^2.
\end{equation}
By applying to \eqref{eq:comparison_v_zeta} the comparison theorem stated in Appendix~\ref{sec:aux_res}, by \eqref{eq:Psi_def}, we find that 
\begin{equation}
\label{eq:v_leq_zeta}
p(t,x_1(t,\alpha))\leq\Psi(t)\leq\Phi(t), \quad \text{for every } t\in[0,t^*(\lambda,\alpha)),
\end{equation}
where $\Phi(t)$ is the solution of \eqref{eq:zeta_ode},
\begin{equation}
\label{eq:solution_zeta}
\Phi(t)=
-|p(0,\alpha)|\left(1-|p(0,\alpha)|\int_0^tA(\tau)^{-1}f(\eta(\tau,x_1(\tau,\alpha)))\dd\tau\right)^{-1}<0.
\end{equation}
By \eqref{eq:p_initial}, $\Phi$ diverges to $-\infty$ in a finite time $t^\ast_\mathrm{c}(\lambda,\alpha)$ if
\begin{equation}
\label{eq:denominator_critical}
g(t^\ast_\mathrm{c};\lambda,\alpha):=1-\sqrt{k(2r_0(\alpha))}|r'_0(\alpha)|\int_0^{t^\ast_\mathrm{c}}A(\tau)^{-1}f(\eta(\tau,x_1(\tau,\alpha)))\dd\tau=0
\end{equation}
and, by \eqref{eq:v_leq_zeta} and \eqref{eq:y_def}, so also does  $\partial_{x}r(t,x)$ along the characteristic curve $x=x_1(t,\alpha)$.

To arrive at \eqref{eq:critical_time} from \eqref{eq:denominator_critical}, we observe that,  by Lemma~\ref{lemma:sign_eta}, Remark~\ref{rmk:estimate_approx_delta}, and \eqref{eq:properties_eta_l_delta}, 
\begin{equation}
\label{eq:H_inside_integral_2}
\etaLp(t,\alpha)\leq\etaLp(\tau,\alpha))\leq \eta(\tau,x_1(\tau,\alpha))\leq 2r_0(\alpha)\quad\text{for every }\tau\in[0,t],
\end{equation}
and so
\begin{align}
\label{eq:denominator_critical_plus}
g(t;\lambda,\alpha)&\leq1-\sqrt{k(2r_0(\alpha))}|r'_0(\alpha)|\gamma^+(t;\lambda,\alpha)\int_0^{t}A(\tau)^{-1}\dd\tau\nonumber\\
&=1-\sqrt{k(2r_0(\alpha))}|r'_0(\alpha)|\gamma^+(t;\lambda,\alpha)\frac{2}{\lambda}\left(1-A(t)^{-1}\right) \quad\text{for every }t>0,
\end{align}
where $\gamma^+(t;\lambda,\alpha)$ is defined by \eqref{eq:gamma_alpha}. Thus, letting $t_\mathrm{c}(\lambda,\alpha)$ be a root of \eqref{eq:critical_time}, provided that it exists and satisfies \eqref{eq:y_along_characteristic_ineq}, \eqref{eq:denominator_critical_plus} implies that
\begin{equation}
	\label{eq:time:hierarchy}
	t^\ast(\lambda,\alpha)\leq t^\ast_\mathrm{c}(\lambda,\alpha)\leq t_\mathrm{c}(\lambda,\alpha).
\end{equation}
\end{proof}
\begin{remark}\label{rmk:ODE}
The proof of Theorem~\ref{th:suff_cond} essentially relies  on the existence for all times of solutions for the following Riccati equation,
\begin{equation}
\label{eq:riccati_equation}
\dot v(t)+G(t)v(t)^2=b(t),
\end{equation}
which reduces to \eqref{eq:partial_plus_y} once we set
\begin{subequations}
\label{eq:ingredients_riccati}
\begin{align}
v(t)&:=p(t,x_1(t,\alpha)),\\
G(t)&:=A(t)^{-1}f(\eta(t,x_1(t,\alpha))),\\
b(t)&:=-\frac{\lambda}{2}A(t)\sqrt{k(\eta(t,x_1(t,\alpha)))}\partial_{x}\ell(t,x_1(t,\alpha)).
\end{align}
\end{subequations}
Were we able to determine the sign of $b(t)$ or verify whether $b(t)\leq s(t)$, for some differentiable and non-decreasing function $s(t)$, we could either guarantee   solvability of \eqref{eq:riccati_equation} for all times \cite{grigoryan:global}, or  blow-up in a finite time \cite{wei:new,baris:blowup,dou:analytical}. However, by \eqref{eq:partial_plus_minus_u1}, controlling the sign of $b$ amounts to control the sign of the derivative $\partial^+\eta$ along forward characteristics, which is a far more demanding task than the bounds delivered by \eqref{eq:u_1_bar_alpha_bounded} can accomplish. In our study, the control of the sign of $b$ is replaced by the estimate in \eqref{eq:y_along_characteristic_ineq}.
\end{remark}
\begin{remark}
A parallel argument applied to backward characteristics proves that if $r_0$ has a finite limit as $x\to-\infty$,
\begin{equation}
\label{eq:r_0_-infty}
\lim_{x\to-\infty}r_0(x)=:r_0(-\infty)\in\mathbb{R},
\end{equation}
and if there exists at least one $\beta\in\mathbb{R}$ such that $r_0(x)$ satisfies the conditions
\begin{subequations}
\begin{gather}
\label{eq:sufficient_condition_minus_infinity}
\sgn\left(r_0'(\beta)\left(r_0(\beta)+r_0(-\infty)\right)\right)=+1\quad\text{and}\quad
\sgn\left(r_0'(x)r_0(\beta)\right)=+1, \quad \text{for every } x\leq\beta,\\
\sgn\left(r_0(\beta)r_0(-\infty)\right)=+1,
\end{gather}
\end{subequations}
then the solution $\ell(t,x)$ to \eqref{eq:diagonal_system} will develop a singularity along the characteristic curve $x=x_2(t,\beta)$ in a finite time $t^\ast(\lambda,\beta)\leq\tc(\lambda,\beta)$, provided that 
\begin{enumerate}
	\item there exists a solution to the following equation for  $\tc$,
\begin{equation}
\label{eq:critical_time_beta}
1-\frac{2}{\lambda}\left(1-A(\tc)^{-1}\right)\gamma^-(\tc;\lambda,\beta)|r'_0(\beta)|\sqrt{k(2r_0(\beta))}=0,
\end{equation}
where $\gamma^-$ is defined as
\begin{equation}
\label{eq:gamma_beta}
\gamma^-(t;\lambda,\beta):=\min\left\{|f(\etaLm(t,\beta))|, |f(2r_0(\beta))|\right\}
\end{equation}
and
\begin{equation}
\label{eq:lower_bound_delta_beta}
\etaLm(t,\beta):=A(t)^{-1}\left(r_0(\beta-2\delta t)+r_0(\beta)\right)+2 r_0(\beta-2\delta t)\left(1-A(t)^{-1}\right),
\end{equation}
\item 
at $t=\tc(\lambda,\beta)$ the following inequality is satisfied,
\begin{equation}
\label{integral_q_positive_approx_beta}
-\frac{2\left(A(t)-1\right)}{\sqrt{k(\etaLm(t,\beta))}}+\frac{2-A(t)^{-1}}{\sqrt{k(\eta(0,\beta))}}\geq0.
\end{equation}
\end{enumerate}
\end{remark}
\begin{remark}
We now consider the inviscid limit of Theorem~\ref{th:suff_cond}. For $\lambda=0$,  \eqref{eq:eta_l_u_lambda_0} applies and since, by \eqref{eq:beta_confined}, $\beta(t,\alpha)\geq\alpha$ for every $t\geq0$, \eqref{eq:d_t_beta} implies that under conditions \eqref{eq:sufficient_initial_conditions} $r_0(\alpha)+r_0(\beta(t,\alpha))$ is monotonic and does not change sign for all $t\geq0$. Thus, the asymptotic limit $r_0(\alpha)+r_0(+\infty)$ approached as $t\to\infty$ has the same sign as $r_0(\alpha)+r_0(\beta(0,\alpha))$. This implies that
\begin{equation}
|\etaLp(t,\alpha)|=|r_0(\alpha)+r_0(\alpha+2\delta t)|\geq|r_0(\alpha)+r_0(+\infty)|\quad\text{for every }t\geq0,
\end{equation}
so that 
\begin{equation}
\label{eq:lambda_to_zero_critical_time}
\gamma^+(t;0,\alpha):=	\lim_{\lambda\to0}\gamma^+(t;\lambda,\alpha)=
\min\left\{\left|f(2r_0(\alpha))\right|,\left|f(r_0(\alpha)+r_0(+\infty))\right|\right\}\quad\text{for every }t\geq0
\end{equation}
and \eqref{eq:critical_time} becomes
\begin{equation}
	\label{eq:critical_time_inviscid}
	\tc(\alpha,0)=\frac{1}{\sqrt{k(2r_0(\alpha))}|r_0'(\alpha)|\gamma^+(t;0,\alpha)},
\end{equation}
which is precisely the estimate achieved in Corollary~1 of \cite{paparini:singular}.
 We finally also note that in the inviscid case,  inequality \eqref{integral_q_positive_approx} need not be satisfied.
\end{remark}
Theorem~\ref{th:suff_cond} estimates $t^\ast(\lambda,\alpha)$, for given $\lambda$ and $\alpha$. An estimate for the critical time $\tast$ follows from minimizing $\tc(\lambda,\alpha)$ over all $\alpha$'s for which a singularity occurs, as summarized in the following proposition. \begin{proposition}
Let the hypotheses of Theorem~\ref{th:suff_cond} be satisfied and let 
\begin{equation}
	\label{eq:admissible_set_definition}
	\adm:=\{\alpha\in\mathbb{R}:\exists\ \tc\ \text{that solves \eqref{eq:critical_time} subject to \eqref{integral_q_positive_approx}}\}.
\end{equation}
Then, for given $\lambda>0$, the critical time $\tast$ for the existence of a solution of class $\mathcal{C}^1$ to system \eqref{eq:diagonal_system}  can be estimated as 
\begin{equation}
\label{eq:t_ast_definition}
\tast\leq \tc(\lambda):=\begin{cases}
	\inf_{\alpha\in\adm}\tc(\lambda,\alpha)&\emph{if}\ \adm\neq\emptyset,\\
	 +\infty &\emph{if}\ \adm=\emptyset.
\end{cases}
\end{equation}
\end{proposition}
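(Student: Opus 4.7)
The plan is to obtain this proposition as a direct corollary of Theorem~\ref{th:suff_cond} combined with the dichotomy recorded in Definition~\ref{rmk:t_critical}. The whole argument amounts to a bookkeeping step: every admissible $\alpha$ yields a singularity along a forward characteristic at a time no later than $\tc(\lambda,\alpha)$, so the earliest of these singularities furnishes an upper bound on the breakdown time $\tast$.

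First, I would dispose of the trivial case $\adm=\emptyset$. By convention in Definition~\ref{rmk:t_critical}, $\tast$ takes values in $(0,+\infty]$, hence the inequality $\tast\leq+\infty$ holds tautologically, consistent with the second branch of the definition of $\tc(\lambda)$.

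For $\adm\neq\emptyset$, I would fix an arbitrary $\alpha\in\adm$ and invoke Theorem~\ref{th:suff_cond} to produce a finite time $t^\ast(\lambda,\alpha)\leq\tc(\lambda,\alpha)$ at which $p(t,x_1(t,\alpha))$ diverges. Through the definition of $p$ in \eqref{eq:y_def} together with the positive lower bound on $k$ ensured by \eqref{eq:estimates} and the boundedness of $A(t)$ on any compact time interval, this forces $\partial_{x}r(t,x_1(t,\alpha))$ to diverge as $t\to t^\ast(\lambda,\alpha)$, and in particular $||\partial_{x}r||_\infty+||\partial_{x}\ell||_\infty\to+\infty$ as $t\to t^\ast(\lambda,\alpha)$. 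The dichotomy in Definition~\ref{rmk:t_critical} then yields $\tast\leq t^\ast(\lambda,\alpha)\leq\tc(\lambda,\alpha)$. Since $\alpha\in\adm$ was arbitrary, taking the infimum on the right-hand side delivers the claimed bound $\tast\leq\tc(\lambda)$.

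No serious technical obstacle is anticipated: the infimum need not be attained, and a single scalar that is bounded above by every member of a family is automatically bounded above by that family's infimum. The only conceptual caveat, already flagged in Remark~\ref{rmk:economy}, is that the statement exploits only the forward family of characteristics; a sharper bound could be obtained by also minimizing the analogous quantity $\tc(\lambda,\beta)$ arising from backward characteristics through \eqref{eq:critical_time_beta} subject to \eqref{integral_q_positive_approx_beta}, but the present proposition deliberately confines itself to the forward estimate.
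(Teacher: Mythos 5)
Your proof is correct and follows essentially the same route the paper intends: the paper states the proposition without an explicit proof, presenting it as the direct consequence of minimizing $\tc(\lambda,\alpha)$ over all admissible $\alpha$, which is precisely the bookkeeping you carry out, including the correct handling of the trivial case $\adm=\emptyset$ and the use of Definition~\ref{rmk:t_critical} to convert the divergence of $p$ (hence of $\partial_x r$, by \eqref{eq:y_def} and \eqref{eq:estimates}) into a bound on $\tast$. Your closing remark about the deliberate restriction to forward characteristics accurately reflects Remark~\ref{rmk:economy}.
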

\begin{remark}
	It should be noted that whenever $\adm=\emptyset$, \eqref{eq:t_ast_definition} does \emph{not} imply that the solution to system \eqref{eq:diagonal_system} is smooth for all times: more simply, our sufficient criterion for criticality does not apply.
\end{remark}

\section{Application}\label{sec:applications}
As an illustrative example, we consider an initial profile $w_0$ for the twist angle that exhibits a strong concentration of distortion around $x=0$, which fades away at infinity without ever vanishing. Thus, by \eqref{eq:kernel}, the more distorted core is expected to propagate faster than the distant tails, possibly overtaking them: intuitively, this should prompt the creation of a singularity in a finite time. We shall see here how such an intuitive prediction is indeed confirmed by the estimate \eqref{eq:t_ast_definition}.

We consider the following  initial twist  profile
\begin{equation}
\label{eq:w_0_arctan}
\twist_0(\kappa,\zeta;x):=-\frac{2\kappa}{\pi}\arctan\frac{x}{\zeta},
\end{equation}
where $\kappa$ and $\zeta$ are positive parameters. This is a \emph{kink} representing a smooth transition of the twist angle between two asymptotic values depending on $\kappa$ over an effective  width depending on $\zeta$.
Fig.~\ref{fig:w_0arctan} illustrates the graphs of the initial profile $\twist_0$ in \eqref{eq:w_0_arctan} for different values of $\zeta$ and $\kappa$: either increasing  $\zeta$ or decreasing  $\kappa$ makes the initial profile \emph{less} distorted (whereas either decreasing $\zeta$ or increasing $\kappa$ makes the initial profile \emph{more} distorted).
\begin{figure}[]
	\centering
	\begin{subfigure}[c]{0.35\linewidth}
		\centering
		\includegraphics[width=0.9\linewidth]{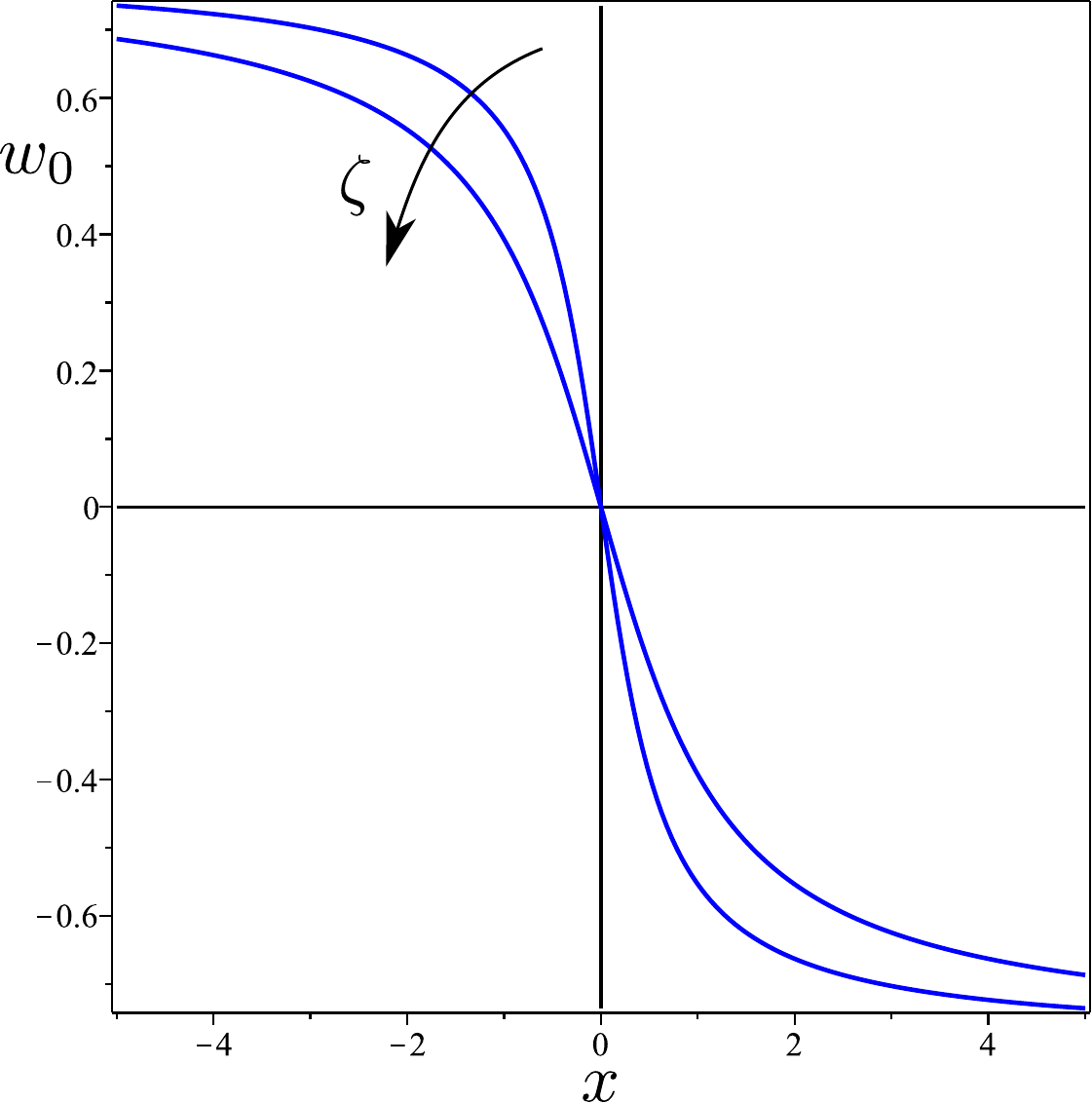}
		\caption{$\kappa=\pi/4$, $\zeta=1/2, \, 1$.} 
		\label{fig:w_0arctanzeta}
	\end{subfigure}
	\begin{subfigure}[c]{0.35\linewidth}
		\centering
		\includegraphics[width=0.9\linewidth]{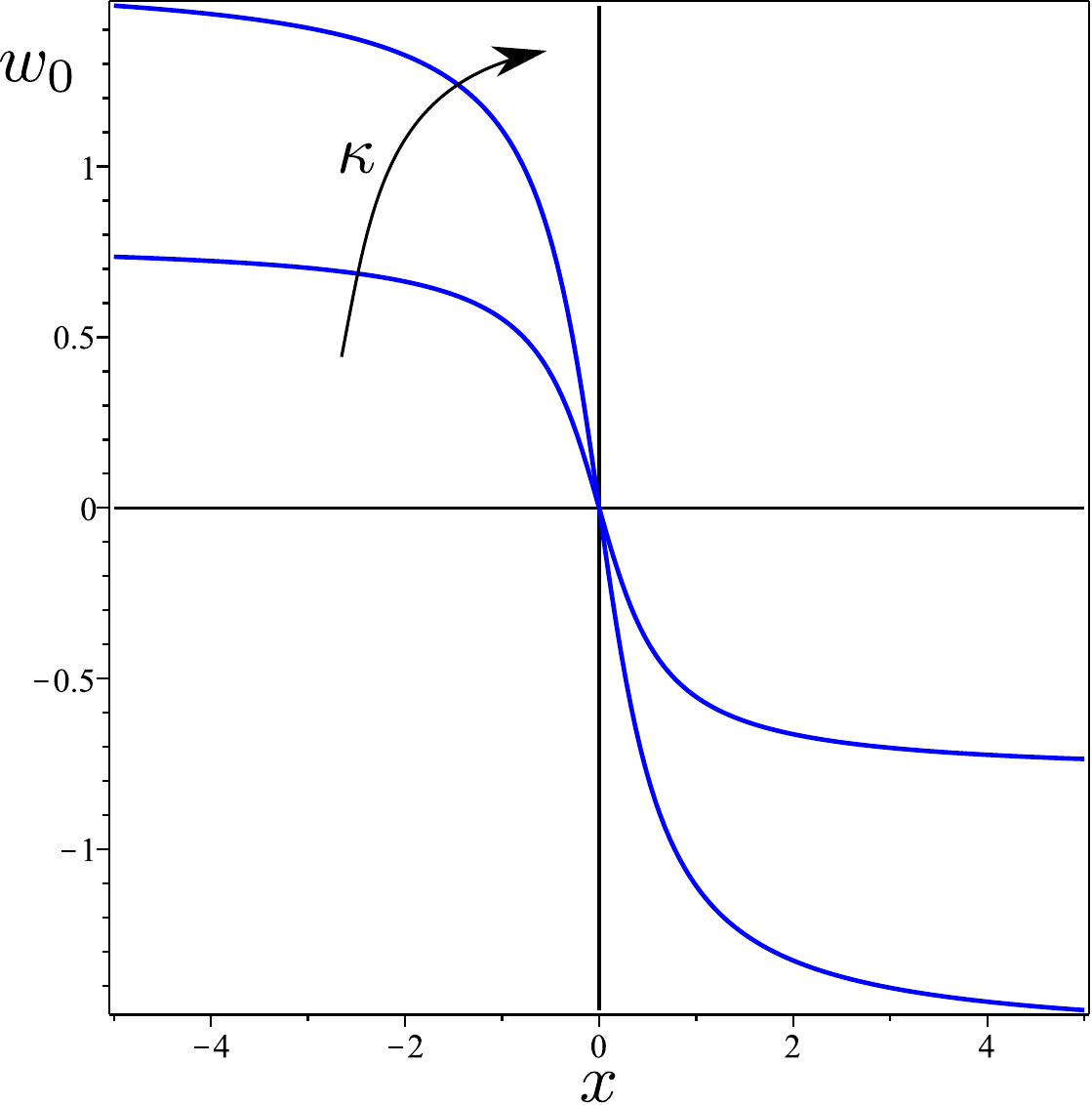}
		\caption{$\zeta=1/2$, $\kappa=\pi/4, \, \pi/2$.} 
		\label{fig:w_0arctankappa}
	\end{subfigure}
\caption{Initial profile of the twist angle $w_0$ represented by  \eqref{eq:w_0_arctan} for several values of the parameters $\kappa$ and $\zeta$, which describe the amplitude and width of the kink, respectively.}
	\label{fig:w_0arctan}
\end{figure}

By \eqref{eq:IC_riemann} and \eqref{eq:L_u_1}, $r_0$ is given by
\begin{equation}
\label{eq:r_0arctan}
r_0(\kappa,\zeta;x)=\frac{1}{2}\left(\xi\sqrt{1+\xi^2}+\hbox{arcsinh}\,\xi\right), \quad \, \hbox{where} \quad \xi:=\frac{2\kappa\zeta}{\pi(x^2+\zeta^2)}.
\end{equation}
The  graph of $r_0$ is illustrated in Fig.~\ref{fig:r_0arctan} for different values of $\kappa$ and $\zeta$. Since $r_0$ is an even function,  by Theorem~\ref{th:suff_cond},  we  need only  consider forward characteristics: if a singularity arises along a forward characteristic originating at $\alpha$, a singularity will also occur at the same critical time along the (symmetric) backward characteristic originating at $\beta=-\alpha$.
\begin{figure}[]
	\centering
	\begin{subfigure}[c]{0.35\linewidth}
		\centering
		\includegraphics[width=0.9\linewidth]{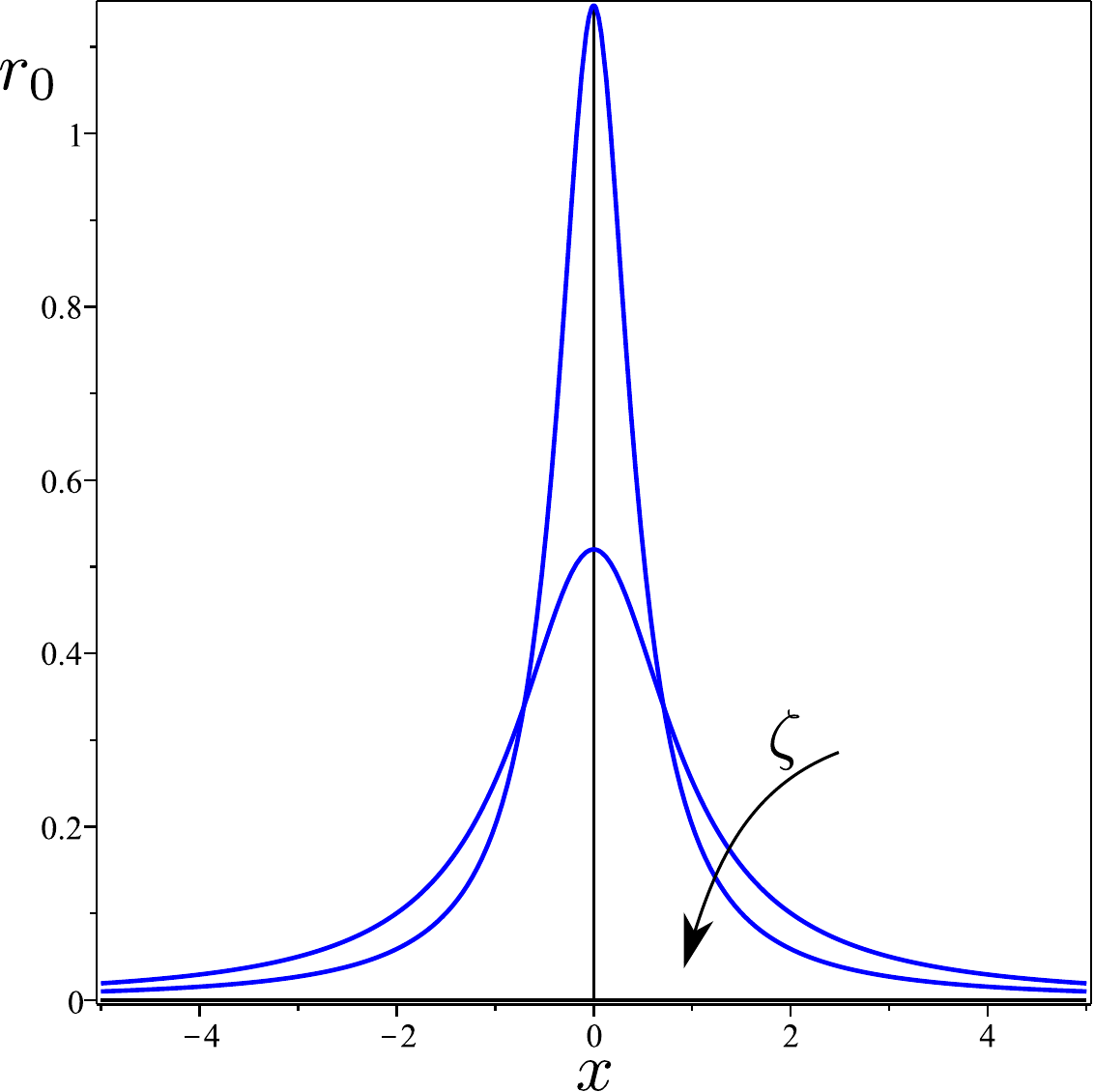}
		\caption{$\kappa=\pi/4$, $\zeta=1/2, \, 1$.} 
		\label{fig:w_0arctanzeta}
	\end{subfigure}
	\begin{subfigure}[c]{0.35\linewidth}
		\centering
		\includegraphics[width=0.9\linewidth]{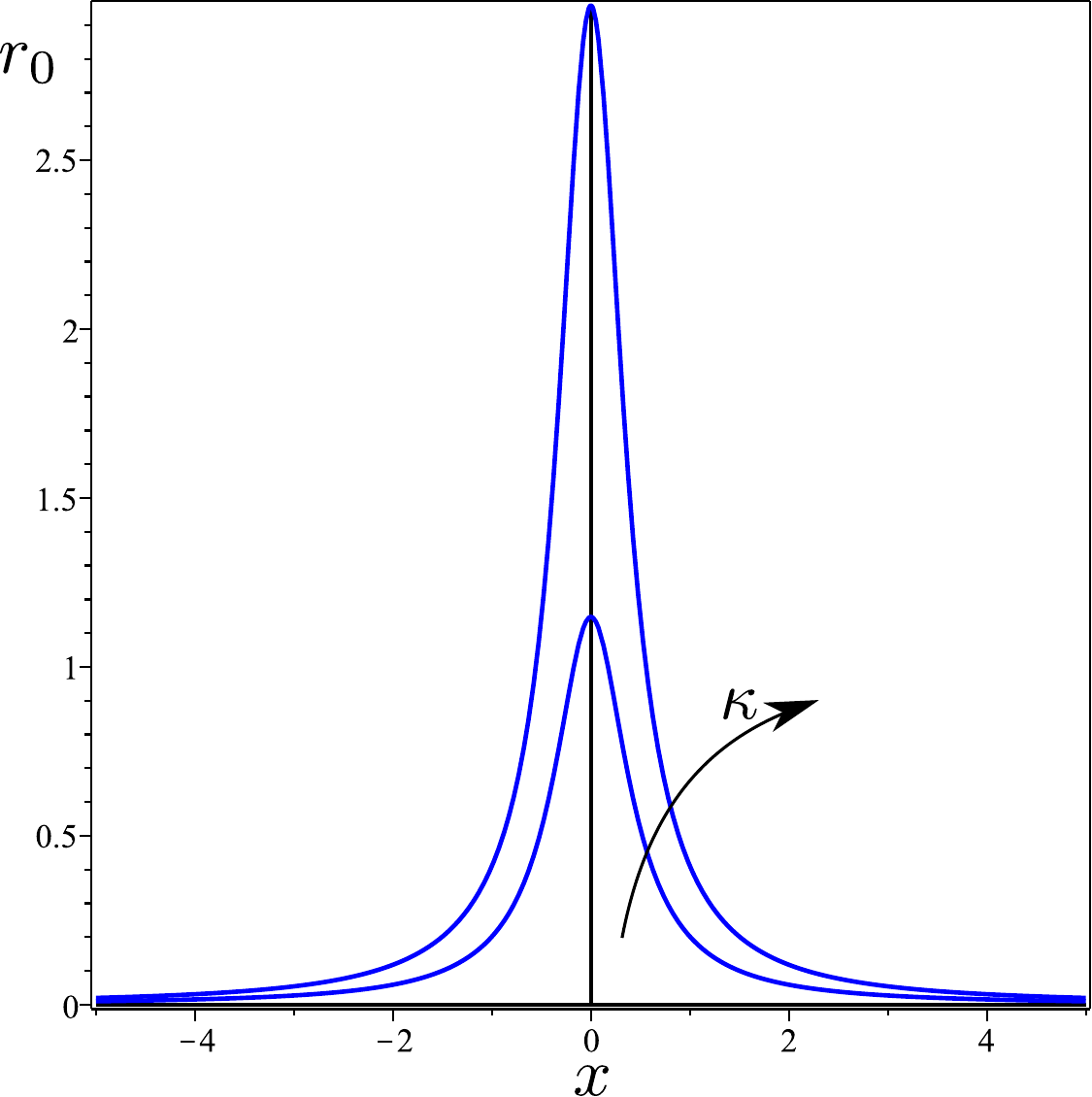}
		\caption{$\zeta=1/2$, $\kappa=\pi/4, \, \pi/2$.} 
		\label{fig:w_0arctankappa}
	\end{subfigure}
\caption{Initial profile of the Riemann function $r_0$ in \eqref{eq:r_0arctan} corresponding to  the initial twist $w_0$ illustrated in Fig.~\ref{fig:w_0arctan}.}
	\label{fig:r_0arctan}
\end{figure}

Here, $r_0(+\infty)=0$ and conditions \eqref{eq:sufficient_initial_conditions} and \eqref{eq:sufficient_initial_condition3_sec0} are satisfied for every $\alpha>0$. Thus, by Theorem~\ref{th:suff_cond} a singularity can occur in a finite time $t^\ast(\lambda,\alpha)\leq \tc(\lambda,\alpha)$, provided that there is a root of \eqref{eq:critical_time} for $\tc$ that obeys \eqref{integral_q_positive_approx}.

To compute the infimum of $\adm$ for any given $\lambda>0$, we find it convenient to  compute first the \emph{pre-critical} time $\widehat{t}_\mathrm{c}(\lambda)$ defined as
\begin{equation}
	\label{eq:pre_critical_time}
	\widehat{t}_\mathrm{c}(\lambda):=\inf_{\alpha\in\admh}\tc(\lambda,\alpha),
\end{equation}
where
\begin{equation}
	\label{eq:pre_admissible_set}
	\admh:=\{\alpha\in\mathbb{R}:\exists\ \tc\ \text{that solves \eqref{eq:critical_time}}\},
\end{equation}
and then use \eqref{integral_q_positive_approx} as a selection rule: we set $\tc(\lambda)=\widehat{t}_\mathrm{c}(\lambda)$, if \eqref{integral_q_positive_approx} is valid for $t=\widehat{t}_\mathrm{c}(\lambda)$ and $\alpha=\arginf\tc(\lambda,\alpha)$;\footnote{In most practical cases, the infimum is attained and \eqref{integral_q_positive_approx} is computed for $\alpha=\argmin\tc(\lambda,\alpha)$.} otherwise, we set $\tc(\lambda)=+\infty$. 

We start by studying the function $\etaLp(t,\alpha)$ in \eqref{eq:lower_bound_eta_delta}; its graph against $t>0$  is represented in Fig.~\ref{fig:eta_l_delta_graph} for different values of $\lambda>0$. As observed in Lemma~\ref{lemma:sign_eta} and Remark~\ref{rmk:eta_l_decreasing}, its behavior reproduces for $t>0$ that of $r_0(x)$ for $x\geq\alpha$: it is strictly decreasing and stays positive as time elapses.

\begin{figure}[] 
	\centering
	\includegraphics[width=0.45\linewidth]{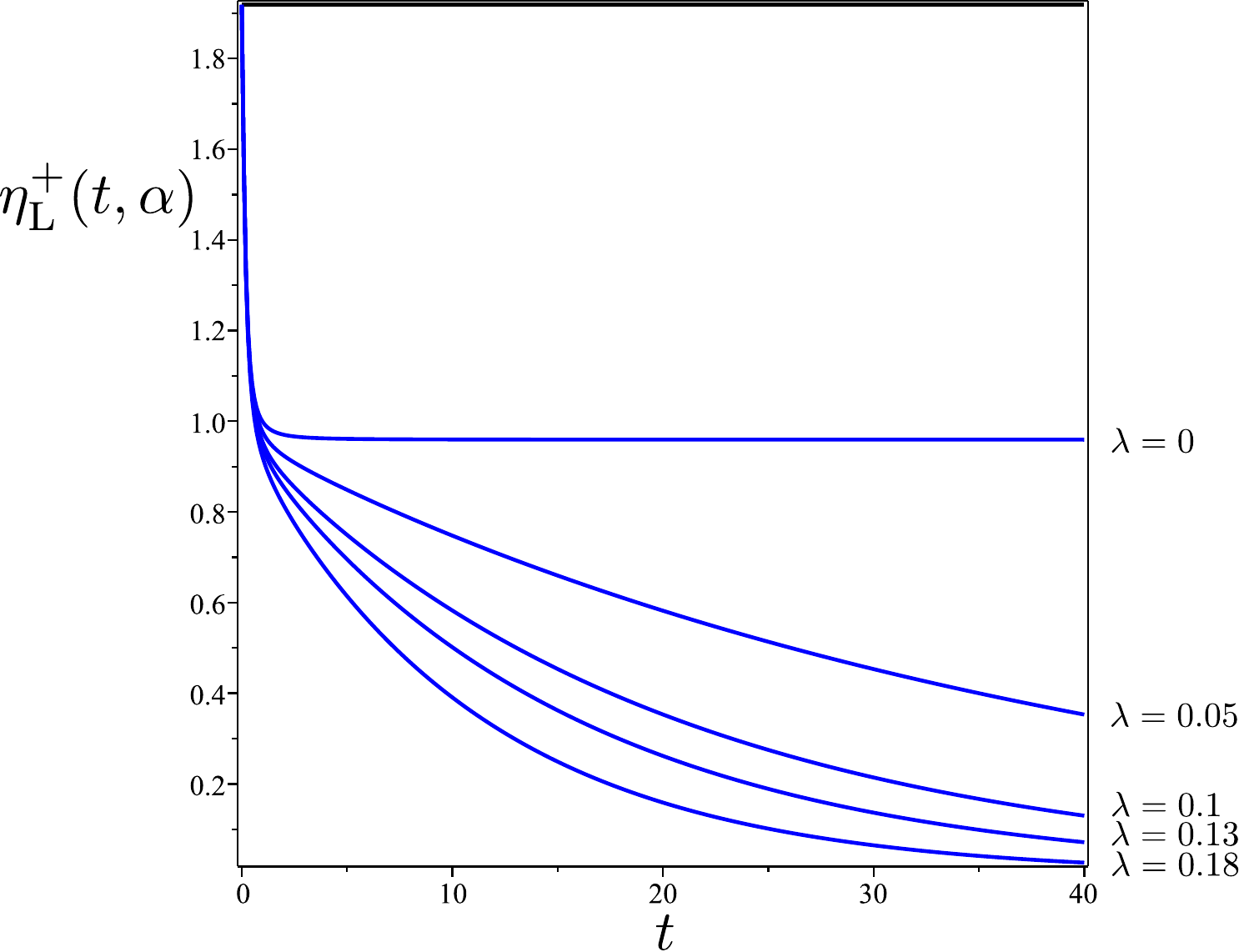}
	\caption{Graph of $\etaLp(t,\alpha)$, as defined in \eqref{eq:lower_bound_eta_delta}, plotted as a function of $t > 0$ for $\kappa = \frac{\pi}{4}$, $\zeta = \frac{1}{2}$, $\alpha=0.21$, and a sequence of values of $\lambda = 0, \, 0.05, \, 0.1, \, 0.13, \, 0.18$. The function is strictly decreasing and remains positive throughout the domain $t > 0$. The value $\alpha\doteq0.21$ minimizes $\tc(\lambda,\alpha)$ for every selected $\lambda$.}
	\label{fig:eta_l_delta_graph}
\end{figure}
Blue lines in Fig.~\ref{fig:T_crit_arctan} show how the pre-critical time $\tch(\lambda)$  depends on $\lambda$ for fixed $\kappa$ and $\zeta$. In accord with physical intuition, $\tch(\lambda)$ is a monotonically increasing function of $\lambda$: as $\lambda$ increases, the wave undergoes a greater damping, resulting in a stronger progressive attenuation over time and a corresponding delay in forming shocks. Furthermore, for given $\lambda$, $\tch(\lambda)$ decreases as $\kappa$ decreases and increases as $\zeta$ increases: a decrease in $\kappa$ or an increase in $\zeta$ results indeed  in a wider or lower initial profile, suggesting that the kink's core propagates more slowly, thus delaying the occurrence of shocks.  Figure~\ref{fig:T_crit_arctan} also brings to effect the selection rule \eqref{integral_q_positive_approx}; the \emph{acceptable range} of $\lambda$ for the validity of our theory is determined graphically as the maximum interval over which the red line is above the blue one. The cases shown in Fig.~\ref{fig:T_crit_arctan} seem to indicate that the acceptable range of $\lambda$ increases when either the amplitude of the kink increases or the width decreases.

\begin{figure}[]
	\centering
	\begin{subfigure}[c]{0.35\linewidth}
		\centering
		\includegraphics[width=\linewidth]{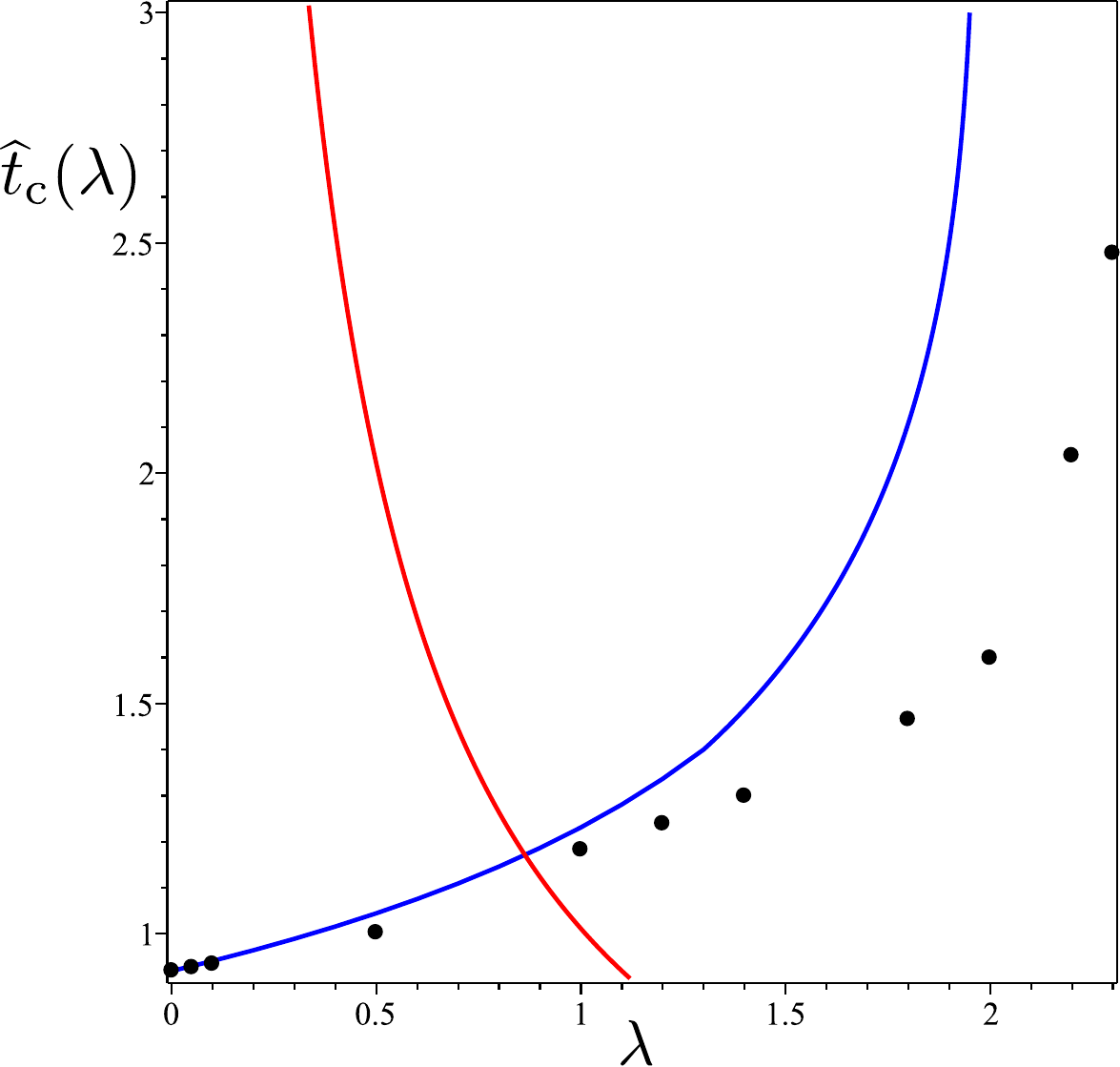}
		\caption{$\kappa=\pi/2$, $\zeta=1/2$. The acceptable range for $\lambda$ is $0\leq\lambda\leq0.87$.} 
		\label{fig:T_crit_arctanzeta}
	\end{subfigure}
	\begin{subfigure}[c]{0.35\linewidth}
		\centering
		\includegraphics[width=\linewidth]{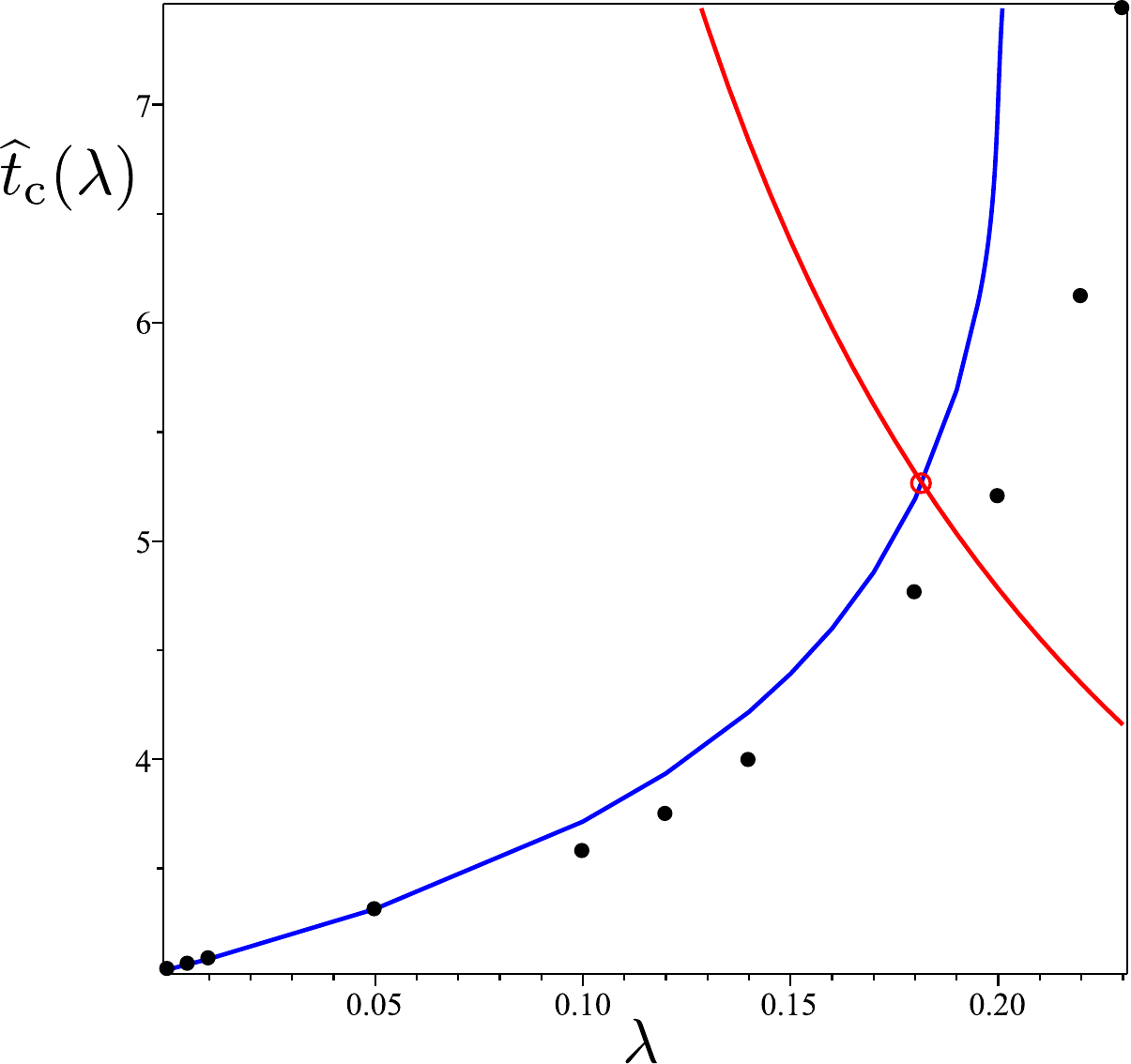}
		\caption{$\kappa=\pi/4$, $\zeta=1/2$. The acceptable range for $\lambda$ is $0\leq\lambda\leq0.18$.} 
		\label{fig:T_crit_arctankappa}
	\end{subfigure}
		\begin{subfigure}[c]{0.35\linewidth}
		\centering
		\includegraphics[width=\linewidth]{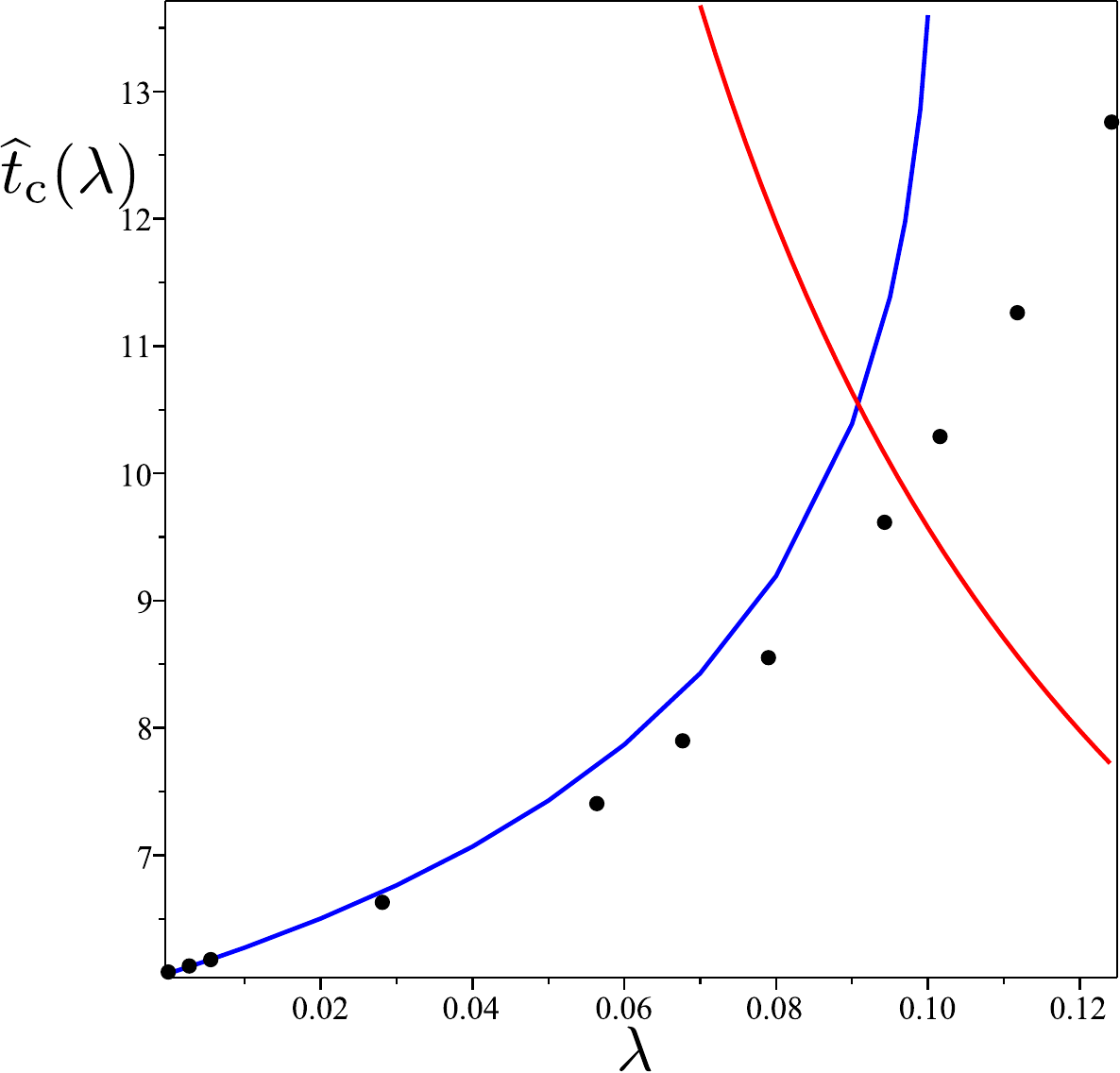}
		\caption{$\kappa=\pi/2$, $\zeta=1$. The acceptable range for $\lambda$ is $0\leq\lambda\leq0.09$.} 
		\label{fig:T_crit_arctankappa}
	\end{subfigure}
	\begin{subfigure}[c]{0.35\linewidth}
		\centering
		\includegraphics[width=\linewidth]{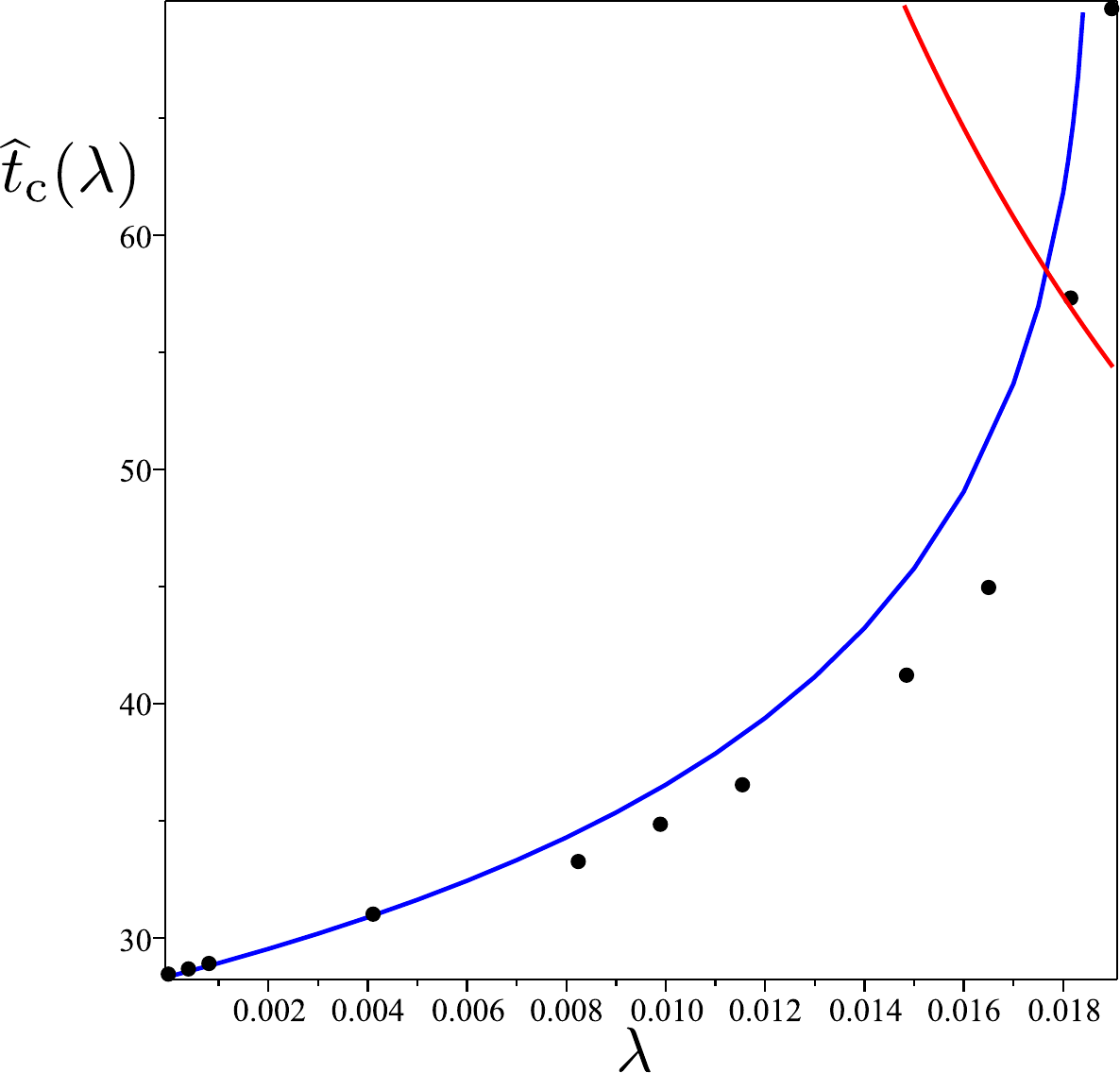}
		\caption{$\kappa=\pi/4$, $\zeta=1$. The acceptable range for $\lambda$ is $0\leq\lambda\leq0.018$.} 
		\label{fig:T_crit_arctankappa}
	\end{subfigure}
\caption{For $\lambda>0$, the time $t_\mathrm{c}(\lambda)$ in \eqref{eq:t_ast_definition} estimates the critical time $\tast$ at which a regular solution to the global Cauchy problem \eqref{eq:wave_system} subject to \eqref{eq:w_0_arctan} develpos a shock. For example, for $\lambda=0.18$, $\kappa=\pi/4$, and $\zeta=1/2$,  $t_\mathrm{c} \doteq 5.1$ and the infimum in \eqref{eq:t_ast_definition} is attained at $\alpha \doteq 0.21$ (see red dot in panel (b)). Red lines represent  the effect  of the selection rule \eqref{integral_q_positive_approx}: the only acceptable pre-critical times $\tch(\lambda)$ are those that fall below these lines.  Black dots represent  critical times $\tast$ computed numerically for various values of $\lambda$.}
	\label{fig:T_crit_arctan}
\end{figure}

Numerical solutions of the Cauchy problem \eqref{eq:wave_system} closely corroborate our theoretical predictions. Black dots in Fig.~\ref{fig:T_crit_arctan} represent critical times $\tast$ computed numerically for different values of $\lambda \geq 0$, with fixed $\kappa$ and $\zeta$. Within the range of validity of our theory, these numerical values are in close agreement with the estimated values of the pre-critical times $\tch(\lambda)$.

We paid special attention to the case where in \eqref{eq:w_0_arctan} $\kappa=\pi/4$ and $\zeta=1/2$;  for these specific values of $\kappa$ and $\zeta$,  $\lambda=0.18$ is the upper acceptable limit for the validity of  our theory. The initial profile generates two symmetric waves propagating in opposite directions. We computed the conserved quantity $M$ and the decaying quantity $E$ introduced in Propositions~\ref{prop:mass_conservation} and \ref{prop:dissipation_law}, respectively, and used them to monitor the accuracy of our numerical solutions. Our calculations indicate the existence of a critical time, estimated as $t^\ast_\lambda\doteq4.7$, at which the solution exhibits a singularity. Fig.~\ref{fig:num_sol_arctan} illustrates a typical  numerical solution $\twist(t,x)$ and its spatial derivatives $\partial_{x}\twist(t,x)$ and $\partial_{xx}\twist(t,x)$ for  a sequence of times $t$ in the interval $[0,\tast)$. A  snapshot at $t=\tast$ is shown in Fig.~\ref{fig:num:sol_t_crit_arctan}.
The observed behavior is in good agreement with our theory: a shock is formed in a finite time, at which $\partial_{x}\twist$ becomes discontinuous, and  second derivatives diverge.

The critical time identified numerically agrees with our  theoretical upper estimate $t_\mathrm{c}(\lambda)\doteq5.1$; the infimum in \eqref{eq:t_ast_definition} is correspondingly attained for $\alpha\doteq0.21$. 
\begin{figure}[]
	\centering
	\begin{subfigure}[c]{0.35\linewidth}
		\centering
		\includegraphics[width=\linewidth]{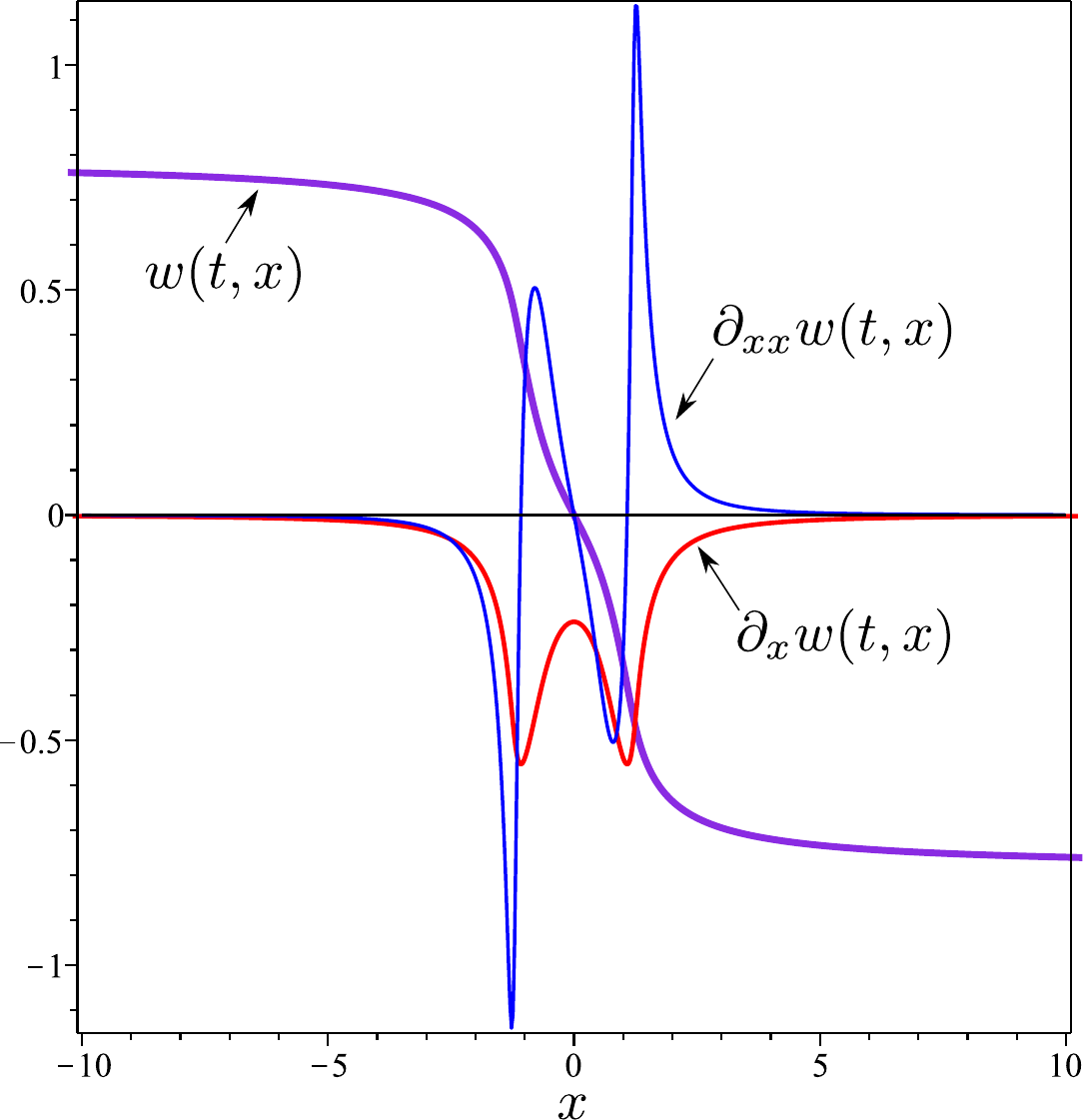}
		\caption{$t=0.9$} 
		\label{fig:t_015}
	\end{subfigure}
	\quad
	\begin{subfigure}[c]{0.35\linewidth}
		\centering
		\includegraphics[width=\linewidth]{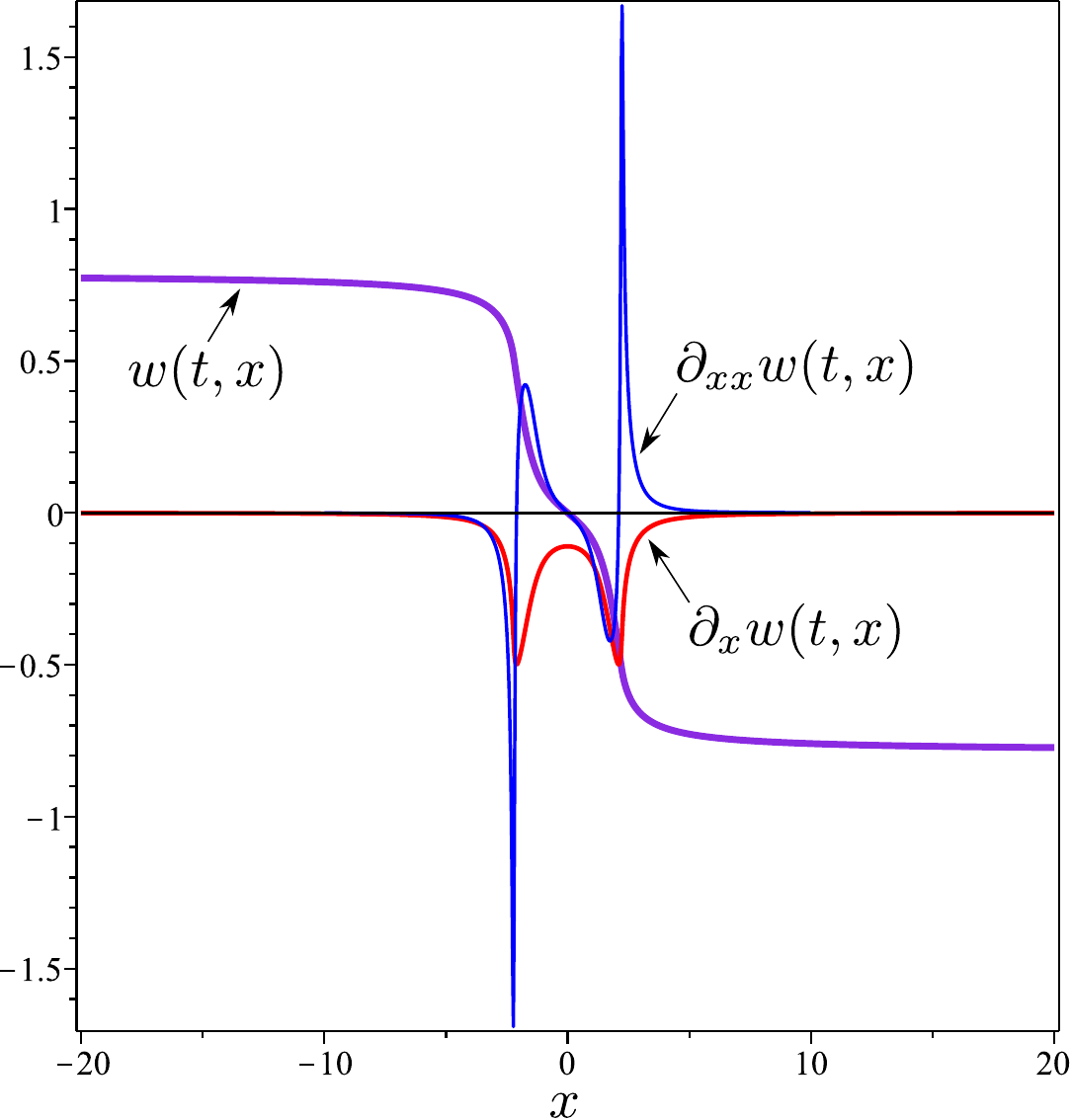}
		\caption{$t=1.8$} 
		\label{fig:t_03}
	\end{subfigure}
	\begin{subfigure}[c]{0.35\linewidth}
		\centering
		\includegraphics[width=\linewidth]{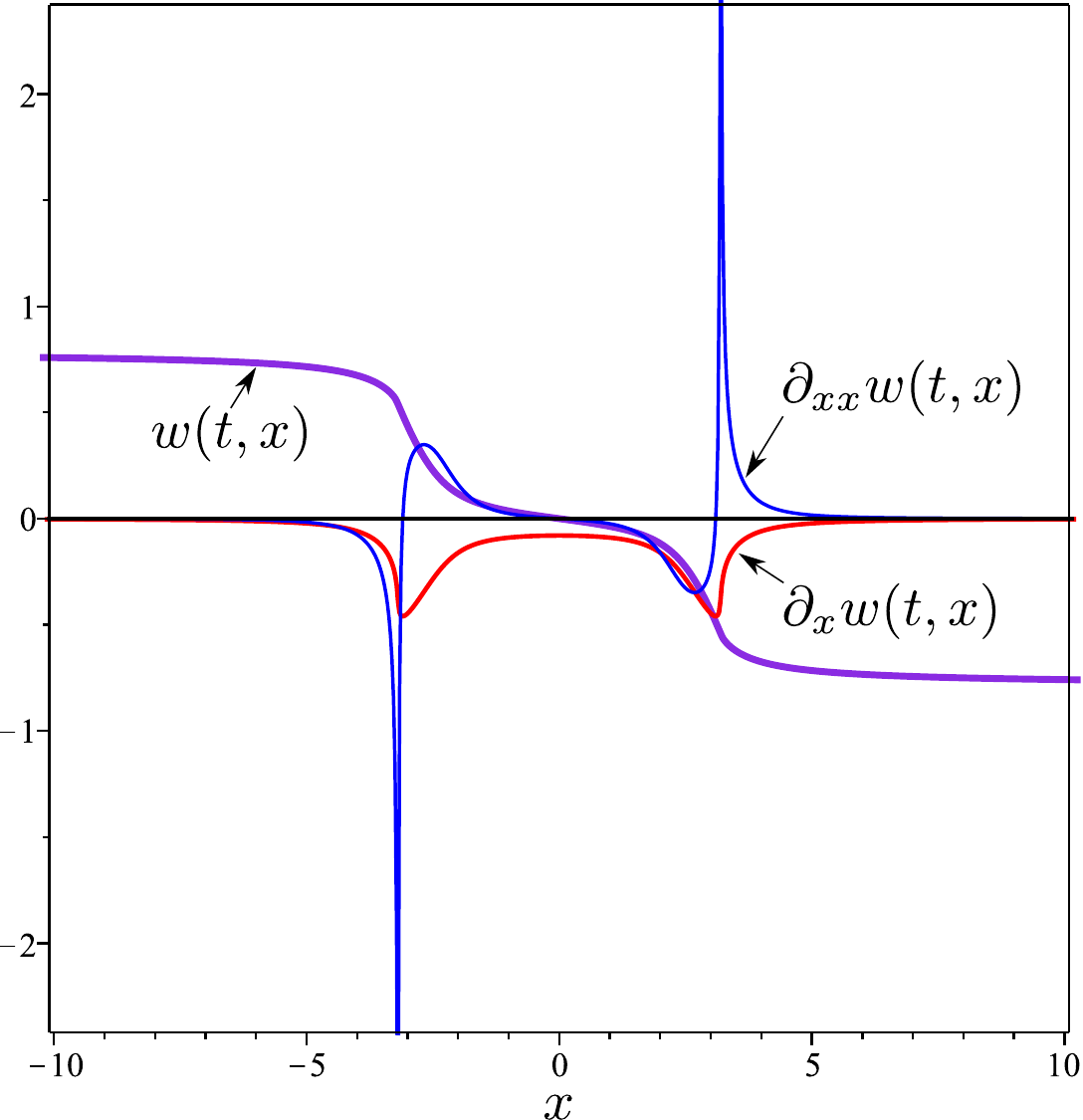}
		\caption{$t=2.7$} 
		\label{fig:t_27}
	\end{subfigure}
	\quad
	\begin{subfigure}[c]{0.35\linewidth}
		\centering
		\includegraphics[width=\linewidth]{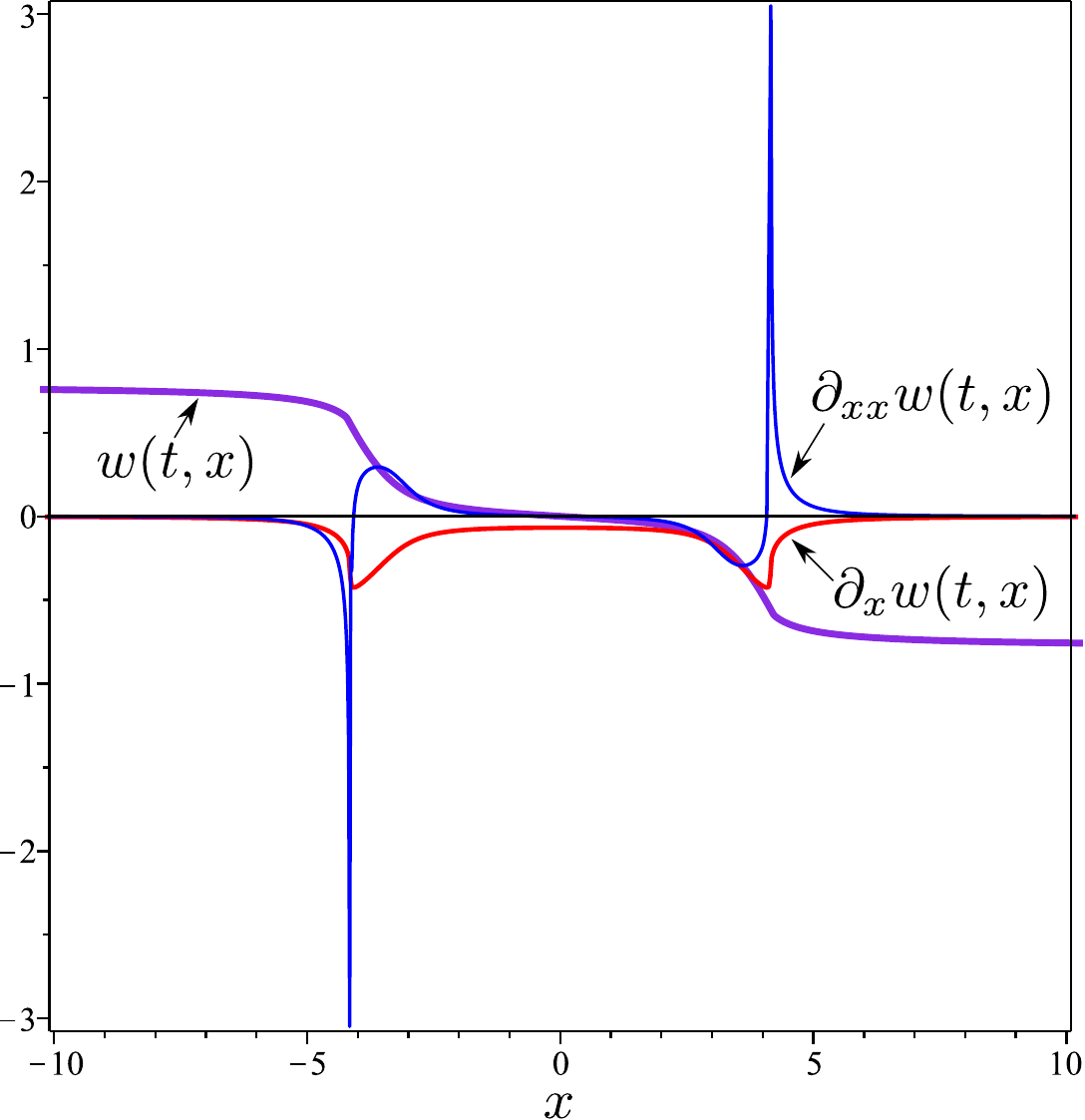}
		\caption{$t=3.6$} 
		\label{fig:t_36}
	\end{subfigure}
\caption{Graphs of the numerical solution $\twist(t,x)$ and its spatial derivatives $\partial_{x}\twist(t,x)$ and $\partial_{xx}\twist(t,x)$ for the initial profile $w_0$ in \eqref{eq:w_0_arctan} with $\kappa = \pi/4$, $\zeta=1/2$, and $\lambda=0.18$. The time sequence suggests that $\partial_{xx}w$ tends to develop two antisymmetric spikes, whereas $\partial_{x}w$ tends to develop symmetric jumps.}
	\label{fig:num_sol_arctan}
\end{figure}

\begin{figure}[]
			\centering
		\includegraphics[width=0.4\linewidth]{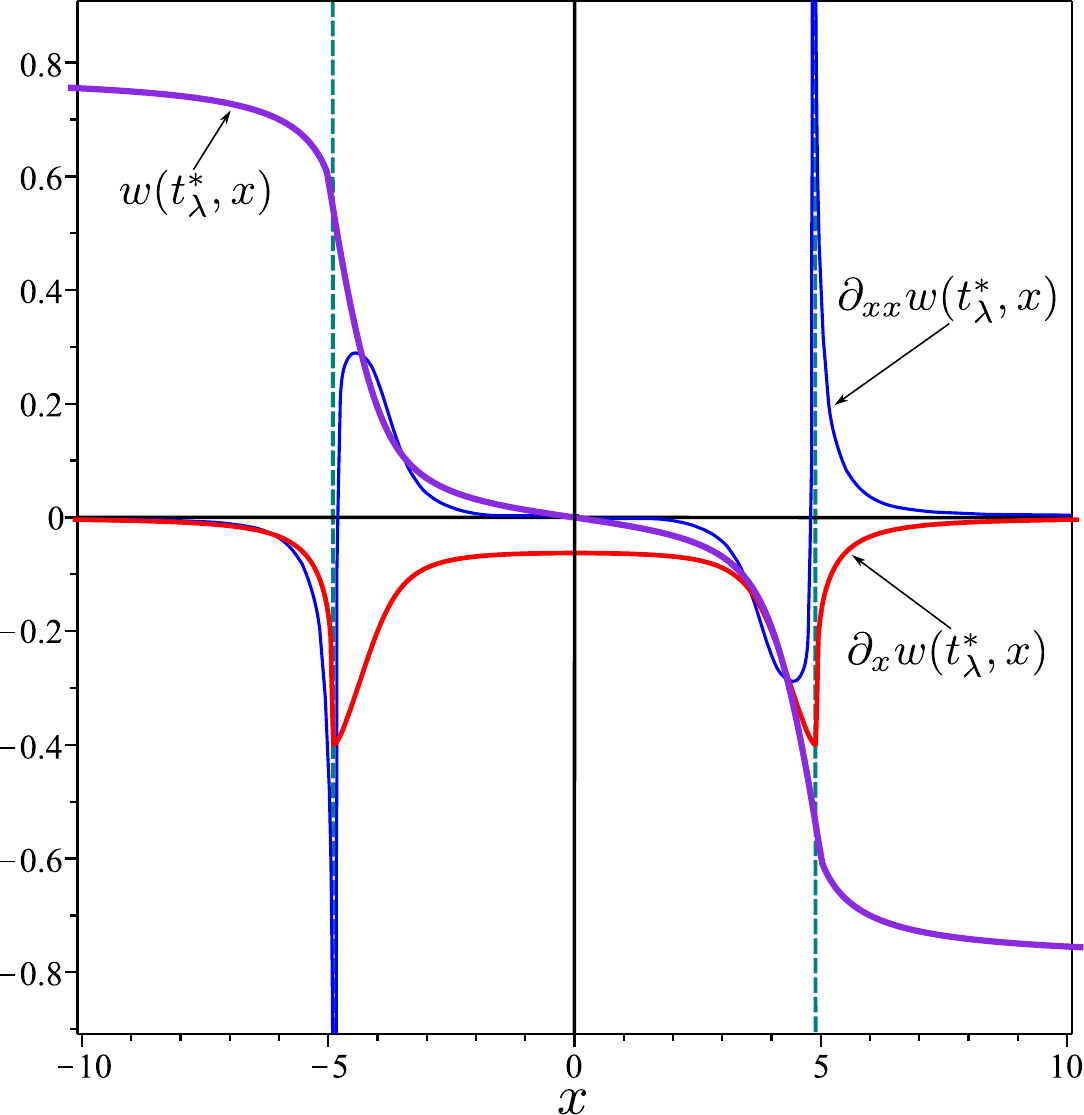}
		\caption{Singularity emerging  at the critical time $\tast\doteq4.7$ along  the solution of \eqref{eq:wave_system} with initial profile \eqref{eq:w_0_arctan} for $\kappa = \pi/2$, $\zeta=1/2$, and $\lambda=0.18$. The figure shows the numerical solution $\twist(\tast, x)$, along with its spatial derivatives $\partial_{x}\twist(\tast, x)$ and $\partial_{xx}\twist(t^\ast_\lambda, x)$, evaluated at the critical time $\tast \doteq 4.7$.}
	\label{fig:num:sol_t_crit_arctan}
\end{figure}

\section{Conclusions}\label{sec:conclusion}
The relaxation dynamics of liquid crystals involves only the motion of the director which represents the average molecular orientation; no hydrodynamic flow is entrained. The director motion is hampered by rotational viscosity and, in ordinary nematic liquid crystals, it drives the fluid \emph{smoothly} towards equilibrium.

Such a state of affairs, common to many dissipative systems, does \emph{not} apply to chromonic liquid crystals. They are as dissipative as ordinary nematic liquid crystals, but their elasticity is better represented by a free-energy density \emph{quartic} in the director gradient, instead of quadratic, as in the celebrated Oseen-Frank formula for ordinary nematics. This peculiar aspect of chromonics affects deeply the propagation of twist waves. It was proved in \cite{paparini:singular} that in the inviscid limit, where all sources of dissipation are neglected, a twist wave generally develops a singularity, giving rise to a \emph{shock} wave in finite time.

The present study, which is the ideal sequel of \cite{paparini:singular}, was concerned with the possible regularizing effect of dissipation. We asked whether a finite degree of dissipation could systematically prevent singularities from arising, thus leading smoothly the system to stillness.

We answered this question for the \emph{negative}: We provided a criterion for the choice of the initial data that guarantees the formation of a shock wave and estimated the critical time for its manifestation. This criterion was also subject to a numerical validation; for a class of initial director profiles our estimate for the critical time proved to be accurate, more so for small values of the dimensionless parameter $\lambda$ that weights dissipation against elasticity.

This is the main achievement of our paper, but it came at a price. A selection rule had to be enforced on the time interval where our analytical method is applicable. Such a rule effectively reduces the range of acceptable values of $\lambda$ in a manner influenced by the initial director profile.

Although, we provided a tool to probe the range of confidence of our method, this limitation can hardly be ignored. It is likely to stem from the role played here by the Riemann functions: they are no longer invariant in the dissipative setting, but they are still the main players in our analysis. We might have pushed this method to the extreme and paid a price in generality.

We have established that dissipation does nor prevent shocks to develop in chromonic twist waves, but an important question remains unanswered: is there a critical value of $\lambda$ (possibly affected by the initial director profile) above which singularities are banned and twist waves fade away uneventfully? The answer to this might come from an approach different from the one taken here.

\begin{acknowledgements}
	Both authors are members of \emph{GNFM}, a branch of \emph{INdAM}, the Italian Institute for Advanced Mathematics. For the most part, this work was performed while S.P. was holding a Postdoctoral Fellowship with the Department of Mathematics of the University of Pavia. Their kind hospitality during later times, while this paper was completed, is gratefully acknowledged.
\end{acknowledgements}
\appendix

\section{Auxiliary Results}\label{sec:aux_res} 
In this Appendix, we collect a number of auxiliary results needed in the main text, including the proofs of Lemmata~\ref{lemma:r_l_bounded} and \ref{lemma:w_x_t_infty}, and that of Propositions~\ref{th:infinitesimal_compression_ratios} and ~\ref{prop:beta_t_alpha}.

We start by stating a comparison theorem (see Theorem~1.3 of \cite{teschl:ordinary}).
\begin{theorem}[Comparison Theorem]
If $g(t,x)$ is a function locally Lipschitz continuous in  $x$ and uniformly continuous  in $t$, and  if $x(t)$ and $y(t)$ are two differentiable functions such that
\begin{equation}
x(t_0) \leq y(t_0)\quad\text{and}\quad  \dot x(t)-g(t,x(t))\leq\dot y(t)-g(t,y(t))\quad\text{for every } t\in[t_0,t^*),
\end{equation}
then
\begin{equation}
x(t) \leq y(t)\quad \text{for every }t\in[t_0,t^*).
\end{equation}
\end{theorem}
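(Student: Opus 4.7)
The plan is to prove the comparison theorem by the classical perturbation (or ``penalty'') argument: we thicken $y$ by an exponentially growing shift and show that $x$ cannot catch up with the thickened version, then let the shift vanish.

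Fix any $T\in[t_0,t^\ast)$. Since $x$ and $y$ are continuous on the compact interval $[t_0,T]$, there exists $M>0$ such that the graphs of $x$ and $y$ are contained in $[t_0,T]\times[-M,M]$. By the local Lipschitz hypothesis on $g$, there exists a constant $L>0$ such that
\begin{equation}
|g(t,u)-g(t,v)|\leq L\,|u-v|\quad\text{for all }(t,u),(t,v)\in[t_0,T]\times[-M-1,M+1].
\end{equation}
For $0<\varepsilon<1$, introduce the perturbation
\begin{equation}
y_\varepsilon(t):=y(t)+\varepsilon\,e^{2L(t-t_0)},\quad t\in[t_0,T],
\end{equation}
which is differentiable, satisfies $y_\varepsilon(t_0)=y(t_0)+\varepsilon>x(t_0)$, and stays within the region on which the Lipschitz estimate above applies (for $\varepsilon$ chosen small enough that $\varepsilon e^{2L(T-t_0)}<1$).

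The key claim is that $x(t)<y_\varepsilon(t)$ for every $t\in[t_0,T]$. The argument is by contradiction: set $\phi:=x-y_\varepsilon$, a continuous, differentiable function with $\phi(t_0)<0$. If the claim fails, then $t_1:=\inf\{t\in[t_0,T]:\phi(t)\geq0\}$ satisfies $t_1>t_0$, $\phi(t_1)=0$, and $\phi(t)<0$ for $t\in[t_0,t_1)$; consequently $\dot\phi(t_1)\geq0$, i.e.\ $\dot x(t_1)\geq\dot y_\varepsilon(t_1)=\dot y(t_1)+2L\varepsilon e^{2L(t_1-t_0)}$. On the other hand, the hypothesis of the theorem at $t=t_1$ combined with $x(t_1)=y_\varepsilon(t_1)=y(t_1)+\varepsilon e^{2L(t_1-t_0)}$ and the Lipschitz bound yields
\begin{equation}
\dot x(t_1)\leq\dot y(t_1)+g(t_1,x(t_1))-g(t_1,y(t_1))\leq\dot y(t_1)+L\,\varepsilon e^{2L(t_1-t_0)},
\end{equation}
which contradicts the previous inequality because $L<2L$. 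Hence the claim holds for every $\varepsilon$ in some interval $(0,\varepsilon_0)$.

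Finally, passing to the limit $\varepsilon\to 0^+$ in $x(t)<y(t)+\varepsilon e^{2L(t-t_0)}$ gives $x(t)\leq y(t)$ on $[t_0,T]$; since $T<t^\ast$ is arbitrary, the conclusion follows on $[t_0,t^\ast)$. The main subtlety I expect to encounter is not in the core argument (which is routine once framed this way) but in justifying the two technical points that make the contradiction airtight: (i) the existence of the ``first crossing time'' $t_1$, which requires only continuity of $\phi$ and the strict initial inequality $\phi(t_0)<0$; and (ii) the fact that, at $t_1$, both $x(t_1)$ and $y(t_1)$ lie in the neighbourhood on which the Lipschitz constant $L$ was chosen, which is guaranteed by the a priori bound $M$ and by taking $\varepsilon$ small enough. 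The uniform continuity of $g$ in $t$ is not strictly needed for the argument above; the local Lipschitz continuity in $x$ together with continuity in $t$ suffices.
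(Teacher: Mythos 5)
Your proof is correct. Note that the paper does not actually prove this statement: it is quoted verbatim from Theorem~1.3 of Teschl's ODE book, with the proof delegated to that reference, so there is no in-paper argument to compare against. Your $\varepsilon$-penalty/first-crossing argument is a complete, self-contained, and standard proof: the localization to a compact window $[t_0,T]\times[-M-1,M+1]$ to extract a uniform Lipschitz constant $L$, the strict separation $y_\varepsilon(t_0)>x(t_0)$, the existence and properties of the first crossing time $t_1$ (closedness of $\{\phi\geq0\}$ plus continuity), the sign of $\dot\phi(t_1)$ from the left-difference quotients, and the decisive gap between $L$ and $2L$ in the exponent are all handled properly, as is the final limit $\varepsilon\to0^+$. (Teschl's own proof runs instead through a Gronwall-type estimate $\dot\Delta\leq L\Delta$ on maximal intervals where $\Delta:=x-y>0$; both routes are standard and of comparable length.) The only reading you should make explicit is that ``locally Lipschitz in $x$'' is taken, as is conventional, to mean Lipschitz on compact sets with a constant uniform in $t$; your remark that uniform continuity in $t$ is not needed beyond plain continuity is accurate under that convention.
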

To apply this theorem to \eqref{eq:comparison_v_zeta}, it suffices to take
\begin{equation}
	\label{eq:choice_g}
	g(t,x)=G(t)x^2,
\end{equation}
where $G(t):=A(t)^{-1}f(\eta(t,x_1(t,\alpha)))$. The function $g$ in \eqref{eq:choice_g} is clearly locally Lipschitz continuous in $x$  and uniformly continuous in $t$, given the regularity of the solution $\eta(t,x_1(t,\alpha))$ and its boundedness (by Lemma~\ref{lemma:r_l_bounded}).

\begin{proof}[Proof of Lemma~\ref{lemma:r_l_bounded}]
We only prove \eqref{eq:bounded_r_l_plus}. On the characteristic curves $x=x_1(t,\alpha_0)$ and $x=x_2(t,\beta_0)$ with $\alpha_0$ and $\beta_0$ selected so that these curves pass through the points $(t_0,x_0)$ and $(t_0,y_0)$, respectively, by \eqref{eq:A_r_l}, we obtain that
\begin{equation}
\label{eq:sum_r_l}
A(t)\left(r(t_0,x_0)-\ell(t_0,y_0)\right)=r_0(\alpha_0)+\ell_0(\beta_0)+\frac{\lambda}{2}\int_0^{t_0}A(\tau)\left[r(\tau,x_2(\tau,\beta))-\ell(\tau,x_1(\tau,\alpha))\right]\dd\tau.
\end{equation}
By the definition of $\varphi=\varphi(t,x_0,y_0)$ in Lemma~\ref{lemma:r_l_bounded}, this equation is equivalent to
\begin{equation}
\label{eq:phi_r_l}
A(t)\varphi(t,x,y)=\varphi(0,\alpha,\beta)-\frac{\lambda}{2}\int_0^{t}A(\tau)\varphi(\tau,x_1(\tau,\alpha),x_2(\tau,\beta))\dd\tau,
\end{equation}
and by taking the $L^{\infty}$-norm of both sides of \eqref{eq:phi_r_l} with $(x,y)\in\mathbb{R}^2$, we arrive at
\begin{equation}
\label{eq:phi_r_l_norm}
A(t)||\varphi(t,\cdot,\cdot)||_{L^{\infty}(\mathbb{R}^2)}\leq||\varphi(0,\cdot,\cdot)||_{L^{\infty}(\mathbb{R}^2)}+\frac{\lambda}{2}\int_0^{t}A(\tau)||\varphi(\tau,\cdot,\cdot)||_{L^{\infty}(\mathbb{R}^2)}\dd\tau.
\end{equation}
By the classical Gr\"onwall's inequality, it then follows that 
\begin{equation}
\label{eq:phi_r_l_norm1}
A(t)||\varphi(t,\cdot,\cdot)||_{L^{\infty}(\mathbb{R}^2)}\leq A(t)||\varphi(0,\cdot,\cdot)||_{L^{\infty}(\mathbb{R}^2)},
\end{equation}
which implies inequality \eqref{eq:bounded_r_l_plus} in the main text. Inequality \eqref{eq:bounded_r_l_minus} can be proved in precisely the same way.
\end{proof}

\begin{proof}[Proof of Lemma~\ref{lemma:w_x_t_infty}]
Since $\partial_{x}w(t,x)$ can be expressed through \eqref{eq:first_order_unknowns} and \eqref{eq:u_1_rl} in terms of $\eta=r-\ell$,
\begin{equation}
\label{eq:w_x_eta}
\partial_{x}w(t,x)=-L^{-1}\left(\frac12\eta(t,x)\right),
\end{equation}
we prove Lemma~\ref{lemma:w_x_t_infty} by studying the behaviour of $\eta(t,x)$ for $|x|\to\infty$. We set
\begin{subequations}
\label{eq:S_I_t}
\begin{align}
S^\pm(t):=&\limsup_{x\to\pm\infty}\eta(t,x),\\
I^\pm(t):=&\liminf_{x\to\pm\infty}\eta(t,x),
\end{align}
\end{subequations}
and we consider the characteristic curves $x=x_1(t,\alpha_0)$ and $x=x_2(t,\beta_0)$ with $\alpha_0$ and $\beta_0$ selected so that these curves meet at a given point $(t_0,x_0)$; as long as the solution remains regular, they are uniquely identified (see Proposition~\ref{prop:beta_t_alpha} in the main text). Then, by \eqref{eq:A_r_l}, we can express $\eta(t_0,x_0)$ as 
\begin{equation}
\label{eq:sum_r_l_u1}
A(t)\eta(t_0,x_0)=r_0(x_1(0,\alpha_0))-\ell_0(x_2(0,\beta_0))+\frac{\lambda}{2}\int_0^{t_0}A(\tau)\left(r(\tau,x_2(\tau,\beta_0))-\ell(\tau,x_1(\tau,\alpha_0))\right)\dd\tau.
\end{equation}

Since for any given $t$, $\lim_{\alpha\to\pm\infty}x_1(t,\alpha)=\lim_{\beta\to\pm\infty}x_2(t,\alpha)$, by the arbitrariness of $(t_0,x_0)$, it follows from \eqref{eq:sum_r_l_u1} that
\begin{align}
\label{eq:A_t_S_t}
A(t_0)&S^\pm(t_0)=\limsup_{|\alpha_0|,|\beta_0|\to\infty}\left(r_0(x_1(0,\alpha_0))-\ell_0(x_2(0,\beta_0))\right)+\frac{\lambda}{2}\limsup_{x_0\to\pm\infty}\int_0^{t_0}A(\tau)\left(r(\tau,x_2(\tau,\beta_0))-\ell(\tau,x_1(\tau,\alpha_0))\right)\dd\tau\nonumber\\
&\leq r_0(\pm\infty)-\ell_0(\pm\infty)+\frac{\lambda}{2}\lim_{R\to\infty}\int_0^{t_0}\dd\tau A(\tau)\begin{cases}
\sup\left\{r(\tau,x_2(\tau,\beta_0))-\ell(\tau,x_1(\tau,\alpha_0)), x_0\geq R\right\} & \hbox{if } x_0\to+\infty\\
\sup\left\{r(\tau,x_2(\tau,\beta_0))-\ell(\tau,x_1(\tau,\alpha_0)), x_0\leq -R\right\} & \hbox{if } x_0\to-\infty
\end{cases}\nonumber\\
&\leq r_0(\pm\infty)-\ell_0(\pm\infty)+\int_0^{t_0} A(\tau)S^\pm(\tau) \dd\tau,
\end{align}
where the last inequality is obtained through the Lebesgue Convergence Theorem,
since the function $r(\tau,x_2(\tau,\beta_0))-\ell(\tau,x_1(\tau,\alpha_0))$ is bounded for every $\tau\in[0,t_0)$ and for every $t_0\in[0,\tast)$ by \eqref{eq:bounded_r_l_minus}. By the classical Gr\"onwall's inequality, we then obtain that
\begin{equation}
\label{eq:S_t_bounded}
S^\pm(t_0)\leq r_0(\pm\infty)-\ell_0(\pm\infty).
\end{equation}
In a similar way, we also prove that
\begin{equation}
\label{eq:I_t_lower_bound}
I^\pm(t_0)\geq r_0(\pm\infty)-\ell_0(\pm\infty).
\end{equation}
Since $S^\pm(t_0)\geq I^\pm(t_0)$ by definition, \eqref{eq:S_t_bounded} and \eqref{eq:I_t_lower_bound} imply that
\begin{equation}
\label{eq:L_u1_infinity}
S^\pm(t_0)=I^\pm(t_0)=\lim_{x_0\to\pm\infty}\left(r(t_0,x_0)-\ell(t_0,x_0)\right)=r_0(\pm\infty)-\ell_0(\pm\infty),
\end{equation}
which, by \eqref{eq:w_x_eta}, proves \eqref{eq:lim_infinity_wx}.

Similarly, equation \eqref{eq:lim_infinity_wt} is proved by writing
\begin{equation}
\partial_{t}w(t,x)=\frac12(r(t,x)+\ell(t,x)),
\end{equation}
which follows from \eqref{eq:riemann_system}.
\end{proof}
\begin{proof}[Proof of Proposition~\ref{th:infinitesimal_compression_ratios}]
By differentiating  both sides of the first equation in \eqref{eq:x_1} with respect to $\alpha$, we find that
\begin{equation}
\label{eq:d_alpha_tx_1}
\partial_\alpha\left(\dfrac{\dd x_1}{\dd t}(t,\beta)\right)=\partial_\alpha k(\eta(t,x_1(t,\alpha)))=k'\left(\eta(t,x_1(t,\alpha))\right)\left(\partial_{x}r(t,x_1(t,\alpha))-\partial_{x}\ell(t,x_1(t,\alpha))\right)\partial_\alpha x_1(t,\alpha).
\end{equation}
Letting
\begin{equation}
\label{eq:H_def}
H(\eta):=\int_0^\eta\dfrac{k'(y)}{2k(y)}\dd y=\frac{1}{2}\ln k(\eta),
\end{equation}
where also \eqref{eq:k_l_r} has been used, by \eqref{eq:partial_plus} we arrive at
\begin{equation}
\label{eq:d_t_H}
\frac{\dd}{\dd t}H(\eta(t,x_1(t,\alpha)))=\frac{k'(\eta(t,x_1(t,\alpha)))}{2k(\eta(t,x_1(t,\alpha)))}\partial^+\eta=-k'(\eta(t,x_1(t,\alpha)))\partial_{x}\ell(t,x_1(t,\alpha)).
\end{equation}
Then, \eqref{eq:d_alpha_tx_1} reduces to
\begin{equation}
\label{eq:ode_x_1_alpha}
\frac{\dd}{\dd t}c_1(t,\alpha)=\left[k'(\eta(t,x_1(t,\alpha)))\partial_{x}r(t,x_1(t,\alpha))+\frac{\dd}{\dd t}H(\eta(t,x_1(t,\alpha)))\right]c_1(t,\alpha),
\end{equation}
which can be set in the form
\begin{equation}
\label{eq:ode_gen}
\dot{c}_1-\dot{h} c_1=0,
\end{equation}
where, also by \eqref{eq:y_def},
\begin{equation}
\label{eq:ode_ingredients}
h(t,\alpha):=\int_0^tA(\tau)^{-1}f(\eta(\tau,x_1(\tau,\alpha)))p(\tau,x_1(\tau,\alpha))\dd\tau+H(\eta(t,x_1(t,\alpha))).
\end{equation}
The solution of \eqref{eq:ode_gen} subject to the initial condition $c_1(0,\alpha)=1$ is the following,
\begin{equation}
\label{eq:solution_ode_gen}
c_1(t,\alpha)=\mathrm{e}^{h(t,\alpha)-h(0,\alpha)}.
\end{equation}
Since, by \eqref{eq:ode_ingredients} and \eqref{eq:IC_riemann}, $h(0,\alpha)=H(r_0(\alpha)-\ell_0(\alpha))=H(2r_0(\alpha))$, \eqref{eq:solution_ode_gen} delivers  equation \eqref{eq:infinitesimal_compression_ratios1_expr} in the main text.

Equation \eqref{eq:infinitesimal_compression_ratios2_expr} is obtained in a similar way.
\end{proof}
\begin{proof}[Proof of Proposition~\ref{prop:beta_t_alpha}]
The existence of the unique solution $\beta=\beta(t,\alpha)$ is a consequence of the fact that the Cauchy problem
\begin{equation}
\label{eq:backward_characteristic}
\begin{cases}
\frac{\dd y_2}{\dd \tau}(\tau,t,\alpha)=-k(\tau,y_2(\tau,t,\alpha)), \\
y_2(t,t,\alpha)=x_1(t,\alpha),
\end{cases}
\end{equation}
has a unique solution $y_2(\tau,t,\alpha)$ for every $\tau\in[0,\tast)$. We set $\beta(t,\alpha):=y_2(0,t,\alpha)$ and $y_2(\tau,t,\alpha):=x_2(\tau,\beta(t,\alpha))$, where $x_2$ is  solution of \eqref{eq:x_2}. Thus, $\beta(t,\alpha)$ represents the point from which the backward characteristic $x_2$ starts. By \eqref{eq:x_2},  we conclude that $\beta(t,\alpha)>\alpha$ for every $t\in(0,\tast)$ and $\beta(0,\alpha)=\alpha$.
By differentiating both sides of equation \eqref{eq:beta_of_alpha_implicit_definition} with respect to $t$, and using \eqref{eq:x_1} and \eqref{eq:x_2}, we obtain \eqref{eq:d_t_beta}.

Next, by integrating both sides of \eqref{eq:x_1} with respect to $t$ and using \eqref{eq:estimates}, we arrive at
\begin{subequations}
\label{eq:upperbound_x1_x2}
\begin{align}
x_1(t,\alpha)&=\alpha+\int_0^{t}k(r(\alpha)-\ell(\tau,x_1(\tau,\alpha)))\dd\tau\leq\alpha +\delta t,\label{eq:upperbound_x1}\\
x_2(t,\beta(t,\alpha))&=\beta(t,\alpha)-\int_0^{t}k(r(\tau,x_2(\tau,\beta(t,\alpha)))-\ell_0(\beta(t,\alpha)))\dd\tau\geq \beta(t,\alpha) -\delta t. \label{eq:upperbound_x2}
\end{align}
\end{subequations}
By \eqref{eq:beta_of_alpha_implicit_definition} and \eqref{eq:upperbound_x1}, \eqref{eq:upperbound_x2} gives 
\begin{equation}
\label{eq:upperbound_beta}
\beta(t,\alpha)\leq\alpha+2\delta t,
\end{equation} 
which is the first property stated in \eqref{eq:beta_confined}. The other property can be established in a similar way by use of \eqref{eq:x_2}.
\end{proof}


%

\end{document}